\documentclass[12pt]{article}
%packages
\usepackage{mathrsfs}
\usepackage{amsmath}
\usepackage{float}
\usepackage{cite}
\usepackage{esint}
\usepackage[all]{xy}
\usepackage{amssymb}
\usepackage{amsfonts}
\usepackage{amsthm}
\usepackage{color}
\usepackage{graphicx}
\usepackage{hyperref}
\usepackage{bm}
\usepackage{indentfirst}
\usepackage{geometry}
\geometry{a4paper,scale=0.7}
% ==================*
\theoremstyle{plain}%plain,definition,remark. need amsthm package
\newtheorem{thm}{Theorem}[section]

\newtheorem{defn}[thm]{Definition}
\newtheorem{prop}[thm]{Proposition}

\newtheorem{cor}[thm]{Corollary}
\newtheorem{lem}[thm]{Lemma}
\newtheorem{cla}[thm]{Claim}
\newtheorem{them}[thm]{Theorem}
\theoremstyle{remark}
\newtheorem{ex}[thm]{Example}
\newtheorem{rmk}[thm]{Remark}

\newcommand{\Z}{\mathbb{Z}}

\newcommand{\R}{\mathbb{R}}

\newcommand{\C}{\mathbb{C}}

\newcommand{\CF}{\mathrm{CF}}
\newcommand{\im}{\mathrm{Im}}

\makeatletter
\makeatother

\numberwithin{equation}{section}

\title{Construction of holomorphic quilts in Cartesian product of closed surfaces}
\author{Zuyi Zhang}
\date{ }

\begin{document}

\maketitle

\begin{abstract}
    In this article, we modify the proof of holomorphic quilts from Wehrheim and Woodward in \cite{wehrheim2009floer} to construct a specific type of immersed holomorphic quilt, where the symplectic manifolds are closed surfaces. The application is to compare Lagrangian Floer theory with quilted Lagrangian Floer theory, as they relate through Lagrangian correspondence. A potential example is provided to support Bottman and Wehrheim's conjecture \cite{bottman2018gromov} regarding the isomorphism between Lagrangian Floer homology and quilted Lagrangian Floer homology after twisting by bounding cochains.
\end{abstract}

\section{Introduction}
In this article we alter the proof of holomorphic quilts in Wehrheim and Woodward in \cite{wehrheim2009floer} to construct a certain kind of immersed holomorphic quilts where the symplectic manifolds are closed surfaces.

To be precise, let $(F_1.\omega_1)$ and $(F_2,\omega_2)$ be two closed surfaces equipped with symplectic structures. Suppose $L_i\looparrowright F_i$ and $F\looparrowright (F_1\times F_2,\omega_1\times(-\omega_2))$ are Lagrangian immersions, $i=1,2$. Then the Lagrangian composition $F\circ L_2$ (Definition \ref{def:lacom}) is a Lagrangian immersion in $F_1$ if $F$ and $L_2$ are composable (Definition \ref{def:composable}). Moreover, there is a canonical correspondence between the intersections of $L_1$ with $F\circ L_2$ and $L_1\times L_2$ with $F$ (Proposition \ref{prop:sg}). The main theorem of this paper is the following:

\begin{them}
    Let $(F_1.\omega_1)$ and $(F_2,\omega_2)$ be two symplectic closed surfaces equipped with compatible almost complex structures. Suppose $L_i\looparrowright F_i$ $\mathrm{(}i=1,2\mathrm{)}$ and $F\looparrowright (F_1\times F_2,\omega_1\times(-\omega_2))$ are Lagrangian immersions such that $L_1\times L_2$ intersects $F$ transversely in $F_1\times F_2$. Further more, $F$ and $L_2$ are composable. Assume that
    \[
    u(x,y):\R\times[0,1]\rightarrow F_1
    \]
    is a holomorphic map such that     
    \begin{itemize}
        \item $\lim_{x\rightarrow\pm\infty}u(x,y)=x_{\pm}$, where $x_\pm$ are two points in the intersection of $L_1$ and $F\circ L_2$,
        \item $u$ has its boundary in $L_1$ and $F\circ L_2$ $($Definition \ref{def:lagbd}$)$,
        \item the image of $u$ is contained in the image of $F\looparrowright F_1\times F_2$ after projecting to the first factor.
    \end{itemize} 
    Then there is a holomorphic map $\tilde u(x,y):\R\times[0,1]\rightarrow F_1\times F_2$ such that
    \begin{itemize}
        \item $\lim_{x\rightarrow\pm\infty}\tilde u(x,y)=x_{\pm}$, where $\tilde x_\pm$ are two points in the intersection of $L_1$ and $F\circ L_2$ corresponding to $x_\pm$,
        \item $\tilde u$ has its boundary in $L_1\times L_2$ and $F$.
    \end{itemize}
\end{them}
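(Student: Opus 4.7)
The plan is to adapt the strip-shrinking and gluing argument of Wehrheim--Woodward \cite{wehrheim2009floer} to the present 2-dimensional setting. I would first construct a smooth approximate solution $\tilde u_{\mathrm{app}}:\R\times[0,1]\to F_1\times F_2$ satisfying the required Lagrangian boundary conditions, and then perturb it to a genuine holomorphic map by an implicit function theorem argument. For the pre-gluing, the hypothesis that the image of $u$ lies in $\pi_1(F)$ is crucial: combined with composability of $F$ and $L_2$, it allows one to lift $u$ to a smooth map $\bar u:\R\times[0,1]\to F\subset F_1\times F_2$ whose first component is $u$ and whose second component $v:=\pi_2\circ\bar u$ automatically satisfies $v(x,1)\in L_2$, because the $F$-preimage of any point in $F\circ L_2$ lies in $F\cap(F_1\times L_2)$. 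Near $y=0$, where $u(x,0)\in L_1$ but typically $u(x,0)\notin F\circ L_2$, I would use a smooth cut-off $\chi(y)$ equal to $1$ near $y=1$ and $0$ near $y=0$ to interpolate the second component between $v(x,y)$ (near $y=1$) and a smooth map into $L_2$ (near $y=0$), while keeping the first component equal to $u$. The resulting $\tilde u_{\mathrm{app}}$ is smooth, satisfies $\tilde u_{\mathrm{app}}(x,0)\in L_1\times L_2$ and $\tilde u_{\mathrm{app}}(x,1)\in F$, and converges exponentially to the lifted intersection points $\tilde x_\pm$ (produced via Proposition~\ref{prop:sg}) as $x\to\pm\infty$.

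At $\tilde u_{\mathrm{app}}$, I would set up the linearized Cauchy--Riemann operator $D$ on sections of $\tilde u_{\mathrm{app}}^\ast T(F_1\times F_2)$ subject to Lagrangian boundary conditions along $T(L_1\times L_2)$ and $TF$, in weighted Sobolev spaces encoding exponential decay at $\pm\infty$. Transversality of $L_1\times L_2$ with $F$ makes $\tilde x_\pm$ transverse intersections, so $D$ is Fredholm; the bijection of Proposition~\ref{prop:sg} should align its Fredholm index with that of the linearized operator at $u$, and a standard argument (perturbing $J$ if necessary) yields a bounded right inverse of $D$. Since the $\bar{\partial}$-error of $\tilde u_{\mathrm{app}}$ can be made small by sharpening the interpolation region near $y=0$, a Newton iteration or contraction mapping argument then produces a holomorphic $\tilde u$ near $\tilde u_{\mathrm{app}}$ with the required Lagrangian boundary conditions and asymptotic limits $\tilde x_\pm$.

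The main obstacle is the quantitative interplay, familiar from the Wehrheim--Woodward argument, between the $\bar{\partial}$-error of $\tilde u_{\mathrm{app}}$ (arising from the cut-off interpolation near $y=0$) and the norm of the right inverse of $D$: both depend on the geometry of $F$ near the boundary of $u$, and the argument must ensure the former is dominated by the latter so that the contraction estimate closes. In the 2-dimensional setting with an immersed $F$, a further technical point is to ensure that the lift $\bar u$ is globally well-defined and smooth on the entire strip; this requires combining the hypothesis on the image of $u$ with composability to rule out branch points of $\pi_1|_F$ along the image of $u$.
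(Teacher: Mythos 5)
Your overall scheme (pre-gluing an approximate solution and then applying a quantitative implicit function theorem) is the same broad strategy as the paper's, but the mechanism you rely on for smallness of the $\bar\partial$-error fails, and this is a genuine gap rather than a technicality. Your approximate solution keeps the first component equal to $u$ and uses a cut-off in $y$ to interpolate the second component between $v=g_2\circ\bar u$ and some map into $L_2$. The size of the resulting $\bar\partial$-error is controlled from below by the distance from $v(x,y)$, for $y$ near the $L_1$-boundary, to the image of $L_2$, and this distance is of order one for $x$ in a bounded range: there is no reason for $g_2$ of the lift of $u$ to be near $L_2$ along that edge, except asymptotically as $x\to\pm\infty$. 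Sharpening the interpolation region to width $\epsilon$ does not help: the error is of size $\epsilon^{-1}d(v,L_2)$ on a set of measure comparable to $\epsilon$, so its $L^2$-norm grows like $\epsilon^{-1/2}$ and even its $L^1$-norm stays bounded away from zero; in no norm relevant to the Newton iteration does it become small. Your construction therefore has no small parameter at all, the contraction step cannot close, and there is in fact no reason the genuine solution should be $C^0$-close to your ansatz. (A secondary issue: your claim that the $F$-preimage of a point of $F\circ L_2$ automatically lies in $F\cap(F_1\times L_2)$ is false for immersions; one must use the specific boundary lift provided by the condition that $u$ has its boundary in $F\circ L_2$ and check compatibility with the global lift $\bar u$.)

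The paper manufactures the missing small parameter differently. Instead of modifying the map on the fixed-width strip, it works in the enlarged space $\tilde{\mathcal{X}}^Q_\delta$ of quilted strips in which the new components $(u_2',u_1,u_1')$ live on auxiliary strips of width $\delta$ and are taken to be the \emph{constant extension in $t$} of the boundary value of $u_2$ and of its lift to $(L_1\times F)^T$. The $\bar\partial$-error of this approximate solution is supported on the thin strips and satisfies $\|\mathcal{F}_u(0)\|_{\Omega_{1,\delta}}\le C\delta^{1/4}$, so it tends to zero as $\delta\to0$; the substantial analytic work, for which your proposal has no counterpart, is showing that the linearized operator $D^\delta$ has a right inverse bounded \emph{uniformly in $\delta$} (the estimates involving the projections $\pi_2$, $\pi_{211}^\perp$ and the $\sqrt{\delta}$-terms), together with the correction maps $Q_s$, $Q^2_{s,t}$ that make the only locally defined exponential maps respect the immersed Lagrangian boundary conditions. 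Only then does the McDuff--Salamon quantitative implicit function theorem apply, and the exact solution is converted into a holomorphic strip in $F_1\times F_2$ with boundary in $L_1\times L_2$ and $F$ via the conformal identification of $\tilde{\mathcal{X}}^Q_{\bar\delta}$ with $\mathcal{X}^Q_\delta\cong\mathcal{X}^Q$. To salvage your route you would need to introduce an analogous degeneration parameter and prove uniform invertibility with respect to it; the cut-off interpolation by itself cannot produce the required smallness.
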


The aim of the project is to find a combinatorial way to count immersed holomorphic quilts in the case that the symplectic manifolds are closed surfaces. This is a generalization of counting holomorphic strips on closed surfaces. In fact, one can calculate the number of holomorphic strips connecting two intersections of two curves on the surface combinatorially according to Riemann mapping theorem. Therefore on closed surfaces, the calculation of boundary maps in Lagrangian Floer theory is combinatorial. More details can be found in Abouzaid \cite{abouzaid2008fukaya}. On the contrary, the boundary maps of Lagrangian Floer theory in higher dimension is very hard to compute. By Wehrheim and Woodword \cite{wehrheim2010functoriality}, there is a correspondence between holomorphic quilts on a sequence of symplectic manifolds and holomorphic strips on the Cartesian product of all manifold in the sequence. So this project provides a possible way to calculate Lagrangian Floer theory on the Cartesian product of closed surfaces.

It is natural to ask the relation of Lagrangian Floer theory and quilted Lagrangian Floer theory if they are related using a Lagrangian correspondence. Wehrheim and Woodward \cite{wehrheim2010quilted} showed that they are isomorphic with additional assumptions where all the Lagrangians are embedded. In \cite{bottman2018gromov} Bottman and Wehrheim conjectured that by adding the bounding cochains corresponding to the figure-eight bubblings (c.f. \cite{bottman2018gromov}) to twist the boundary maps of the immersed Lagrangian Floer chain group and the immersed quilted Lagrangian Floer chain group, then these two chain groups become chain complexes and are isomorphic. One of the key point to prove the conjecture is to understand how the holomorphic strips and holomorphic quilts are related. This paper gives a partial answer to that.

Another related question is about the Atiyah-Floer conjecture. This conjecture relates the Lagrangian Floer theory and the instanton Floer theory. For the Lagrangain Floer theory part, the symplectic manifold (possibly with singularities) is given by the traceless $\mathrm{SU}(2)$ representations of the fundamental group of a closed surface $F$ modulo conjugations. In fact, Goldman \cite{goldman1984symplectic} proved that this space can be equipped with a symplectic structure. The Lagrangians are the restriction of the the traceless $\mathrm{SU}(2)$ representations of the fundamental group modulo conjugations of a 3-manifold to the boundary, where the boundary of this 3-manifold is $F$. The Lagrangians here are usually immersions into the traceless $\mathrm{SU}(2)$ representations of the fundamental group modulo conjugations of $F$. Examples can be found in Cazassus, Herald, Kirk, and Kotelskiy \cite{cazassus2020correspondence} as well as Herald and Kirk \cite{herald2024endomorphism}. These two papers also explain how holmorophic quilts are related to the Atiyah-Floer conjecture in a more detailed way. At the end of \cite{cazassus2020correspondence}, an example is given to support Bottman and Wehrheim's conjecture \cite{bottman2018gromov}.

The paper is organized as follows. In Section \ref{sec:2}, the necessary background of symplectic manifolds and the definition of holomorhpic quilts are given. Section \ref{sec:3} proves for generic almost complex structures, the moduli space of immsered holomorphic strips with expected dimension 0 is finitely many points. Combining this result with the correspondence between the holomorphic strips and holomorphic quilts, one gets that the moduli space of immsered holomorphic quilts with expected dimension 0 is finitely many points. In Section \ref{sec:4}, we first construct a Hamiltonian perturbation to make all Lagrangians intersect transversely. In Section \ref{sec:6}, the main theorem is proved using the method originated from Wehrheim and Woodward \cite{wehrheim2009floer}. Section \ref{sec:5} provides a potential example that support Bottman and Wehrheim's conjecture in \cite{bottman2018gromov}.\\

\noindent\textbf{Acknowledgement:} The author would like to thank Paul Kirk, Yin Li, Chris Woodward, and Nate Bottman for helpful discussions, as well as Zhangkai Huang for suggestions on improving the paper.

\section{Preliminary}\label{sec:2}
This section begins with some basic facts of symplectic topology. After that, we give the definition of Lagrangian Floer chain groups and the boundary maps. The last part is about the Lagrangian composition and quilted Lagrangian Floer theory.\\

\begin{defn}
Let $X$ be a smooth manifold with $2n$ dimension. A 2-form $\omega$ is called a {\bf symplectic form} if the following conditions hold:
\begin{itemize}
    \item $\omega$ is a closed form.
    \item The restriction of $\omega$ to each tangent space of $X$ is skew-symmetric and non-degenerate.
\end{itemize}
The pair $(X,\omega)$ is used to represent a symplectic manifold $X$ with the symplectic form $\omega$. When no problem is caused, we just say that $X$ is a symplectic manifold without introducing its symplectic form.
\end{defn}

\begin{ex}
The real vector space $\R^{2n}$ is a symplectic manifold with its symplectic form defined as 
\[
\omega_{std}=\sum_{i=1}^ndx_i\wedge dy_i,
\]
where the coordinate of $\R^{2n}$ is given by $(x_1,\ldots,x_n,y_1\ldots,y_n)$.
\end{ex}

\begin{ex}
Any closed surface $F$ admits a symplectic structure. The volume form for any Riemannian metric is a sympletic form.
\end{ex}

\begin{ex}
Suppose that $(X_1,\omega_1)$ and $(X_2,\omega_2)$ are two symplectic manifolds. Then $(X_1\times X_2, w_1\times(-\omega_2))$ is a symplectic manifold.
\end{ex}

\begin{ex}
The cotangent bundle $T^*X$ of any smooth manifold $X$ is a symplectic manifold. 
\end{ex}

\begin{defn}
Let $(X_1,\omega_1)$ and $(X_2,\omega_2)$ be two symmplectic manifolds. Suppose $f:X_1\rightarrow X_2$
is a smooth map.
The function $f$ is a {\bf (local) symplectomorphism} if $f$ is a $($local$)$ diffeomophism and
\[
f^*\omega_2=\omega_1.
\]
In this case, $X_1$ and $X_2$ are {\bf (local) symplectomorphic}.
\end{defn}

\begin{defn}
Let $X$ and $Y$ be smooth manifolds. The triple $(X,f;Y)$ is defined as a {\bf immersed submanifold} of $Y$ with an immersion map
\[
f:X\rightarrow Y,
\]
if $f$ is smooth and its tangent map $df_x$ is injective for each point $x\in X$. For simplicity, denote by $X\looparrowright Y$ an immersion when the immersion map is not specified.
\end{defn}

\begin{defn}
Suppose that $(L,l;X)$ is an immersion from an $n$ dimensional manifold $L$ to a 2n dimensional symplectic manifold $(X,\omega)$. The immersion $(L,l;X)$ is called  a {\bf Lagrangian immersion} if
\[
l^*\omega=0.
\]
\end{defn}

\begin{ex}\label{exp:2.1}
Any immersed curve $C$ in a symplectic surface $(F,\omega)$ is Lagrangian. Since $\omega$ is skew-symmetric, then $\omega(v_x,v_x)=0$ for all $x\in C$ and $v_x\in T_xC$.
\end{ex}

\begin{defn}\label{def:lagint}
Let $(L_1,l_1;X)$ and $(L_2,l_2;X)$ be two Lagrangian immersions into a symplectic manifold $X$. The {\bf fiber product} $L_1\times_X L_2$ is defined as 
\[
L_1\times_X L_2=\{(x_1,x_2)\in L_1\times L_2|\ l_1(x_1)=l_2(x_2)\}.
\]
\end{defn}

\begin{rmk}
The terminology {\bf fiber product} comes from category theory. The intersections of immersed manifolds $(L_1,l_1;X)$ and $(L_2,l_2;X)$ in Definition \ref{def:lagint} cannot be defined simply as $l_1(L_1)\cap l_2(L_2)$. Since $(L_1,l_1;X)$ and $(L_2,l_2;X)$ are immersions, $l_i^{-1}(x)$ may contain more than one element for $x\in l_1(L_1)\cap l_2(L_2)$ for $i=1,2$. It is necessary to distinguish these preimages. So we use fiber products instead of intersections here.
\end{rmk}

The following two theorems are fundamental in symplectic topology .
\begin{them}[Darboux Theorem]{\rm\cite{mcduff2017introduction}}
Let $(X^{2n},\omega)$ be a symplectic manifold. For any point $x\in X$, there is an open neighbourhood $U\subset X$ of $x$ such that $(U,\omega|_U)$ is symplectomorphic to $(\R^{2n},\omega_{std})$.
\end{them}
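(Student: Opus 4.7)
The plan is to combine a pointwise linear normalization of $\omega$ at $x$ with the Moser deformation trick. Working in a smooth coordinate chart around $x$, I may identify a neighbourhood of $x$ with an open set $V\subset\R^{2n}$ sending $x$ to the origin, and regard $\omega$ as a closed non-degenerate $2$-form on $V$ which I wish to compare with $\omega_{std}$.

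First I would normalize $\omega$ at the origin by linear algebra. Since $\omega_0$ is a non-degenerate skew-symmetric bilinear form on $T_0\R^{2n}\cong\R^{2n}$, a symplectic Gram--Schmidt procedure produces a linear isomorphism of $\R^{2n}$ carrying $\omega_0$ to $(\omega_{std})_0$. After replacing $\omega$ by its pullback under this isomorphism I may assume $\omega$ and $\omega_{std}$ agree at $0$.

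Next I would apply the Moser trick to the interpolating family $\omega_t:=(1-t)\omega_{std}+t\omega$ for $t\in[0,1]$. Each $\omega_t$ is closed, and at the origin $\omega_t|_0=(\omega_{std})_0$ is non-degenerate, so by openness of non-degeneracy there is a neighbourhood of $0$ on which $\omega_t$ is non-degenerate for every $t\in[0,1]$. Since $\omega-\omega_{std}$ is closed, the Poincar\'e lemma on a star-shaped subneighbourhood yields a primitive $\alpha$ with $d\alpha=\omega-\omega_{std}$, and subtracting a suitable constant $1$-form lets me arrange $\alpha|_0=0$. Define the time-dependent vector field $X_t$ by $\iota_{X_t}\omega_t=-\alpha$; this is well defined because $\omega_t$ is non-degenerate, and $X_t|_0=0$ because $\alpha|_0=0$. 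Since $X_t$ vanishes at the origin, a standard ODE argument provides a smaller neighbourhood of $0$ on which the flow $\phi_t$ of $X_t$ exists for all $t\in[0,1]$ and fixes the origin.

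The key computation is then
\[
\frac{d}{dt}\phi_t^*\omega_t=\phi_t^*\Bigl(L_{X_t}\omega_t+\tfrac{d}{dt}\omega_t\Bigr)=\phi_t^*\bigl(d\iota_{X_t}\omega_t+(\omega-\omega_{std})\bigr)=\phi_t^*(-d\alpha+d\alpha)=0,
\]
using Cartan's formula together with $d\omega_t=0$. Integrating from $0$ to $1$ gives $\phi_1^*\omega=\omega_{std}$, so the composition of $\phi_1$ with the original chart is a symplectomorphism between a neighbourhood of $0$ in $(\R^{2n},\omega_{std})$ and a neighbourhood of $x$ in $(X,\omega)$, yielding the claim after shrinking $U$ appropriately. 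The only delicate point, and hence the main obstacle, is guaranteeing that the flow $\phi_t$ is defined on a common neighbourhood of $0$ for all $t\in[0,1]$; this is precisely what the condition $X_t|_0=0$ buys, since it prevents trajectories starting sufficiently close to $0$ from escaping in finite time.
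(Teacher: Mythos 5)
Your Moser-trick argument is correct and is essentially the standard proof of Darboux's theorem found in the cited reference (the paper itself only states the result with a citation to McDuff--Salamon and gives no proof): the pointwise linear normalization, the interpolation $\omega_t=(1-t)\omega_{std}+t\omega$, the primitive $\alpha$ with $\alpha|_0=0$, and the vanishing of $X_t$ at the origin to guarantee the flow exists up to time $1$ are exactly the standard steps, and the key computation is carried out correctly. The only caveat is one of formulation rather than a gap in your argument: what the construction (or any construction) yields is a symplectomorphism from a neighbourhood of $x$ onto an open subset of $(\R^{2n},\omega_{std})$, not onto all of $\R^{2n}$ as the statement literally reads, which is the standard and intended form of the theorem.
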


\begin{them}[Weinstein Tubular Neighbourhood Theorem]\label{thm:tubu}{\rm \cite{eliashberg2002introduction}}
Let $(X,\omega)$ be a symplectic manifold. Assume that $f:L\rightarrow X$ is a Lagrangian immersion. Then there is a local symplectomorphism $G$ from a tubular neighbourhood $T^*_\varepsilon L$ of the zero section of the cotangent bundle $T^*L$ to a neighbourhood $U\supset f(L)$ as in the following commutative diagram:
\begin{equation*}
    \xymatrix{
    T_\varepsilon^*L \ar[dr]^G & \\
    L \ar[r]^{f\ \ \ } \ar[u] &U\subset X}.
\end{equation*}
\end{them}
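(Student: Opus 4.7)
The plan is to follow the strip-shrinking / implicit function theorem strategy of Wehrheim--Woodward \cite{wehrheim2009floer}, adapted to the 2-dimensional setting. Since the compatible almost complex structure on $(F_1 \times F_2, \omega_1 \oplus (-\omega_2))$ is $J := J_1 \oplus (-J_2)$, a map $\tilde u = (u_1, u_2)$ is $J$-holomorphic if and only if $u_1$ is $J_1$-holomorphic and $u_2$ is $(-J_2)$-holomorphic. The goal is to produce such a $\tilde u$ with $u_1$ close to the given $u$, $u_2(x,0) \in L_2$, $(u_1(x,1), u_2(x,1)) \in F$, and asymptotic limits $\tilde x_\pm$.

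First, I would build an approximate solution from $u$. The third hypothesis combined with the composability of $F$ with $L_2$ forces the branch of $F$ through each $\tilde x_\pm = (x_\pm, q_\pm)$ to be a local graph over $F_1$: there is a smooth section $\psi : U \subset F_1 \to F_2$ with $(\mathrm{id}, \psi)(U) \subset F$ and $\psi^{-1}(L_2) \cap U = (F \circ L_2) \cap U$. Set $v_0(x, 1) := \psi(u(x, 1))$; by the second hypothesis this trace lies in $L_2$ and converges to $q_\pm$ as $x \to \pm\infty$. Then extend $v_0$ into the interior of the strip, using cut-off functions, so that $v_0(x, 0) \in L_2$ everywhere and $v_0 \to q_\pm$ at the ends. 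The pair $\tilde u_0 := (u, v_0)$ has the correct boundary behavior and the correct asymptotic limits, though it is generally not $J$-holomorphic.

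Next, I would apply the Weinstein tubular neighbourhood theorem (Theorem \ref{thm:tubu}) to $F \looparrowright F_1 \times F_2$ to work in coordinates where $F$ becomes the zero section of $T^*F$. Under this identification, the boundary condition at $y = 1$ turns into the standard Lagrangian condition on the zero section, and $L_1 \times L_2$ meets the zero section transversely at $\tilde x_\pm$ by hypothesis. The $\bar\partial_J$ equation for maps near $\tilde u_0$ becomes a perturbed Cauchy--Riemann equation on sections of $T^*F$, with Lagrangian boundary conditions at both $y = 0$ and $y = 1$. Its linearization at $\tilde u_0$ is a Fredholm Cauchy--Riemann operator between weighted Sobolev spaces whose weights encode the exponential convergence required at $\tilde x_\pm$. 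Combining a uniform right-inverse bound for this linearization with a smallness estimate on the ansatz error $\bar\partial_J \tilde u_0$, a Newton / implicit function theorem argument then deforms $\tilde u_0$ to a genuine $J$-holomorphic strip $\tilde u$ satisfying the required boundary conditions and limits.

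The hard part is the analytic step: ensuring that the error $\bar\partial_J \tilde u_0$ is small enough, in the relevant weighted norm, for the Newton iteration to converge. Because $\psi$ is area-preserving but not $(J_1, -J_2)$-holomorphic in general, the ansatz $\tilde u_0$ really does fail to be $J$-holomorphic, and one must quantify this failure and control it against the operator's inverse bound. The transversality assumption $L_1 \times L_2 \pitchfork F$ at $\tilde x_\pm$ supplies the exponential decay underpinning the Fredholm framework, and composability of $F$ with $L_2$ is what guarantees existence of the local section $\psi$ near the intersection points; without either input the ansatz $\tilde u_0$ could not be built or the implicit function theorem could not be applied.
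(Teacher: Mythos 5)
Your proposal does not prove the statement it was assigned. The statement is the Weinstein tubular neighbourhood theorem for a Lagrangian immersion $f:L\rightarrow X$: the existence of a local symplectomorphism $G$ from a neighbourhood $T^*_\varepsilon L$ of the zero section of $T^*L$ onto a neighbourhood of $f(L)$ extending $f$. What you have written is instead a sketch of the paper's \emph{main} theorem (the lifting of a holomorphic strip in $F_1$ to a holomorphic quilt in $F_1\times F_2$), and in the second step you explicitly \emph{invoke} Theorem \ref{thm:tubu} to put $F$ into normal form --- so as an argument for Theorem \ref{thm:tubu} itself it is circular, and as written it establishes nothing about that statement. Note also that the paper does not prove this theorem; it is quoted from the literature, the only point specific to the immersed setting being that $G$ is merely a local symplectomorphism and need not be injective. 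A proof would proceed along classical lines: choose a compatible metric, identify the normal bundle of the immersion with $T^*L$ via $v\mapsto \omega(v,\cdot)|_{TL}$, exponentiate to get a local diffeomorphism near the zero section (working on small pieces where $f$ embeds, since uniqueness of the construction lets the pieces agree), and then run the Moser/Weinstein deformation argument to correct the pulled-back form to the canonical one on $T^*_\varepsilon L$.

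Even read as an attempt at the main theorem, your route has a genuine gap precisely where you flag it: there is no small parameter making the ansatz error $\bar\partial_J\tilde u_0$ small, so the Newton/implicit function theorem step has nothing to bite on --- the section $\psi$ and the cut-off extension $v_0$ produce an error of size $O(1)$ in general. The paper avoids this by following Wehrheim--Woodward: it extends the given strip \emph{constantly} over an auxiliary strip of width $\delta$, obtaining $\|\mathcal{F}_u(0)\|_{\Omega_{1,\delta}}\le C\delta^{1/4}$, constructs boundary-preserving corrections $Q_s$, $Q^2_{s,t}$ to the exponential map so that the Lagrangian conditions for $(L_1\times F)^T$, the diagonals, and $L_2$ are respected, and proves estimates for $D^\delta$ and $(D^\delta)^*$ that are uniform in $\delta$, so that the quantitative implicit function theorem (Theorem \ref{thm:mcdu}) applies once $\delta$ is small. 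Without an analogue of that small parameter, your final step does not go through.
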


\begin{rmk}
The local symplectic diffeomorphism $G$ need not to be an injective map.
\end{rmk}

\begin{defn}
Let $(X,\omega)$ be a symplectic manifold. An {\bf almost complex structure} is a bundle map $J:TX\rightarrow TX$ such that 
\[
J^2=-id.
\]
An almost complex structure $J$ is called {\bf $\mathbf\omega$-compatible} if
\begin{itemize}
    \item $\omega(\cdot,J\cdot)$ is a Riemannian metric,
    \item $\omega(J\cdot,J\cdot)=\omega(\cdot,\cdot)$.
\end{itemize}

\end{defn}

\begin{them}{\rm \cite{mcduff2017introduction}}
Let $(X,\omega)$ be a symplectic manifold. Then the space of $\omega$-compatible almost complex structures in nonempty and contractible.
\end{them}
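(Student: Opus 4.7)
The plan is to produce $\omega$-compatible almost complex structures from an arbitrary Riemannian metric via a fiberwise polar decomposition, and then to exhibit the full space of such structures as a retract of the convex space of Riemannian metrics, hence contractible.

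For nonemptiness, I first pick any Riemannian metric $g$ on $X$, which exists by a partition of unity on the paracompact manifold $X$. Since $\omega$ is nondegenerate, there is a unique bundle endomorphism $A\colon TX\to TX$ characterized by $\omega(u,v)=g(Au,v)$, and skew-symmetry of $\omega$ translates to $A^{\ast}=-A$ with respect to $g$. Hence $-A^{2}=A^{\ast}A$ is $g$-symmetric and positive definite, and its positive square root $P:=\sqrt{-A^{2}}$ is a smooth, symmetric, positive-definite endomorphism that commutes with $A$ (since $A$ commutes with $-A^{2}$). Setting $J:=P^{-1}A$ gives a polar decomposition $A=PJ$. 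Using $PA=AP$ and $P^{2}=-A^{2}$, a short computation verifies $J^{2}=-\mathrm{id}$, that $J$ is $g$-orthogonal, that $\omega(\cdot,J\cdot)=g(P\,\cdot,\cdot)$ is a Riemannian metric, and that $\omega(J\cdot,J\cdot)=\omega(\cdot,\cdot)$. Thus $J$ is $\omega$-compatible, so the space is nonempty.

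For contractibility, I regard $g\mapsto J(g)$ as a continuous map from the space $\mathrm{Met}(X)$ of Riemannian metrics to the space $\mathcal{J}(X,\omega)$ of $\omega$-compatible almost complex structures, and pair it with the map $J\mapsto g_{J}:=\omega(\cdot,J\cdot)$ in the reverse direction. The key identity is $J(g_{J_{0}})=J_{0}$: if $g$ is itself compatible with $J_{0}$, then $g(u,v)=\omega(u,J_{0}v)=g(Au,J_{0}v)=-g(u,AJ_{0}v)$, forcing $AJ_{0}=-\mathrm{id}$ and hence $A=J_{0}$, so $P=\mathrm{id}$ and $J=J_{0}$. Thus $\mathcal{J}(X,\omega)$ is a retract of $\mathrm{Met}(X)$. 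Since $\mathrm{Met}(X)$ is a convex open cone in the Fréchet space of smooth symmetric $(0,2)$-tensors on $X$, it is contractible, and so is its retract $\mathcal{J}(X,\omega)$.

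The main analytic point that must be checked carefully is smoothness of the assignment $g\mapsto P=\sqrt{-A^{2}}$ in the appropriate Fréchet topology. This reduces to real-analytic dependence of the positive square root of a symmetric positive-definite operator on its entries, which follows from the functional calculus; globalizing fiberwise yields smooth dependence of $J(g)$ on $g$. Once this is in place, both the construction $g\mapsto J(g)$ and the retraction structure are smooth, and the argument is complete.
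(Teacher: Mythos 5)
Your argument is correct: the fiberwise polar decomposition $A=PJ$ giving nonemptiness, the verification that $J(g_{J_0})=J_0$, and the retraction of $\mathcal{J}(X,\omega)$ off the convex space of metrics is exactly the standard proof. The paper itself offers no proof of this statement; it is quoted as a known result with a citation to McDuff--Salamon, and your write-up is essentially the argument found in that reference (including the one genuine subtlety you flag, the smooth dependence of $\sqrt{-A^2}$ on $g$ via functional calculus), so there is nothing to correct or compare beyond noting that agreement.
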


\begin{defn}
Let $(L_1,l^1;X)$ and $(L_2,l^2;X)$ be two Lagrangian immersions into a symplectic manifold $X$. Let $(x_1,x_2)\in L_1\times_XL_2$ be an intersection point of these two Lagrangian immersions. We say {\bf $\mathbf{L_1}$ intersects $\mathbf{L_2}$ transversely at $\mathbf{(x_1,x_2)}$} if 
\[
\mathrm{Im}(df^1)_{x_1}\cap\mathrm{Im}(df^2)_{x_2}=\{0\}.
\]
If the above equation holds for every intersection point, then {\bf $\mathbf{L_1}$ intersects $\mathbf{L_2}$ transversely}.
\end{defn}

\begin{defn}
Let $X$ be a symplectic manifold with compatible almost complex structure $J$. The complex structure $j$ over the strip $\mathbb R\times [0,1]$ is induced from the standard complex structure over $\C$. Then $u:\mathbb R\times [0,1]\rightarrow X$ is called a {\bf J-holomophic map} if
\[
\bar\partial_Ju:=\frac{1}{2}(du+J\circ du\circ j)=0.
\]
\end{defn}

\begin{defn}\label{def:lagbd}
Let $u$ be a smooth map from the strip $\mathbb R\times [0,1]$ to a symplectic manifold $X$ with a compatible almost complex structure. Assume $L_i\looparrowright X$ are 2 immersed Lagrangian submanifolds, for $i=1,2$. We say that {\bf $\mathbf u$ has its boundary in $\mathbf L_i$} if there are lifts $\tilde u_i$ such that the following diagram commutes:
\begin{equation*}
    \xymatrix{
    &L_i\ar[d]\\
    \mathbb R\times \{i-1\}\ar[r]^{\ \ \ u}\ar@{-->}[ur]^{\tilde u_i}&X,
    }
\end{equation*}
for $i=1,2$.
\end{defn}

\begin{defn}
    Let $(L_1,l^1;X)$ and $(L_2,l^2;X)$ be two Lagrangian immersions into a symplectic manifold $X$ such that $L_1$ intersects $L_2$ transversely. Let $J$ be a $\omega$-compatible almost complex structure on $X$. A \textbf{holomorphic bigon connecting two elements} $\mathbf{x_\pm}$ is a $J$-homorphic map $u:\R\times[0,1]\rightarrow X$ satisfying the following conditions:
    \begin{itemize}
        \item $u$ has its boundary in $L_i$, $i=1,2$,
        \item $\lim_{s\rightarrow\pm\infty}u(s,\cdot)=x_\pm$,
        \item the Fredholm index of the Cauchy Riemann operator is 1.
    \end{itemize}
\end{defn}

%The construction of the Lagrangian Floer complex depends on a choice of $\omega$-compatible almost complex structures. The following introduces the definition of the generators of the Lagrangian Floer chain group.

\begin{defn}
Let $L_1\looparrowright X$ and $L_2\looparrowright X$ be two Lagrangian immersions into a symplectic manifold $X$. Assume that $L_1$ and $L_2$ intersect transversely. The {\bf Lagrangian Floer chain group} $\CF(L_1,L_2;X)$ is defined as the $\Z_2$ vector space generated by points in $L_1\times_XL_2$.
\end{defn}

Before presenting the definition of the boundary maps of Lagrangian Floer chain groups, we review some facts about $J$-holomorphic curves.

Let $X$ be a symplectic manifold with compatible almost complex structure $J$. Let $L_1\looparrowright X$ and $L_2\looparrowright X$ be two Lagrangian immersions. For two intersection points $x_+,x_-\in L_1\times_X L_2$, denote $\mathcal{X}(x_+,x_-)$ as the set of $J$-holomorphic maps $u:\mathbb R\times [0,1]\rightarrow X$ connecting $x_\pm$ with the following properties:
\begin{itemize}
    \item $\bar\partial_Ju=0$,
    \item $\int u^*\omega<+\infty$,
    \item $u$ has its boundary in $L_i$, $i=1,2$,
    \item $\lim_{t\rightarrow\pm\infty}u(t,x)=x_\pm$.
\end{itemize}

There is a translation $\R$ action on $\mathcal{X}(x_+,x_-)$:
\begin{align*}
    \mathcal{X}(x_+,x_-)\times\R&\rightarrow\mathcal{X}(x_+,x_-)\\
    (u(\cdot,\cdot),t)\ \ &\mapsto u(\cdot+t,\cdot).
\end{align*}
The quotient space of $\mathcal{X}(x_+,x_-)$ under this $\R$ action is the moduli space connecting $x_+$ and $x_-$:
\[
\mathcal{M}(x_+,x_-)=\mathcal{X}(x_+,x_-)/\mathbb R.
\]
Suppose $\dim\mathcal{M}(x_+,x_-)=0$, the moduli space $\mathcal{M}(x_+,x_-)$ is a compact smooth manifold for generic $J$-holomorphic structures on $X$ (Section \ref{sec:3}). So there are finitely many elements in $\mathcal{M}(x_+,x_-)$ if $\dim\mathcal{M}(x_+,x_-)=0$. The 
\textbf{boundary map} $\mu^1$ of the Lagrangian Floer complex $\CF(L_1,L_2;X)$ is defined as 
\begin{align*}
\mu^1:\CF(L_1,L_2;X)&\rightarrow \ \ \ \ \ \ \ \ \ \ \ \ \CF(L_1,L_2;X)\\
x_+\ \ \ \ \ \ &\mapsto\sum_{\substack{x_-\in L_1\cap L_2\\ \dim\mathcal{M}(x_+,x_-)=0}}\#_{Mod\, 2}\mathcal{M}(x_+,x_-)x_-,
\end{align*}
where $\#_{Mod\, 2}\mathcal{M}(x_+,x_-)$ is the number of the elements in $\mathcal{M}(x_+,x_-)$ $Mod$ $2$.\\

\begin{ex}\label{exp:abou}\cite{abouzaid2008fukaya}
Let $X$ be a closed surface. Given two immersed curves $L_1\looparrowright X$ and $L_2\looparrowright X$, assume $x_\pm\in L_1\times_XL_2$. Then $\# \mathcal{M}(x_+,x_-)$ is the number of smooth homotopy equivalence class of orientation preserving immersions $u$:
\begin{itemize}
    \item $u:\R\times[0,1]\rightarrow X$,
    \item $u$ has its boundary in $L_i$, $i=1,2$,
    \item $\lim_{t\rightarrow\pm\infty}u(t,x)=x_\pm$,
    \item $x_\pm$ are convex corners of $\im (u)$.
\end{itemize}
If $L_1$ and $L_2$ are properly immersed oriented smooth curves which are the image of a properly embedded curves in the universal cover of $X$, then the map
\begin{align*}
\mu^1:\CF(L_1,L_2;X)&\rightarrow \ \ \ \ \ \ \ \ \ \ \ \ \CF(L_1,L_2;X)\\
x_+\ \ \ \ \ \ &\mapsto\sum_{\substack{x_-\in L_1\times_XL_2\\ \dim\mathcal{M}(x_+,x_-)=0}}\#_{Mod\, 2}\mathcal{M}(x_+,x_-)x_-
\end{align*}
has the property that 
\[
\mu^1\circ\mu^1=0.
\]
For details, see Abouzaid \cite{abouzaid2008fukaya}.
\end{ex}

Example \ref{exp:abou} shows that when the symplectic manifold is two dimensional, the number of $J$-holomorphic maps connecting two intersection points $x_+$ and $x_+$ of Lagrangians immersions is the number of discs connection these two points $x_\pm$.

The following gives the definitions of Lagrangian correspondences and Lagrangian compositions. The concept of Lagrangian correspondences is needed to define Lagrangian compositions. Lagrangian correspondences are special cases of Lagrangian immersions, as in the definition below. When both terms--Lagrangian correspondence and Lagrangian immersion--apply, we use Lagrangian correspondence to stress compositions of Lagrangian immersions.

\begin{defn}
Given symplectic manifolds $(X_0, \omega_0)$ and $(X_1, \omega_1)$, a {\bf Lagrangian correspondence} from $X_0$ to $X_1$ is a Lagrangian immersion
\[
l=(l_0,l_1):L\rightarrow X_0
\times X_1,
\]
where $X_0\times X_1$ is equipped with the symplectic structure $\omega_0\times(-\omega_1)$.
\end{defn}

\begin{defn}\label{def:lacom}
Let $(X_0, \omega_0)$, $(X_1, \omega_1)$, $(X_2, \omega_2)$ be three symplectic manifolds. Then $(X_0\times X_1,\omega_0\times (-\omega_1))$, $(X_1\times X_2,\omega_1\times (-\omega_2))$, $(X_0\times X_2,\omega_0\times (-\omega_2))$ are symplectic manifolds. Assume that 
\[
l^0=(l_0^0,l^0_1):L_{01}\rightarrow X_0
\times X_1\quad and\quad l^1=(l_1^1,l^1_2):L_{12}\rightarrow X_1
\times X_2
\]
are Lagrangian correspondences. The {\bf Lagrangian composition} $L_{01}\circ L_{12}$ of $L_{01}$ and $L_{12}$ is defined as
\begin{equation}\label{equ:lagc}
    l^0\circ l^1:=(l^0_0,l^1_2):L_{01}\circ L_{12}\rightarrow X_0\times X_2,
\end{equation}
where 
\[
L_{01}\circ L_{12}:=L_{01}\times_{X_1} L_{12}=\{(x,y)\in L_{01}\times L_{12}|l^0_1(x)=l^1_1(y)\}.
\]
\end{defn}

\begin{rmk}\label{rmk:lagcom}
As a matter of fact, 
\[
L_{01}\circ L_{12}=(l^0\times l^1)^{-1}(X_0\times\Delta_{X_1}\times X_2),
\]
where 
\[
\Delta_{X_1}:=\{(x,x)|x\in X_1\}
\]
is the diagonal of $X_1\times X_1$. Therefore, if $l^0\times l^1$ is transverse to $X_0\times\Delta_{X_1}\times X_2$, then $L_{01}\circ L_{12}$ is a smooth manifold according to the transversality theorem.
\end{rmk}

The composition $(L_{01}\circ L_{12},l^0\circ l^1;X_0\times X_1)$ in Definition \ref{def:lacom} is not always a Lagrangian immersion. The following proposition gives a sufficient criterion.

\begin{prop}[\cite{wehrheim2010functoriality}]\label{prop:lagco}
Let $(X_0, \omega_0)$, $(X_1, \omega_1)$ and $(X_2, \omega_2)$ be three symplectic manifolds. Then $(X_0\times X_1,\omega_0\times (-\omega_1))$, $(X_1\times X_2,\omega_1\times (-\omega_2))$ and $(X_0\times X_2,\omega_0\times (-\omega_2))$ are symplectic manifolds. Suppose that 
\[
\Delta_{X_1}=\{(x,x)|x\in X_1\}\subset X_1\times X_1
\]
is the diagonal and 
\[
l^0=(l_0^0,l^0_1):L_{01}\rightarrow X_0
\times X_1,l^1=(l_1^1,l^1_2):L_{12}\rightarrow X_1
\times X_2
\]
are Lagrangian correspondences. If $l^0\times l^1$ is transverse to $X_0\times\Delta_{X_1}\times X_2$, then 
\begin{itemize}
    \item $L_{01}\circ L_{12}$ is a smooth manifold,
    \item $(L_{01}\circ L_{12},l^0\circ l^1;X_0\times X_2)$ is a Lagrangian immersion.
\end{itemize}
\end{prop}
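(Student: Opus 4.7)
The plan is to address the two bullet points in turn. For smoothness, the description in Remark \ref{rmk:lagcom} realizes the fiber product as the preimage
\[
(l^0\times l^1)^{-1}(X_0\times\Delta_{X_1}\times X_2)\subset L_{01}\times L_{12},
\]
so the transversality hypothesis together with the preimage theorem immediately produces a smooth submanifold of the expected Lagrangian dimension $\dim L_{01}+\dim L_{12}-\dim X_1=(\dim X_0+\dim X_2)/2$.

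Next I handle the Lagrangian property. At a point $(x,y)\in L_{01}\circ L_{12}$, writing $p:=l^0_1(x)=l^1_1(y)$, the preimage description yields
\[
T_{(x,y)}(L_{01}\circ L_{12})=\{(v,w)\in T_xL_{01}\oplus T_yL_{12}:dl^0_1(v)=dl^1_1(w)\},
\]
and the differential of $l^0\circ l^1$ is $(v,w)\mapsto(dl^0_0(v),dl^1_2(w))$. To verify that $(l^0\circ l^1)^*(\omega_0\times(-\omega_2))=0$, I combine the Lagrangian identities $(l^0_0)^*\omega_0=(l^0_1)^*\omega_1$ and $(l^1_1)^*\omega_1=(l^1_2)^*\omega_2$ with the matching $dl^0_1(v_i)=dl^1_1(w_i)$, producing the chain
\[
\omega_0(dl^0_0(v_1),dl^0_0(v_2))=\omega_1(dl^0_1(v_1),dl^0_1(v_2))=\omega_1(dl^1_1(w_1),dl^1_1(w_2))=\omega_2(dl^1_2(w_1),dl^1_2(w_2)).
\]

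The main obstacle is showing that $l^0\circ l^1$ is an immersion, since this is where the transversality hypothesis must be fed into the Lagrangian conditions symplectically. I first rewrite transversality, using the natural identification of the normal bundle of $X_0\times\Delta_{X_1}\times X_2$ with $TX_1$, as the linear identity
\[
dl^0_1(T_xL_{01})+dl^1_1(T_yL_{12})=T_pX_1.
\]
Suppose $(v,w)\in T_{(x,y)}(L_{01}\circ L_{12})$ lies in the kernel of $d(l^0\circ l^1)$, so $dl^0_0(v)=0$, $dl^1_2(w)=0$, and $u:=dl^0_1(v)=dl^1_1(w)$. Applying the Lagrangian identity for $L_{01}$ to the pair $(v,v')$ with $dl^0_0(v)=0$ gives $\omega_1(u,dl^0_1(v'))=0$ for every $v'\in T_xL_{01}$, so $u\in dl^0_1(T_xL_{01})^{\omega_1}$, and symmetrically $u\in dl^1_1(T_yL_{12})^{\omega_1}$. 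Taking the $\omega_1$-complement of the transversality identity gives
\[
dl^0_1(T_xL_{01})^{\omega_1}\cap dl^1_1(T_yL_{12})^{\omega_1}=(dl^0_1(T_xL_{01})+dl^1_1(T_yL_{12}))^{\omega_1}=(T_pX_1)^{\omega_1}=\{0\}
\]
by non-degeneracy of $\omega_1$, so $u=0$. Then $dl^0_0(v)=0=dl^0_1(v)$ forces $v=0$ because $l^0$ is an immersion, and symmetrically $w=0$, establishing injectivity of $d(l^0\circ l^1)$ and completing the proof.
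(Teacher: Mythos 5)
Your proof is correct. Note that the paper itself gives no argument for Proposition \ref{prop:lagco}; it simply cites \cite{wehrheim2010functoriality}, so there is no internal proof to compare against. Your argument is the standard one from that reference: transversality plus the preimage theorem for smoothness, the identification of transversality with $dl^0_1(T_xL_{01})+dl^1_1(T_yL_{12})=T_pX_1$, the symplectic-orthogonal computation $\bigl(dl^0_1(T_xL_{01})+dl^1_1(T_yL_{12})\bigr)^{\omega_1}=\{0\}$ to kill the kernel of $d(l^0\circ l^1)$, and the pullback chain $(l^0_0)^*\omega_0=(l^0_1)^*\omega_1$, $(l^1_1)^*\omega_1=(l^1_2)^*\omega_2$ for isotropy, with the dimension count $\tfrac{1}{2}(\dim X_0+\dim X_2)$ completing the Lagrangian property. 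All steps check out, so your proposal in effect supplies the proof the paper delegates to the literature.
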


\begin{defn}\label{def:composable}
Given Lagrangian correspondences $L_{01}$ and $L_{12}$ in $(X_0
\times X_1,\omega_0\times (-\omega_1))$ and $(X_1
\times X_2,\omega_1\times (-\omega_2))$, resp. These two Lagrangian correspondences $L_{01}$ and $L_{12}$ are said to be {\bf composable} if all the assumptions in Proposition \ref{prop:lagco} are satisfied.
\end{defn}

If $L_1\looparrowright X_1$ is a Lagrangian immersion, then $L_1$ can be regarded as a Lagrangian correspondence from $\{pt\}$ to $X_1$. Let $L_{12}\looparrowright X_1\times X_2$ be a Lagrangian correspondence. If $L_1$ and $L_{12}$ are composable, then $L_1\circ L_{12}\looparrowright X_2$ is a Lagrangian immersion. Similarly, if $L_2$ is a Lagrangian immersion into $X_2$, then $L_2$ is a Lagrangian correspondence from $X_2$ to $\{pt\}$. If $L_2$ is composable with $L_{12}$, then $L_{12}\circ L_2\looparrowright X_1$ is a Lagrangian immersion.\\

We end this section by introducing the quilted Lagrangian Floer theory. Let $g_1: F\rightarrow F_1$ and $g_2:F\rightarrow F_2$ be two covering maps between closed surfaces. Denote $\omega_1$, $\omega_2$, $\omega_1\times(-\omega_2)$ as the symplectic forms on $F_1$, $F_2$, $F_1\times F_2$ respectively. Assume that
\[
(g_1,g_2):F\rightarrow F_1\times F_2
\] 
is a Lagrangian correspondence from $F_1$ to $F_2$. Suppose that $L_1\looparrowright F_1$ and $L_2\looparrowright F_2$ are two immersed curves, composable with $F$. Define the {\bf quilted Lagrangian Floer chain group} $\CF^Q(L_1,F,L_2;F_1,F_2)$ as the abelian group generated by elements in $F\times_{F_1\times F_2}(L_1\times L_2)$ over $\mathbb Z_2$.\\
Let $J_1,J_2,J_1\times (-J_2)$ be the almost complex structures compatible with $\omega_1,\omega_2,\omega_1\times(-\omega_2)$, respectively. Suppose that 
\[
u_i:\R\times[i-1,i]\rightarrow F_i,\ \ i=1,2
\]
are $J$-holomorphic maps such that the following diagram commutes
\begin{equation*}
    \xymatrix{
    &L_1\ar[d]\\
    \mathbb R\times \{0\}\ar[r]^{\ \ \ u_1}\ar@{-->}[ur]^{\exists\tilde u_1}&F_1,
    }\ \ 
    \xymatrix{
    &L_2\ar[d]\\
    \mathbb R\times \{2\}\ar[r]^{\ \ \ u_2}\ar@{-->}[ur]^{\exists\tilde u_2}&F_2,
    }\ \ 
    \xymatrix{
    &F\ar[d]\\
    \mathbb R\times \{1\}\ar[r]^{ (u_1,u_2)}\ar@{-->}[ur]^{\exists(\Bar u_1,\Bar u_2)}&F_1\times F_2.
    }
\end{equation*}
If moreover 
\[
\lim_{x\rightarrow+\infty}(u_1,u_2)(x,y)=x_+,\ \ \lim_{x\rightarrow-\infty}(u_1,u_2)(x,y)=x_-,\ \ x_\pm\in F\times_{F_1\times F_2}(L_1\times L_2),
\]
then $(u_1,u_2)$ is called {\bf quilted holomorphic disc connecting} $\mathbf{x_\pm}$. For fixed $x_\pm\in F\times_{F_1\times F_2}(L_1\times L_2)$, the space of all such $(u_1,u_2)$ connecting $x_\pm$ is denoted as $\mathcal{X}^Q(x_+,x_-)$. There is a $\R$ action on $\mathcal{X}^Q(x_+,x_-)$ defined as
\begin{equation*}
    \begin{aligned}
    \R\times\mathcal{X}^Q(x_+,x_-)&\rightarrow\mathcal{X}^Q(x_+,x_-)\\
    (t,(u_1,u_2)(\cdot,\cdot))&\mapsto(u_1,u_2)(\cdot+t,\cdot).
    \end{aligned}
\end{equation*}
The moduli space of quilted holomorphic stripe connecting $x_\pm$ is defined as
\[
\mathcal{M}^Q(x_+,x_-):=\mathcal{X}^Q(x_+,x_-)/\R.
\]
It is proved in the following section that if the expected dimension of $\mathcal{M}^Q(x_+,x_-)$ is 0, then $\mathcal{M}^Q(x_+,x_-)$ is a compact smooth manifold for generic $J_1$ and $J_2$. The {\bf boundary map} $\mu^1_Q$ of the quilted Lagrangian Floer complex $\CF^{Q}(L_1,F,L_2;F_1,F_2)$ is defined as
\begin{equation*}
    \begin{aligned}
    \mu^1_Q:\CF^{Q}(L_1,F,L_2;F_1,F_2)\rightarrow\CF^{Q}(L_1,F,L_2;F_1,F_2)\\
    x_+\mapsto\sum_{\substack{x_-\in F\times_{F_1\times F_2}(L_1\times L_2)\\ \dim\mathcal{M}^Q(x_+,x_-)=0}}\#_{Mod\, 2}\mathcal{M}^Q(x_+,x_-)x_-.
    \end{aligned}
\end{equation*}

\begin{prop}\label{prop:sg}
Let $L_1$, $L_{12}$, $L_2$ be Lagrangian immersions in $(X_1,\omega_1)$, $(X_1\times X_2,\omega_1\times(-\omega_2))$, $(X_2,\omega_2)$, resp. Suppose $L_1$ and $L_{12}$, $L_{12}$ and $L_2$ are composable. Then the generators of the complexes $\CF(L_1,L_{12}\circ L_2; X_1)$, $\CF(L_1\circ L_{12},L_2;X_2)$, $\CF^{Q}(L_1,F,L_2;F_1,F_2)$ can be identified.
\end{prop}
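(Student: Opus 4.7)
The plan is to exhibit a single set $T$ of triples and show that each of the three generator sets is canonically in bijection with $T$. Let $l_i\colon L_i\to X_i$ for $i=1,2$ and $l^1=(l^1_1,l^1_2)\colon L_{12}\to X_1\times X_2$ be the immersion maps, and set
\[
T:=\{(x_1,y,x_2)\in L_1\times L_{12}\times L_2 \,:\, l_1(x_1)=l^1_1(y),\ l^1_2(y)=l_2(x_2)\}.
\]
I will then unpack each generator set directly from its definition and match it with $T$.

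By Definition \ref{def:lacom}, an element of $L_{12}\circ L_2$ is a pair $(y,x_2)$ with $l^1_2(y)=l_2(x_2)$, immersed into $X_1$ by $(y,x_2)\mapsto l^1_1(y)$. Hence $L_1\times_{X_1}(L_{12}\circ L_2)$ consists of nested pairs $(x_1,(y,x_2))$ with $l^1_2(y)=l_2(x_2)$ and $l_1(x_1)=l^1_1(y)$; this is $T$ up to reparenthesization. Symmetrically, $(L_1\circ L_{12})\times_{X_2}L_2$ consists of triples $((x_1,y),x_2)$ satisfying the same two equalities, which is again $T$. In the quilted setup the Lagrangian correspondence $F=L_{12}$ is immersed in $F_1\times F_2=X_1\times X_2$ via $(g_1,g_2)=(l^1_1,l^1_2)$, so $F\times_{F_1\times F_2}(L_1\times L_2)$ consists of pairs $(y,(x_1,x_2))$ with $(l^1_1(y),l^1_2(y))=(l_1(x_1),l_2(x_2))$, which is once more $T$.

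The composability hypothesis (Definition \ref{def:composable}) is used only to ensure, via Proposition \ref{prop:lagco}, that $L_1\circ L_{12}$ and $L_{12}\circ L_2$ are smooth manifolds carrying Lagrangian immersions, so that the first two Floer chain groups are defined in the first place. Once this is in hand, the content of the proposition is pure bookkeeping: three different parenthesizations of the same triple fiber product $L_1\times_{X_1}L_{12}\times_{X_2}L_2$. There is no real technical obstacle; the only thing worth checking is that the three regrouping maps are indeed mutually inverse bijections between the generating sets, which is immediate from the definitions. The substance of the identification is felt only later, when these bijections must be compatible with the correspondence between holomorphic strips in $X_1$, in $X_2$, and quilted holomorphic strips in $(F_1,F_2)$ established in the subsequent sections.
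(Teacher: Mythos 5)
Your argument is correct and essentially identical to the paper's own proof: both unpack the three fiber products as the same set of triples $(x_1,y,x_2)$ satisfying $l_1(x_1)=l^1_1(y)$ and $l^1_2(y)=l_2(x_2)$, differing only in parenthesization. Your added remark on where composability is needed (to make the composed Lagrangians and hence the chain groups well defined) is a fair clarification but not a different route.
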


\begin{proof}
Let 
\[
l^1:L_1\rightarrow X_1, l=(l_1,l_2):L_{12}\rightarrow X_1\times X_2,l^2:L_{2}\rightarrow X_2
\]
be the Lagrangian immersions. According to the definition of pull backs,
\begin{align*}
    L_1\times_{X_1}(L_{12}\circ L_2)
    &=L_1\times_{X_1}\{(y,z)\in L_{12}\times L_2|l_2(y)=l^2(z)\}\\
    &=\{(x,y,z)\in L_1\times L_{12}\times L_2|l^1(x)=l_1(y),l_2(y)=l^2(z)\}
\end{align*}
and
\[
L_{12}\times_{F_1\times F_2}(L_1\times L_2)
    =\{(y,x,z)\in L_{12}\times L_1\times L_{2}|(l_1(y),l_2(y))=(l^1(x),l^2(z))\}.
\]
Therefore
\[
L_{12}\times_{F_1\times F_2}(L_1\times L_2)=L_1\times_{X_1}(L_{12}\circ L_2).
\]
Similarly
\[
L_{12}\times_{F_1\times F_2}(L_1\times L_2)=(L_1\circ L_{12})\times_{X_2} L_2.
\]
The proposition follows.
\end{proof}

\section{Moduli space of holomorphic quilts}\label{sec:3}
We define the moduli space of holomorphic quilts using the method in Wehrheim and Woodward \cite{wehrheim2010quilted}. In \cite{wehrheim2010quilted}, Wehrheim and Woodward prove that embedded holomorphic quilts can be regarded as embedded holomorphic strips. The goal of this section is to generalize their result to the immersed cases and prove that when the virtual dimension for the moduli space is 0, the moduli space consists of finitely many points for an open and dense subsets of the almost complex structures on the symplectic manifolds.

Suppose $(F_1,\omega_1)$ and $(F_2,\omega_2)$ are closed symplectic surfaces with compatible almost complex structures $J_1$ and $J_2$ respectively. Let $L_i$ be Lagrangian immersions into $F_i$ for $i=1,2$ and $F$ is a Lagrangian immersion into the symplectic manifold $(F_1\times F_2,\omega_1\times(-\omega_2))$. Given a quilted holomorphic strip $(u_1,u_2)(s,t):\R\times[0,2]\rightarrow F_1\times F_2$ connection $x_\pm$, where $x_\pm$ are elements in the fiber product of $L_1\times L_2$ and $F$, the corresponding holomorphic strip on $(F_1\times F_2, J_1\times J_2)$ is defined as below:
\begin{align*}
    (w_1,w_2):\R\times [0,1]&\rightarrow \ \ \ \ F_1\times F_2\\
    (s,t)\ \ \ \ \ &\mapsto (u_1(s,t),u_2(s,1-t)).
\end{align*}
This map $(w_1,w_2)$ has its boundaries $\R\times\{0\}$ and $\R\times\{1\}$ in $L_1\times L_2$ and $F$ respectively. Conversely, given a holomorphic strip $(w_1,w_2)(s,t):\R\times [0,1]\rightarrow F_1\times F_2$, the quilted holomorphic strip can be recoverd with the expression $(u_1,u_2)(s,t)=(w_1(s,t),w_2(s,1-t))$.\\

Let $(X,\omega)$ be a symplectic manifold with a compactible almost complex structure $J$. {\bf The next goal} is to show that the moduli space of $J$-holomorphic strips mapped into $(X,\omega)$ whose boundary is in two transveral intersecting Lagrangians is a smooth manifold for generic $J$. Here generic stands for an open dense subset of all $J$-holomorphic structures compatible with $\omega$. Then combining this result with the above identification between $J$-holomorphic quilts with $J$-holomorphic strips, one gets that the moduli space of immersed $J$-holomophic quilts is smooth for generic $J$, with the assumption that the Lagrangian immersions intersect transversely.

Suppose that $L_1$ and $L_2$ are two Lagrangian immersions in $X$ that intersect transversely. Let $u:\R\times[0,1]\rightarrow X$ be an immersed strip with its boundary in $L_1$ and $L_2$ connecting $x_\pm\in L_1\times_X L_2$. Denote $W^{1,p}_{x_\pm}(\R\times[0,1],X)$ as the Sobolev $W^{1,p}$ space of all such immersed strips connecting $x_\pm$, $p\in(2,\infty)$. Given $u\in W^{1,p}_{x_\pm}(\R\times[0,1],X)$, $L^p(\R\times[0,1],\Omega^{0,1}\otimes u^*TX)$ is defined as the Sobolev $L^p$ completion of the sections of the bundle $\Omega^{0,1}\otimes u^*TX\rightarrow \R\times[0,1]$. Here $\Omega^{0,1}$ is the bundle of $(0,1)$-forms over $\R\times[0,1]$. Therefore the Cauchy-Rimenann operator induces the map
\begin{equation}\label{equ:3.1}
    \begin{aligned}
    \mathcal{F}: W^{1,p}_{x_\pm}(\R\times[0,1],X)\times \mathcal{J}&\rightarrow L^p(\R\times[0,1],\Omega^{0,1}\otimes u^*TX)\\
    (u,J)&\mapsto \bar\partial_Ju,
\end{aligned}
\end{equation}
where $\mathcal{J}$ is the space of all $J$-holomorphic structures that are compatible with $\omega$. \\

We claim that the differential of $\bar\partial_J$ is surjective for generic $J\in\mathcal{J}$ following the arugment in Paul Seidel \cite{seidel2008fukaya} Section 9k. In fact, Seidel proved the transversality of the moduli space of $J$-holomorphic curves for generic $J$ and Hamiltonian perturbations. The argument in \cite{seidel2008fukaya} proving the transversality with respect to $J$ is independent of the Hamiltonian perturbations. The proof there uses the same functional as in Equation (\ref{equ:3.1}), except the map $u$ there is an embedding. Seidel showed that the differential of $\mathcal{F}$ is surjective and moreover the differential of $\bar\partial_J$ is also surjective for an open dense subset of $\mathcal{J}$. The condition that $u$ is an embedding is not necessary in the proof. So Seidel's proof actually applies here. Therefore the theorem below is correct.

\begin{them}
    Suppose $(X,\omega)$ is a closed symplectic manifold. Let $L_1$ and $L_2$ be two Lagrangian immersions in $X$. Denote $\mathcal{J}$ as the set of compatible almost complex structures on $X$. Then the differential $D\bar\partial_J$ of the Cauchy-Riemann operator 
    \begin{equation*}
        \bar\partial_J:W^{1,p}_{x_\pm}(\R\times[0,1],X)\rightarrow L^p(\R\times[0,1],\Omega^{0,1}\otimes u^*TX)
    \end{equation*}
    is surjective for an open and dense subsets of $\mathcal{J}$.
\end{them}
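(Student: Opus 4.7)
The plan is to follow the standard universal moduli space argument, as laid out in Seidel \cite{seidel2008fukaya} and originally due to Floer. First I would work in the larger Banach manifold $\mathcal{J}^\ell$ of $C^\ell$-compatible almost complex structures (for a sufficiently large integer $\ell$), and extend the map in Equation (\ref{equ:3.1}) to a smooth section
\[
\mathcal{F}:W^{1,p}_{x_\pm}(\R\times[0,1],X)\times\mathcal{J}^\ell\rightarrow L^p(\R\times[0,1],\Omega^{0,1}\otimes u^*TX).
\]
The universal moduli space $\mathcal{M}^{univ}=\mathcal{F}^{-1}(0)$ would then be cut out as a Banach submanifold once I show that $D\mathcal{F}_{(u,J)}$ is surjective at every zero. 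After that, the projection $\pi:\mathcal{M}^{univ}\rightarrow\mathcal{J}^\ell$ is Fredholm, and by the Sard-Smale theorem its set of regular values is residual. By construction, $J$ being a regular value is equivalent to $D\bar\partial_J$ being surjective at every $u\in\bar\partial_J^{-1}(0)$.

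The key technical step is surjectivity of $D\mathcal{F}_{(u,J)}$. The linearization has the shape $(\xi,Y)\mapsto D_u\xi+\tfrac{1}{2}Y(u)\circ du\circ j$, where $D_u$ is the linearized Cauchy-Riemann operator and $Y\in T_J\mathcal{J}^\ell$. Since $D_u$ is Fredholm, the image of $D\mathcal{F}$ is closed, and it suffices to rule out a nontrivial annihilator $\eta\in L^q$. Varying $Y$ alone (with $Y$ supported in the interior of the strip and taking values in the linear space of compatible endomorphisms) forces $\eta$ to vanish pointwise on the open set of somewhere-injective points
\[
Z=\bigl\{(s,t)\in\R\times(0,1)\mid du(s,t)\neq 0\ \text{and}\ u^{-1}(u(s,t))=\{(s,t)\}\bigr\}.
\]
Then varying $\xi$ alone gives $D_u^*\eta=0$, so $\eta$ satisfies an elliptic equation and vanishes on $Z$; unique continuation upgrades this to $\eta\equiv 0$. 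The existence of somewhere-injective interior points for a non-constant $J$-holomorphic strip is the standard lemma of Floer-Hofer-Salamon / McDuff-Salamon, whose proof does not use embeddedness of $u$; non-constancy is guaranteed here by $x_+\neq x_-$.

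Finally, to pass from $\mathcal{J}^\ell$ back to the space $\mathcal{J}$ of smooth compatible almost complex structures, I would apply the Taubes trick: the set $\mathcal{J}^{reg}\subset\mathcal{J}$ of smooth regular $J$ is the countable intersection, over finite energy bounds, of the $C^\infty$-traces of the residual sets in $\mathcal{J}^\ell$, and hence is itself residual (thus dense) in $\mathcal{J}$. Openness is automatic, since surjectivity of a Fredholm operator is preserved under small perturbations and $J\mapsto D\bar\partial_J$ is continuous in the operator norm.

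The main obstacle I anticipate is precisely the somewhere-injectivity step for immersed strips, since in the immersed setting $u$ can a priori have double points or even factor through a branched cover. However, as already noted in the paragraph before the statement, Seidel's analysis in \cite[Section 9k]{seidel2008fukaya} never uses that $u$ is an embedding in this step; it uses only non-constancy and the standard interior somewhere-injectivity lemma. So the entire argument transports to our immersed Lagrangian setting with the Lagrangian boundary conditions playing no role in the interior perturbation argument.
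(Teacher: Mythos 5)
Your proposal is essentially the paper's own argument: the paper proves this theorem simply by invoking Seidel's Section 9k transversality argument (the universal moduli space, surjectivity of $D\mathcal{F}_{(u,J)}$ via interior perturbations of $J$, Sard--Smale, and the passage back to smooth $\mathcal{J}$), together with the observation that nothing in that argument uses embeddedness of $u$. You spell out the same route in more detail than the paper does (somewhere-injectivity via Floer--Hofer--Salamon, the Taubes trick, openness from Fredholm stability), so there is no divergence in approach to report.
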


\begin{defn}
    Denote $\mathcal{J}_{reg}\subset\mathcal{J}$ as the subset where the differential of the Cauchy-Riemann operator is surjective.
\end{defn}

Since $D\bar\partial_J$ is Fredholm with trivial cokernel for $J\subset\mathcal{J}_{reg}$, if moreover the index of $D\bar\partial_J$ is 0, then space of the solution of the Cauchy-Riemann operator is a smooth manifold with dimension 0. According to Gromov compactness, this space consists of finitely many points. So we have the following corollary.

\begin{cor}
    Suppose $(X,\omega)$ is a closed symplectic manifold. Let $L_1$ and $L_2$ be two Lagrangian immersions in $X$. If $J\in\mathcal{J}_{reg}$ and the index of the differential of the Cauchy-Riemann operator is 0, then the solution space of the Cauchy-Riemann operator consists of finitely many points.
\end{cor}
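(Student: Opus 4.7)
The approach would be the standard two-step argument combining the implicit function theorem with Gromov compactness. First I would show the solution set is discrete, then that it is compact; together these give finiteness.

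For discreteness, I would apply the implicit function theorem to $\bar\partial_J$ at each zero $u$. By the hypothesis $J\in\mathcal{J}_{reg}$, the linearization $D\bar\partial_J$ is surjective, and by the index assumption its Fredholm index is $0$; hence its kernel is trivial. In the Banach setting this means $u$ is an isolated zero of $\bar\partial_J$ in $W^{1,p}_{x_\pm}(\R\times[0,1],X)$, so the full solution set is a discrete subset of this Banach manifold.

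For compactness, I would appeal to Gromov compactness. Given a sequence $u_n$ of solutions, the energies $\int u_n^*\omega$ are uniformly bounded because the target is compact and the relative homotopy class is pinned down by the asymptotics $x_\pm$ up to a discrete, energy-bounded ambiguity. After passing to a subsequence, $u_n$ converges to a limiting configuration consisting of a possibly broken strip with possibly attached sphere or disc bubbles, and possibly with breaks at self-intersections of the immersed Lagrangians. Each such degeneration strictly decreases the virtual dimension; combined with the regularity of $J$, no stratum of negative virtual dimension is nonempty, so the limit must in fact be a single unbroken unbubbled strip in the solution set. This shows the set is sequentially compact, and being discrete as well it must be finite.

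The main obstacle I expect is a careful treatment of compactness in the immersed setting. Disc bubbles in a closed surface correspond to filling in complementary components of the images of the Lagrangian curves, and broken configurations whose breaks occur at double points of the immersed Lagrangians require explicit dimension counts in the stratified moduli space. These points are handled by the standard immersed Floer theory machinery, but in the two-dimensional case treated here they are also consistent with the combinatorial picture of Example \ref{exp:abou}, which provides a useful sanity check that no exotic degenerations can arise in virtual dimension $0$.
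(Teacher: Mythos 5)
Your argument is essentially the paper's own proof: regularity of $J$ together with index $0$ makes the solution set a discrete (zero-dimensional) manifold, and Gromov compactness gives compactness, hence finiteness. The extra discussion of excluding breaking and bubbling is a more detailed unpacking of the same compactness step the paper invokes in one line, not a different route.
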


\section{Transversality of Lagrangian immersions}\label{sec:4}
This section first introduces the definition of local Hamiltonian perturbations. Suppose $(F_1,\omega_1)$ and $(F_2,\omega_2)$ are closed symplectic surfaces. Let $L_i$ be Lagrangian immersions into $F_i$ for $i=1,2$ and $F$ is a Lagrangian immersion into the symplectic manifold $(F_1\times F_2,\omega_1\times(-\omega_2))$. If $L_2$ and $F$ are composable, then their Lagrangian composition $F\circ L_2$ is a Lagrangian immersion into $F_1$. The main theorem in this section is to construct a local Hamiltonian perturbation of $F$ such that the induced perturbation on $F\circ L_2$ recovers a given perturbation on $F\circ L_2$.\\

Let $(X,\omega)$ be a symplectic manifold. Assume that $f:L\rightarrow X$ is a Lagrangian immersion. According to the Weinstein tubular neighbourhood theorem \ref{thm:tubu}, the tubular neighbourhood of $f(L)$ can be identified with a neighbourhood $T^*_\varepsilon L$ of the 0-section $T^*L$. \textbf{A local Hamiltonian perturbation of $\mathbf{L}$} is a Hamiltonian flow $\phi_t$ in $T^*_\varepsilon L$ such that $\phi_0=id$ and the Hamiltonian function of $\phi_t$ supports in $T^*_\varepsilon L$.

\begin{rmk}
    In contrast of Hamiltonian perturbations, local Hamiltonian perturbations do not necessarily preserve Lagrangian Floer homology. According to the main theorem in \cite{zhang2024singularities}, given a Lagrangian immersion $(g_1,g_2): F\rightarrow F_1\times F_2$, the singularities of $g_1$ and $g_2$ are generically fold singularities (Definition \ref{def:foldsin}) with finitely many cusps. After writing out the the Lagrangian composition in local coordinates around the fold singularities, one gets that the Lagrangian immersion derived from the Lagrangian correspondence $F$ has non-transveral self-intersections. Since Hamiltonian perturbations are Hamiltonian flows over the symplectic manifolds, these perturbations must preserve non-transveral self-intersections. Therefore Hamiltonian perturbations are not enough.

\begin{figure}[H]
\centering 
\includegraphics[width=0.6\textwidth]{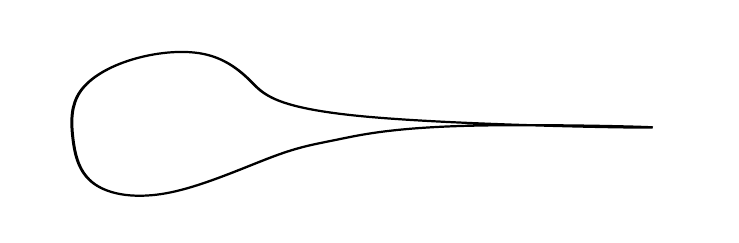}
\caption{The Lagrangian immersion composition has non-transversal self-intersections near the fold singularities without local Hamiltonian perturbation.}
\end{figure}
\end{rmk}

\begin{defn}\label{def:foldsin}
    Suppose $f:X\rightarrow Y$ is a smooth map between two surfaces. A point $x\in X$ is called a \textbf{fold singularity} if there are local coordinates $(x_1,x_2)$ and $(y_1,y_2)$ around $x$ and $f(x)$ respectively such that $f(x_1,x_2)=(y_1,y_2)=(x_1,x_2^2)$.
\end{defn}

\begin{ex}
Suppose the Lagrangian correspondence $F\looparrowright F_1\times F_2$ is given by the following picture. The left and right vertical maps are folds and the top horizontal map is a 0-Dehn twist. The Lagrangian composition of the curve is show in the figure. The following proposition implies that after performing a Hamiltonian perturbation to $F\looparrowright F_1\times F_2$, the Lagrangian composition of the curve is shown in the right of Figure \ref{fig:Lagcomfix}.

\begin{figure}[H]
\centering 
\includegraphics[width=1\textwidth]{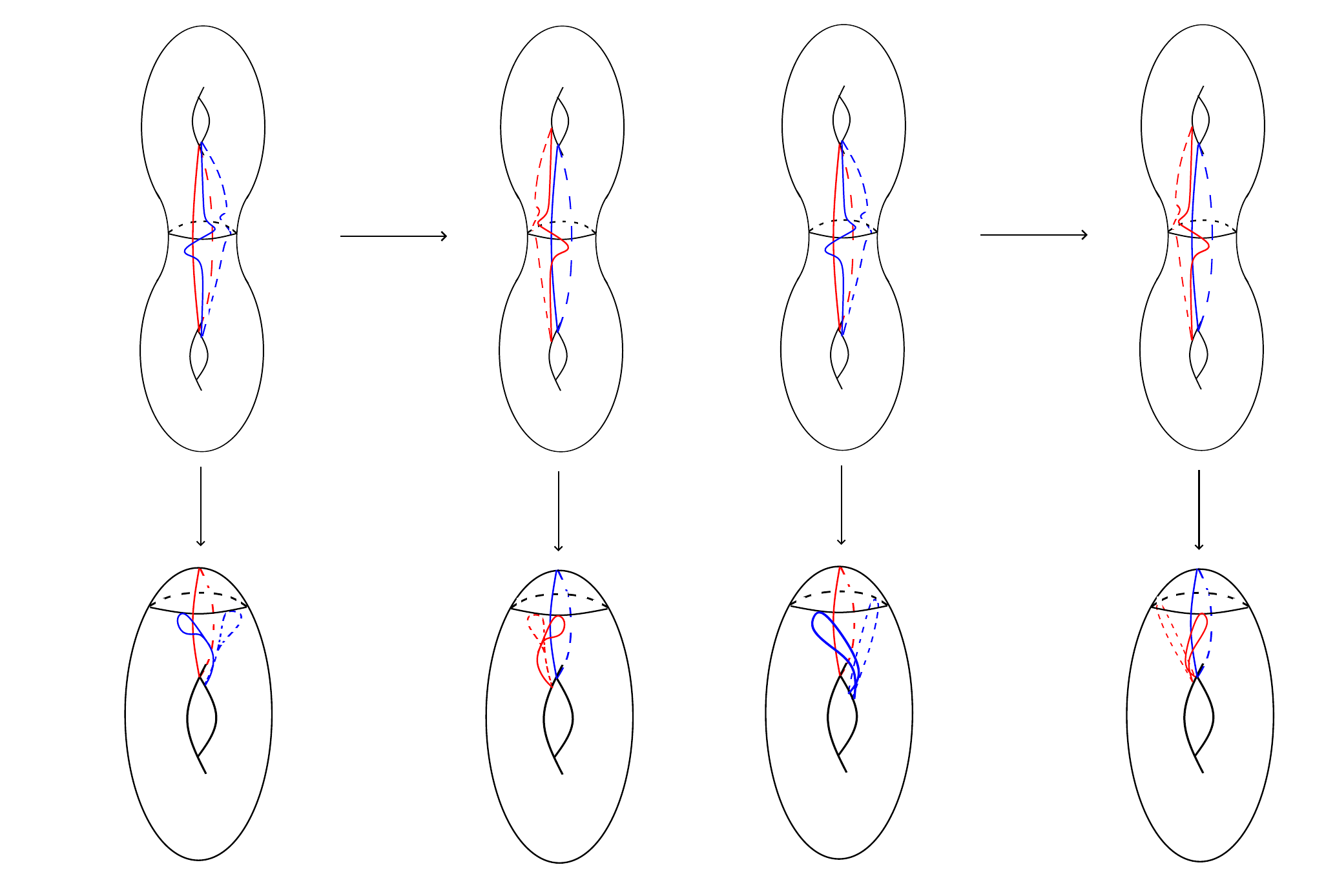}
\caption{The red curve is $L_1$ and the blue one is $L_2$. The picture on the left-hand side is before the local Hamiltonian perturbation, and the picture on the right is after the local Hamiltonian perturbation.}
\label{fig:Lagcomfix}
\end{figure}

\end{ex}

\begin{prop}\label{prop:4.1}
    Suppose $(F_1,\omega_1)$ and $(F_2,\omega_2)$ are closed symplectic surfaces. Let $L_i$ be Lagrangian immersions into $F_i$ for $i=1,2$ and $(g_1,g_2):F\rightarrow(F_1\times F_2,\omega_1\times(-\omega_2))$ be a Lagrangian immersion. Assume the Lagrangian composition $F\circ L_2$ is a Lagrangian immersion into $F_1$. Given a local Hamiltonian perturbation $\phi_t$ of $F\circ L_2$ such that $\phi_t$ fixes a neighbourhood of the image of the singularity of $g_1$, then there is a local Hamiltonian perturbation of $F$ such that the induced perturbation on $F\circ L_2$ that recovers the perturbation $\phi_t$.
\end{prop}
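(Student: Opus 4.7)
The plan is to lift the Hamiltonian $H_t$ generating $\phi_t$ on a tubular neighbourhood of $F\circ L_2\subset F_1$ to a function $\tilde H_t$ on $F_1\times F_2$ depending only on the first factor, so that its Hamiltonian flow with respect to $\omega_1\oplus(-\omega_2)$ moves only in the $F_1$-direction. Since the matching condition $g_2(p)=l^2(q)$ for Lagrangian composition with $L_2$ is unchanged by such a flow, this pushes $F\circ L_2$ to $\phi_t(F\circ L_2)$ in $F_1$. A Weinstein cutoff around $F\hookrightarrow F_1\times F_2$ then packages this into a genuine local Hamiltonian perturbation of $F$.

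In more detail, pick any $p\in F$ outside $\mathrm{Sing}(g_1)$, so that on some open $W\subset F$ the restriction $g_1|_W$ is a diffeomorphism onto $g_1(W)\subset F_1$. Locally $F$ is then the graph of the symplectomorphism $\varphi:=g_2\circ(g_1|_W)^{-1}:g_1(W)\to F_2$, and $F\circ L_2$ is identified with $\varphi^{-1}(L_2)\subset g_1(W)$. The Weinstein identification of a tubular neighbourhood of $F\circ L_2$ with $T^*_\varepsilon(F\circ L_2)$ lets us view $H_t$ locally as a function on $F_1$, vanishing near $g_1(\mathrm{Sing}(g_1))$ by assumption. Set $\tilde H_t(x,y):=H_t(x)$ near the graph of $\varphi$; a short computation gives $X_{\tilde H_t}=(X_{H_t},0)$ with respect to $\omega_1\oplus(-\omega_2)$, so the time-$t$ flow is $\Psi_t=(\phi_t,\mathrm{id})$. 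Under $\Psi_t$ the graph of $\varphi$ becomes the graph of $\varphi\circ\phi_{-t}$, and its composition with $L_2$ is $(\varphi\circ\phi_{-t})^{-1}(L_2)=\phi_t(\varphi^{-1}(L_2))=\phi_t(F\circ L_2)$, as required.

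To turn $\tilde H_t$ into a local Hamiltonian perturbation of $F$, apply Theorem~\ref{thm:tubu} to identify a tubular neighbourhood of $F$ in $F_1\times F_2$ with $T^*_{\varepsilon'}F$. Choose a cutoff $\chi:F_1\times F_2\to[0,1]$ equal to $1$ on a smaller neighbourhood of $F$ and supported in this tubular neighbourhood, and set $\hat H_t:=\chi\tilde H_t$. Because $\chi\equiv 1$ near $F$, both $\chi|_F=1$ and $d\chi|_F=0$, so $X_{\hat H_t}$ coincides with $X_{\tilde H_t}$ along $F$; for sufficiently small $\phi_t$, the orbit of $F$ under $\hat H_t$ stays inside $\{\chi=1\}$ and so the two flows agree along that orbit. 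Globalize using a partition of unity subordinate to an open cover of $F\setminus\mathrm{Sing}(g_1)$ fine enough that distinct branches of $F\looparrowright F_1\times F_2$ lie in disjoint charts, so that the local lifts of $H_t$ do not interact.

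The principal obstacle is the self-intersection issue in the last step: since $F$ and $F\circ L_2$ are only immersed, no single function on $F_1$ can carry branch-dependent information, and so the naive lift $\tilde H_t(x,y)=H_t(x)$ is defined only locally. What makes the construction nevertheless go through is that two branches of $F\circ L_2$ meeting at the same point of $F_1$ come from points of $F$ with distinct $F_2$-coordinates, and hence their lifts in the Weinstein neighbourhood of $F\subset F_1\times F_2$ are automatically separated. This is precisely why we phrase the result as producing a local Hamiltonian perturbation of $F$ rather than of the ambient $F_1\times F_2$. The hypothesis that $\phi_t$ fixes a neighbourhood of $g_1(\mathrm{Sing}(g_1))$ is what keeps us away from the fold locus where $g_1$ fails to be a local diffeomorphism and the graph description breaks down.
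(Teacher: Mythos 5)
Your core construction is the same as the paper's: away from $\mathrm{Sing}(g_1)$ the branch of $F$ is transverse to the fibres $\{p\}\times F_2$, so you extend $H_t$ constantly in the $F_2$-direction, cut off near $F$, and pull back through the Weinstein neighbourhood of $F$ to get a local Hamiltonian perturbation; your explicit check that the flow $(\phi_t,\mathrm{id})$ turns the graph of $\varphi$ into the graph of $\varphi\circ\phi_{-t}$, whose composition with $L_2$ is $\phi_t(F\circ L_2)$, is a nice addition that the paper leaves implicit. However, there is a genuine gap in how you pass from ``sufficiently small $\phi_t$'' to the general statement. Agreement of $X_{\hat H_t}$ with $X_{\tilde H_t}$ along $F$ at time $0$ is not enough: the orbit of a point $(x,\varphi(x))$ under the ideal lift is $(\phi_t(x),\varphi(x))$, which drifts away from $F$ by roughly $d\bigl(\varphi(\phi_t(x)),\varphi(x)\bigr)$, so the cut-off flow only reproduces $\phi_t$ if this orbit stays in $\{\chi=1\}$. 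You acknowledge this by assuming $\phi_t$ small, but the proposition is stated for an arbitrary local Hamiltonian perturbation fixing a neighbourhood of $g_1(\mathrm{Sing}(g_1))$, and your globalization step does not repair this: refining the open cover of $F\setminus\mathrm{Sing}(g_1)$ and summing lifts via a partition of unity makes the \emph{supports} small, not the \emph{displacement} of the given flow. What is missing is precisely the paper's preliminary lemma, which writes $\phi_t$ as a \emph{composition} of local Hamiltonian perturbations whose supports (hence displacements) are arbitrarily small, and then applies the cutoff construction to each factor in turn.

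A second, smaller inaccuracy: you claim that two branches of $F\circ L_2$ over the same point of $F_1$ necessarily have distinct $F_2$-coordinates, so their lifts near $F\subset F_1\times F_2$ are separated. This fails when the two branches come from a genuine self-intersection point of the immersion $(g_1,g_2):F\looparrowright F_1\times F_2$, where the $F_2$-coordinates coincide as well. The correct resolution, which is how the paper's definition is set up, is that a local Hamiltonian perturbation of $F$ is a flow on the domain $T^*_\varepsilon F$ of the Weinstein immersion (the map $G^F$ need not be injective), and there the branches are separated because they lie over distinct points of $F$; so you should phrase the cutoff and the resulting Hamiltonian as living on $T^*_\varepsilon F$ after pulling back by $G^F$, rather than as a function on $F_1\times F_2$, exactly as the paper does with $H_2=\rho H_1$ regarded as a function on $T^*_\varepsilon F$.
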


\begin{lem}
    Let $(X,\omega)$ be a symplectic manifold. Assume that $f:L\rightarrow X$ is a Lagrangian immersion. Suppose that $\phi_t$ is a local Hamiltonian perturbation of $L$. Then $\phi_t$ is a composition of several local Hamiltonian perturbations such that the each support of these local Hamiltonian perturbation is arbitrarily small.
\end{lem}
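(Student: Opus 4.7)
My plan is the classical fragmentation argument, carried out in two stages that I then combine.

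\textbf{Time subdivision.} Let $H_s$ be the time-dependent Hamiltonian on $T^*_\varepsilon L$ generating $\phi_t$. For any integer $N$, setting $t_i=i/N$, one has
\[
\phi_1 \;=\; \bigl(\phi_{t_N}\phi_{t_{N-1}}^{-1}\bigr)\circ\cdots\circ\bigl(\phi_{t_1}\phi_{t_0}^{-1}\bigr),
\]
and each factor $\phi_{t_i}\phi_{t_{i-1}}^{-1}$ is the time-one map of the rescaled Hamiltonian $K^i_s := (1/N)\,H_{t_{i-1}+s/N}$, still supported in $T^*_\varepsilon L$. Uniform continuity of the isotopy on the compact set $\operatorname{supp}(H)$ forces each factor to become as $C^1$-close to the identity as desired once $N$ is large.

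\textbf{Fragmentation of a near-identity factor.} Given such a $\psi$, I would identify its graph $\Gamma_\psi\subset(X\times X,\omega\oplus(-\omega))$ with a section of a Weinstein neighbourhood of the diagonal $\Delta_X\cong X$ (applying Theorem \ref{thm:tubu} to the Lagrangian $\Delta_X$). Because $\psi$ is Hamiltonian and $C^1$-small, this section is the graph of an exact $1$-form $df$ for some compactly supported $f\colon X\to\R$. Pick an open cover $\{U_\alpha\}$ of $\operatorname{supp}(f)$ by sets of diameter less than the prescribed $\delta>0$ together with a subordinate partition of unity $\{\rho_\alpha\}$, put $f_\alpha := \rho_\alpha f$, and let $\psi_\alpha$ be the Hamiltonian diffeomorphism recovered from the generating function $f_\alpha$ via the inverse Weinstein identification. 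Each $\psi_\alpha$ then has support in (a neighbourhood of) $U_\alpha$.

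To promote this to an exact product decomposition $\psi=\psi_{\alpha_k}\circ\cdots\circ\psi_{\alpha_1}$ I would iterate: set $\psi^{(0)}=\psi$ and at step $j$ form $\psi^{(j)}=\psi_{\alpha_j}^{-1}\psi^{(j-1)}$, which remains $C^1$-small and whose generating function is concentrated in $\bigcup_{\beta>j}U_\beta$ up to a higher-order Poisson-bracket error; that error is then absorbed by redefining the next piece $\psi_{\alpha_{j+1}}$. Composing the fragmentations of each factor from the first stage expresses $\phi_1$ as a composition of local Hamiltonian perturbations each supported in a set of diameter less than $\delta$.

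The main obstacle is precisely this last iterative step: the generating-function correspondence $f\mapsto\psi_f$ is additive only to first order, so the naive splitting $f=\sum_\alpha f_\alpha$ does \emph{not} give an exact product decomposition of $\psi$. The time-subdivision of the first stage is what guarantees the Poisson-bracket errors remain of quadratic size in $\|df\|_{C^0}$ and genuinely shrink at each iteration, which is exactly what is needed for the fragmentation procedure to terminate in finitely many steps with exact equality rather than mere $C^0$-approximation.
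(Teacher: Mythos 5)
The last iterative step of your fragmentation is a genuine gap, and it is exactly the point you flag yourself. The correspondence between compactly supported functions and near-identity Hamiltonian diffeomorphisms via graphs in a Weinstein neighbourhood of the diagonal is only additive to first order, and your proposal to "absorb the Poisson-bracket error by redefining the next piece $\psi_{\alpha_{j+1}}$" is not shown to preserve either the exactness of the product decomposition or the smallness of supports: the error created at step $j$ has no reason to be supported in $U_{\alpha_{j+1}}$, so enlarging $\psi_{\alpha_{j+1}}$ to swallow it defeats the purpose, and nothing guarantees the process closes up after finitely many steps with exact equality. The $C^1$-smallness obtained from your time subdivision controls the \emph{size} of the error but not its \emph{location}, which is what the statement is about; as written, the second stage only yields an approximation of $\psi$ by products of small-support pieces, not the claimed identity.

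The paper avoids this entirely by fragmenting the Hamiltonian \emph{function} rather than the diffeomorphism: choose a finite cover $\{U_\alpha\}$ of $\mathrm{supp}(H)$ with subordinate partition of unity $\{\rho_\alpha\}$, write $H=\sum_\alpha\rho_\alpha H$, and decompose the flow accordingly. The decomposition is exact from the start, so there is no error term to iterate away; the only point requiring care (which the paper glosses over and your first stage would in fact help with) is that the flow of a sum is not literally the composition of the flows, but the standard composition formula, namely that $\phi^{F}_t\circ\phi^{G}_t$ is generated by $F_t+G_t\circ(\phi^{F}_t)^{-1}$, lets one peel off the pieces one at a time, with each conjugated piece supported in the image of $\mathrm{supp}(\rho_\alpha H)$ under an intermediate flow inside $T^*_\varepsilon L$, which can be kept arbitrarily small by refining the cover and, if needed, cutting in time as in your first stage. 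So keep your time-subdivision step, but replace the graph/generating-function iteration by a partition of unity applied directly to $H$ together with this composition formula; the passage through exact sections near the diagonal is machinery the lemma does not need and is where your argument stops being a proof.
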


\begin{proof}
    By the definition of $\phi_t$, the Hamiltonian function is supported in a neighbourhood $T^*_\varepsilon L$ of the 0-section in $T^*L$. Denote the Hamiltonian function of $\phi_t$ as $H$. Then $H$ is supported in $T^*_\varepsilon L$. Suppose $\{U_\alpha\}$ is a finite collection of open sets covering the support of $H$ with the corresponding partition of unity $\{\rho_\alpha\}$. Then $H=\sum_\alpha \rho_\alpha H$. As a result, $\phi_t$ is the composition of the Hamiltonian flows induced by $\rho_\alpha H$.
\end{proof}

\begin{proof}[Proof of Proposition \ref{prop:4.1}]
    According to the previous lemma, we can assume the support of $\phi_t$ is small enough. The idea is to identify the support of $\phi_t$ as a subset of $T^*_\varepsilon F$ and extend the Hamiltonian function $H$ of $\phi_t$ to a neighbourhood that is constant in the $F_2$ direction.\\

    Suppose that there is a small enough interval $I$ in the domain of the Lagrangian immersion $l_2^F:F\circ L_2\looparrowright F_1$ away from the singularity of $g_1$. Then
    \[
    l_2^F|_{I}:I\rightarrow F_1
    \]
    is a Lagrangian submanifold. The Weinstein tubular neighbourhood theorem introduces a symplectomorphism $G$ as indicated in the following:
    \begin{equation*}
    \xymatrix{
    T_\varepsilon^*I \ar[dr]^G & \\
    I \ar[r]^{l_2^F|_{I}\ \ \ } \ar[u] &U\subset F_1.}
    \end{equation*}
    Without loss of generality, assume the support of $H$ is contained in $T_\varepsilon^*I$ and the connected component of $g_1^{-1}(U)$ intersecting $I$ non-trivially is differomorphic to $U$. Combining this with the local symplectic diffeomorphism $G^F$ from the Weinstein tubular neighbourhood theorem as in the following diagram, $T_\varepsilon^*I$ can be identified with a 2-dimensional subset $V$ (open in the induced topology) contained in the 0-section of $T_\varepsilon^*F$.
    \begin{equation*}
        \xymatrix{
    T_\varepsilon^*F \ar[dr]^{G^F} & \\
    F \ar[r] \ar[u] &F_1\times F_2}.
    \end{equation*}
    By identifying $T_\varepsilon^*F|_U$ with its image using $G^F$, recalling that $F\looparrowright F_1\times F_2$ is a Lagrangian immersion, then $V$ intersects trasversely with $\{p\}\times F_2,\forall p\in F_1$.

    Regarding $H$ as a function defined on $V$, because $V$ intersects trasversely with $\{p\}\times F_2,\forall p\in F_1$, $H$ can be extended constantly in the $F_2$ direction to get $H_1$. Let $\rho$ be a smooth cut off function defined on $F_1\times F_2$ such that:
    \begin{itemize}
        \item $\rho$ is supported in a small neighbourhood of $V$,
        \item $\rho=1$ on the support of $H$.
    \end{itemize}
    Then $H_2:=\rho H_1$ is a smooth function defined on $T_\varepsilon^*F$ such that $H_2|_V=H$. Therefore the Hamiltonian flow of $H_2$ gives a local Hamiltonian perturbation of $F$ and this local Hamiltonian perturbation induces a perturbation on $F\circ L_2$ that recovers $\phi_t$.
\end{proof}

%\begin{rmk}
%    The above proof actually shows that given a Hamiltonian perturbation performed in the Weinstein neighbourhood of $F\circ L_2$, then there is a Hamiltonian perturbation in the Weinstein neighbourhood of $F$ (denote the surface after the perturbation by $F'$) such that  $F'\circ L_2$ recovers the given Hamiltonian perturbation on $F\circ L_2$.
%\end{rmk}

\section{Construction of quilted holomorphic discs}\label{sec:6}

The goal of this section is to prove the main theorem of this paper. For reader's convenience, the main theorem is restated as follows:

\begin{them}
    Let $(F_1.\omega_1)$ and $(F_2,\omega_2)$ be two symplectic closed surfaces equipped with compatible almost complex structures. Suppose $L_i\looparrowright F_i$ $\mathrm{(}i=1,2\mathrm{)}$ and $F\looparrowright (F_1\times F_2,\omega_1\times(-\omega_2))$ are Lagrangian immersions such that $L_1\times L_2$ intersects $F$ transversely in $F_1\times F_2$. Further more, $F$ and $L_2$ are composable. Assume that
    \[
    u(x,y):\R\times[0,1]\rightarrow F_1
    \]
    is a holomorphic map such that     
    \begin{itemize}
        \item $\lim_{x\rightarrow\pm\infty}u(x,y)=x_{\pm}$, where $x_\pm$ are two points in the intersection of $L_1$ and $F\circ L_2$,
        \item $u$ has its boundary in $L_1$ and $F\circ L_2$ $($Definition \ref{def:lagbd}$)$,
        \item the image of $u$ is contained in the image of $F\looparrowright F_1\times F_2$ after projecting to the first factor.
    \end{itemize} 
    Then there is a holomorphic map $\tilde u(x,y):\R\times[0,1]\rightarrow F_1\times F_2$ such that
    \begin{itemize}
        \item $\lim_{x\rightarrow\pm\infty}\tilde u(x,y)=\tilde x_{\pm}$, where $\tilde x_\pm$ are two points in the intersection of $F$ and $L_1\times L_2$ corresponding to $x_\pm$ (in the sense of Proposition \ref{prop:sg}),
        \item $u$ has its boundary in $L_1\times L_2$ and $F$.
    \end{itemize}
\end{them}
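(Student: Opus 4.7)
The strategy follows the Wehrheim--Woodward strip-shrinking construction from \cite{wehrheim2009floer}, specialized to the two-dimensional setting of the present paper.

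\textbf{Step 1 (Lifting $u$).} Using the hypothesis that $u(\R\times[0,1])\subseteq g_1(F)$ and that $\R\times[0,1]$ is simply connected, I would first construct a continuous lift $\hat u:\R\times[0,1]\to F$ with $g_1\circ\hat u=u$. After applying a local Hamiltonian perturbation as in Proposition \ref{prop:4.1} to push the image of $u$ off the fold locus of $g_1$, the map $g_1$ is a local diffeomorphism on an open neighbourhood of the image of $\hat u$, so such a lift exists and is unique once the lifts $\tilde x_{\pm}\in F$ of the endpoints $x_{\pm}$ are fixed. In the pulled-back complex structure $g_1^*J_1$ on $F$, the lift $\hat u$ is automatically holomorphic.

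\textbf{Step 2 (Weinstein chart).} Next I would apply the Weinstein tubular neighbourhood theorem (Theorem \ref{thm:tubu}) to identify a neighbourhood of $F\subset(F_1\times F_2,\omega_1\times(-\omega_2))$ with a neighbourhood of the zero section of $T^*F$, and look for the desired $\tilde u$ in the form $\tilde u(s,t)=(\hat u(s,t),\xi(s,t))$ with $\xi(s,t)\in T^*_{\hat u(s,t)}F$ small. The seam condition $\tilde u(s,1)\in F$ becomes $\xi(s,1)\equiv 0$. Because $L_1\times L_2$ is transverse to $F$, near each of the finitely many intersection points corresponding to $\tilde x_{\pm}$ the Lagrangian $L_1\times L_2$ is represented in $T^*F$ as the graph of a closed one-form, and the remaining boundary condition $\tilde u(s,0)\in L_1\times L_2$ becomes a totally-real boundary condition on $\xi$ at $t=0$.

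\textbf{Step 3 (Implicit function theorem).} I would then view the Cauchy--Riemann equation for $\tilde u$ as an equation $\mathcal F(\xi)=0$ for a section $\xi$ of $\hat u^*T^*F$ with the above boundary conditions, with the natural approximate solution $\xi\equiv 0$ corresponding to the non-holomorphic map $(g_1\circ\hat u, g_2\circ\hat u)$. Following Wehrheim--Woodward, I would introduce a rescaling of the $T^*F$-direction (the analogue of shrinking the second strip-width to zero) that drives the error towards zero, show that the linearized operator is uniformly surjective using the regularity result of Section \ref{sec:3}, and apply the Banach space implicit function theorem to obtain a genuine solution $\tilde u$ with the prescribed asymptotics $\tilde x_{\pm}$.

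The main obstacle is Step 3: because $F$ is Lagrangian and therefore cannot simultaneously be a $(J_1\times(-J_2))$-complex submanifold, the naive approximate solution carries an order-one error term, and the uniform invertibility of the linearized operator on the rescaled problem requires delicate weighted Sobolev estimates, which is the analytic core of \cite{wehrheim2009floer}. The specialization to closed surfaces is essential here: the 2-dimensional conformal structure allows one to compare the pulled-back complex structures $g_1^*J_1$ and $g_2^*J_2$ on $F$ explicitly, which is what makes the modification of the Wehrheim--Woodward argument tractable.
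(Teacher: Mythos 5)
Your overall strategy (approximate solution plus quantitative implicit function theorem, following Wehrheim--Woodward) is the right family of ideas, but the specific route you propose has two genuine gaps, and the second one is fatal as written. First, Step 1: the hypothesis that the image of $u$ lies in $g_1(F)$ does not give you a lift $\hat u:\R\times[0,1]\to F$ of the whole strip. The map $g_1$ has fold singularities, so it is not a covering onto its image and local lifting fails along the fold locus; simple connectivity of the strip does not help there. Invoking Proposition \ref{prop:4.1} to ``push the image of $u$ off the fold locus'' misreads that proposition: it perturbs $F$ so that an induced perturbation on $F\circ L_2$ is realized, it does not move the image of a fixed holomorphic map, and after perturbing $F$ you have changed the boundary condition $F\circ L_2$ and lost the hypothesis that $u$ is holomorphic with boundary on it. The paper never lifts the whole strip: it only lifts the boundary curve $u|_{t=0}$ into $(L_1\times F)^T$, and that lift exists by definition of ``has its boundary in'' (Definition \ref{def:lagbd}), which is all the construction needs.

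Second, and more seriously, Step 3 has no small parameter. Your approximate solution $\xi\equiv 0$, i.e.\ $(g_1\circ\hat u,g_2\circ\hat u)$, fails the Cauchy--Riemann equation by an amount of order one (as you acknowledge), and Theorem \ref{thm:mcdu} requires $\|\mathcal F(x_1)\|$ to be small relative to the inverse bound; a rescaling of the $T^*F$-fibre direction is not shown to shrink this error, and there is no reason it should, since the defect of $g_2\circ\hat u$ from holomorphicity is independent of the fibre scale. The paper's proof resolves exactly this point differently: the small parameter is the width $\delta$ of an auxiliary strip. One takes the genuinely holomorphic strip, extends it and the boundary lift \emph{constantly} across a strip of width $\delta$ (the space $\tilde{\mathcal X}^Q_{\bar\delta}$ of Definition \ref{def:quiltbdd}), corrects the exponential map by the quadratic maps $Q_s$, $Q^2_{s,t}$ so that it preserves the Lagrangian boundary conditions, and then proves $\|\mathcal F_u(0)\|_{\Omega_{1,\delta}}\le C\delta^{1/4}$ (Lemma \ref{lem:linearest}) together with a right inverse for $d\mathcal F_u(0)$ bounded uniformly in $\delta$; only then does Theorem \ref{thm:mcdu} apply, and the resulting solution is converted back to a quilt of standard widths via the identification $\mathcal X^Q_\delta\cong\mathcal X^Q$. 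Without an analogue of the constant extension and the $\delta^{1/4}$ estimate, your Newton/IFT step cannot start, so the analytic core of the theorem is missing from the proposal rather than merely deferred.
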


\begin{proof}
The proof is adopted from Wehrheim and Woodward \cite{wehrheim2009floer}. In \cite{wehrheim2009floer}, the the moduli spaces are builted on Lagrangian embeddings. However, the map they construct between the moduli spaces can be generalized to Lagrangian immersions in dimension 2.

Recall that the generators of $\CF(L_1\circ F,L_2;F_2)$ and $\CF^Q(L_1,F,L_2;F_1,F_2)$ can be identified canonically by Proposition \ref{prop:sg}, so we do not distinguish them. \textbf{The first goal} is to construct a map between the moduli spaces of $\CF(L_1\circ F,L_2;F_2)$ and $\CF^Q(L_1,F,L_2;F_1,F_2)$:
\[
\mathcal{M}(x_+,x_-)\rightarrow \mathcal{M}^Q(x_+,x_-),
\]
where $x_\pm$ are the generators of $\CF(L_1\circ F,L_2;F_2)$ and $\CF^Q(L_1,F,L_2;F_1,F_2)$.\\

\textbf{To achieve this goal}, it is necessary to alter the space from $\mathcal{X}^Q(x_+,x_-)$ to $\tilde{\mathcal{X}}^Q(x_+,x_-)$ (Definition \ref{def:quiltbdd}). Denote $\mathcal{X}^Q_\delta(x_+,x_-)$ as the quilted holomorphic stripe whose top strip has width $\delta$. In this notation, the observation is that $\mathcal{X}^Q(x_+,x_-)$ can be identified with $\mathcal{X}^Q_\delta(x_+,x_-)$ for $\delta\in(0,1]$ and
\[
\mathcal{X}^Q(x_+,x_-)=\mathcal{X}^Q_1(x_+,x_-).
\]
%add figure of length $\delta$ stripe.
\begin{defn}\label{def:quiltbdd}
    Let $\tilde{\mathcal{X}}^Q(x_+,x_-)$ be the space of tuples of $J$-holomorphic maps
    \[
    (u_2,u_2',u_1,u_1')\rightarrow F_2\times F_2\times F_1\times F_1
    \]
    with the following boundary conditions:
    \begin{itemize}
        \item $(u_2,u_2')|_{t=0}$ and $(u_1,u_1')|_{t=0}$ have their boundaries in the diagonals $\Delta_{F_2}$ and $\Delta_{F_1}$, respectively.
        \item $u_2|_{t=1}$ has its boundary in $L_2$.
        \item $((u_2',u_1,u_1')|_{t=\delta})$ has its boundary in $(L_{1}\times F)^T$, where
        \[
        (L_{1}\times F)^T:=\{(x_2,x_1,x_1')\in F_2\times F_1\times F_1|(x_1,x_2,x_1')\in L_1\times F\}.
        \]
    \end{itemize}
\end{defn}

The spaces $\mathcal{X}^Q_\delta(x_+,x_-)$ and $\tilde{\mathcal{X}}^Q_{\bar{\delta}}(x_+,x_-)$ can be identified as follows for $\bar\delta=\frac{\delta}{2-\delta}$. Suppose $u=(u_2.u_2',u_1,u_1')\in \tilde{\mathcal{X}}^Q_{\bar{\delta}}(x_+,x_-)$, define
\begin{equation}\label{equ:strcor1}
v_1(s,t)=\left\{
\begin{aligned}
    &v_1((1+\bar\delta)s,\bar\delta-(1+\bar\delta)t)\quad 0\le t\le\frac12\delta,\\
    &v_1'((1+\bar\delta)s,(1+\bar\delta)t-\bar\delta)\quad \frac12\delta\le t\le\delta,
\end{aligned}
     \right.
\end{equation}
\begin{equation}\label{equ:strcor2}
v_2(s,t)=\left\{
\begin{aligned}
    &v_2'((1+\bar\delta)s,\bar\delta-(1+\bar\delta)t)\quad 0\le t\le\frac12\delta,\\
    &v_2((1+\bar\delta)s,(1+\bar\delta)t-\bar\delta)\quad \frac12\delta\le t\le1.
\end{aligned}
     \right.
\end{equation}   
Then $v=(v_1,v_2)\in \mathcal{X}^Q_\delta(x_+,x_-)$.

\textbf{The idea of constructing elements from $\mathcal{X}(x_+,x_-)$ to $\tilde{\mathcal{X}}^Q_{\delta}(x_+,x_-)$ is described below}. Suppose $u_2(s,t)\in \mathcal{X}(x_+,x_-)$, 
\[
u_2':\R\times [0,\delta]\rightarrow F_2
\]
is defined as the constant extension of $u_2|_{t=0}$ along $[0,\delta]$. Notice that $u_2|_{t=0}$ has its boundary in $L_1\circ F$, there is a lift of $u_2|_{t=0}$ to 
\[
(u_2|_{t=0},\bar{u}_1,\bar{u}_1):\R\rightarrow (L_1\times F)^T.
\]
Set 
\[
u_1=u_1':\R\times [0,\delta]\rightarrow (L_1\times F)^T
\]
as the constant extension of $\bar u_1$ along $[0,\delta]$. Then $u^\delta=(u_2,u_2',u_1,u_1')$ has the same boundary condition as elements in $\tilde{\mathcal{X}}^Q_{\delta}(x_+,x_-)$. We call $u^\delta$ as the {\bf constant extension of $\mathbf u_2$}. To make $u^\delta$ $J$-holomorphic, \textbf{we need to construct some exponential map along some vector field such that the composition of the exponential map with $u$ preserves the Lagrangian boundary condition and the resulting map is $J$-holomorphic}.

Fix $u_2\in \mathcal{X}(x_+,x_-)$, let $u^\delta=(u_2,u_2',u_1,u_1')$ be the constant extension of $u_2$. Consider a section $\eta=(\eta_2,\eta_2',\eta_1,\eta_1')$ of the pullback bundle $(u^\delta)^*T(F_2\times F_1\times F_1)$ with the following boundary conditions:
\begin{equation}\label{equ:bddcon}
    \begin{aligned}
        (\eta_2,\eta_2')|_{t=0}\in \Gamma((u_2,u_2')|_{t+0}^*T\Delta_{F_2}),\quad (\eta_1,\eta_1')|_{t=0}\in \Gamma((u_1,u_1')|_{t=0}^*T\Delta_{F_1})\\
        (\eta_2',\eta_1,\eta_1')|_{t=\delta}\in \Gamma((u_2',u_1,u_1')|_{t=\delta}^*T(L_1\times F)^T), \quad \eta_2|_{t=1}\in \Gamma((u_2)|_{t=1}^*TL_2).
        \end{aligned}
\end{equation}

\textbf{There are two problems to construct an exponential map along $\eta$}. The first problem is that the exponential map is not defined on vector fields over immersed surfaces. The second problem is that exponential map does not preserve the Lagrangian boundary conditions.

\textbf{To solve the first problem}, we can define exponential map locally then the global exponential map can be glued from the local ones by the uniqueness of the exponential maps. Let $U$ be a small enough open neighbourhood of the base manifold of the bundle $u^*T(F_2\times F_2\times F_1\times F_1)$ such that $u^*T(F_2\times F_2\times F_1\times F_1)|_U$ is isomorphic to $T(F_2\times F_2\times F_1\times F_1)|_{u(U)}$. Then $\eta|_{U}$ can be regarded as a section of $T(F_2\times F_2\times F_1\times F_1)|_{u(U)}$. Denote the push forward vector field as $\eta|_{u(U)}$. Then the exponential map over $u(U)$ along $\eta|_{u(U)}$ is well defined. Therefore we have a well defined map $\exp_{u(U)}(\eta|_{u(U)})$ for each small enough $U$. Since the exponential map is unique, these locally defined exponential maps can be glued together to get a global immersion, denoted as $\exp_{u}(\eta)$.

\textbf{However, $\exp_{u}(\eta)$ does not satisfy the Lagrangian boundary condition in Definition \ref{def:quiltbdd}. To fix this}, the following lemma is needed.

\begin{lem}
    Given $u_2\in \mathcal{X}(x_+,x_-)$, let $u^\delta=(u_2,u_2',u_1,u_1')$ be the constant extension of $u_2$. There exist $\epsilon>0$ and smooth families of maps
    \begin{align*}
        Q_s:(u_2',u_1,u_1')^*(s,0)T(F_2\times F_1\times F_1)\supset B_\epsilon&\rightarrow(u_2',u_1,u_1')^*(s,0)T(F_2\times F_1\times F_1)\quad \forall s\in\R,\\
        Q^2_{s,t}:u_2^*(s,t)TF_2\supset B_\epsilon^2&\rightarrow u_2^*(s,t)TF_2\quad \forall (s,t)\in\R\times[0,1],
    \end{align*}
that are diffeomorphisms onto their image, where $B_\epsilon$ and $B^2_\epsilon$ are $\epsilon$ neighbourhoods of the origin in the tangent space at a given point. Moreover, $Q_s$ and $Q^2_{s,t}$ have the following properties:
\begin{itemize}
    \item $Q_s(0)=0$, $dQ_s(0)=0$,$Q_{s,t}^2(0)=0$, $dQ_{s,t}^2(0)=0$. In particular, there is a constant $C_Q$ such that for all $(\eta_2',\eta_1,\eta_1')\in B_\epsilon$ and $\eta_2\in B_\epsilon^2$
    \[
    |Q_s(\eta_2',\eta_1,\eta_1')|\le C_Q|(\eta_2',\eta_1,\eta_1')|^2,\quad |Q_{s,t}^2(\eta_2)|\le C_Q|\eta_2|^2.
    \]
    \item $\exp_{(u_2',u_1,u_1')(s,0)}\circ(1+Q_s)$ maps $(u_2',u_1,u_1')^*(s,0)T(L_1\times F)^T\cap B_\epsilon$ to $(L_1\times F)^T$.
    \item $\exp_{(u_2',u_1,u_1')(s,0)}\circ(1+Q_s)$ maps $(u_2',u_1,u_1')^*(s,0)T(F_2\times\Delta_{F_1})\cap B_\epsilon$ to $F_2\times\Delta_{F_1}$.
    \item $\exp_{u_2(s,1)}\circ(1+Q^2_{s,1})$ maps $u_2^*(s,1)TL_2\cap B_\epsilon^2$ to $L_2$.
    \item Restricting $Q_s$ to $(u_2',u_1,u_1')^*(s,0)T(F_2\times\Delta_{F_1})$ and composing it with the projection $Pr_2:(u_2',u_1,u_1')^*(s,0)T(F_2\times F_1\times F_1)\rightarrow u_2'^*TF_2$ yields a map that is independent of the $(u_1,u_1')^*(s,0)T\Delta_{F_1}$-component. The resulting family
    \[
    Q^2_s:u_2^*(s,0)TF_2\supset B_\epsilon^2\rightarrow u_2^*(s,0)TF_2
    \]
    coincides with $Q^2_{s,0}$.
\end{itemize}
\end{lem}

Notice that these maps are constructed fiberwisely in \cite{wehrheim2009floer}. So the same construction applies when we pulling back everything to the stripes using Weinstein's tubular neighbourhood theorem.

\begin{proof}
    We first construct the map $Q_s$. This map is constructed at every point $s$. First assume $s$ is fixed. Let $\exp_{(u_2',u_1,u_1')(s)}$ be the exponential map defined at $T_{(u_2',u_1,u_1')(s)}F_2\times F_1\times F_1$. Consider the restriction of $\exp_{(u_2',u_1,u_1')(s)}$ to a geodesic ball around the origin. Define
    \[
    \mathcal{L}:=\exp^{-1}_{(u_2',u_1,u_1')(s)}(L_1\times F)^T\ \  \mathrm{(Definition}\ \ref{def:quiltbdd}\mathrm{)}.
    \]
    This is a subset in $X:=T_{(u_2',u_1,u_1')(s)}F_2\times F_1\times F_1$. By assumption, $(L_1\times F)^T$ intersects $\Delta:=T_{(u_2',u_1,u_1')(s)}F_2\times \Delta_{F_1}$ transversely. Then one can assume that the geodesic ball is chosen small enough such that $\mathcal{L}$ intersects $\Delta$ transversely. So this intersection set $\hat{\mathcal{L}}_{12}$ is a smooth manifold and is diffeomorphic to $\mathcal{L}_{12}:=\exp^{-1}_{u_2'(s)}(L_1\circ F)\subset T_{u_2'(s)}F_2$. Thus 
    \[
    \dim{\mathcal{L}}_{12}=\dim \hat{\mathcal{L}}_{12}=1.
    \]
    If $\dim(\mathcal{L}\cap (\{0\}\times T_{(u_1,u_1')(s)}\Delta_{F_1}))\ne0$, then $L_1$ and $F$ are not composable, which is a contradiction. Therefore the following is true.
    \[
    \Delta=T_{u_2'(s)}F_2\times T_{(u_1,u_1')(s)}\Delta_{F_1}=T_{0}\hat{\mathcal{L}}_{12}\oplus((T_{0}\mathcal{L}_{12})^\perp\times\{0\})\oplus(\{0\}\times T_{(u_1,u_1')(s)}\Delta_{F_1}).
    \]
    Moreover, $\hat{\mathcal{L}}_{12}$ can be written as the graph of a map $\psi$ over a sufficiently small $\epsilon$-ball,
    \[
    \psi=(\psi_2^\perp,\psi_{11}):T_{0}\hat{\mathcal{L}}_{12}\supset B_\epsilon\rightarrow(T_{0}\mathcal{L}_{12})^\perp\times(T_{(u_1,u_1')(s)}\Delta_{F_1})
    \]
    with $\psi(0)=0$ and $d\psi(0)=0$.
    Let $C$ be a complement of $T_{0}\hat{\mathcal{L}}_{12}\subset T_{0}\mathcal{L}$, thus
    \[
    X:=T_{0}\mathcal{L}+\Delta=C\oplus T_{0}\hat{\mathcal{L}}_{12}\oplus((T_{0}\mathcal{L}_{12})^\perp\times\{0\})\oplus(\{0\}\times T_{(u_1,u_1')(s)}\Delta_{F_1}).
    \]
    Under this decomposition, an element $x\in X$ can be written as 
    \begin{equation}\label{equ:vecdecomp}
        x=x_C+x_{12}+(x_{12}^\perp,0)+(0,x_{11}).
    \end{equation}
    Define $\Psi:X\supset B_\epsilon\rightarrow X$ as
    \begin{align*}
        \Psi(x):=&x+(\psi^\perp_{12}(x_{12}),0)+(0,\psi_{11}(x_{12}))\\
        =&x_c+x_{12}+(x_{12}^\perp+\psi^\perp_{12}(x_{12}),0)+(0,x_{11}+\psi_{11}(x_{12})).
    \end{align*}
    Then $\Psi(0)=0$ and $d\Psi(0)=id$. Since 
    \[
    \Psi(x_{12})=x_{12}+(\psi^\perp_{12}(x_{12}),0)+(0,\psi_{11}(x_{12})),
    \]
    it can be concluded that $\Psi(T_{0}\hat{\mathcal{L}}_{12})=\hat{\mathcal{L}}_{12}$. These two facts show that 
    \[
    T_{0}(\Psi^{-1}(\hat{\mathcal{L}}_{12}))=d\Psi(0)^{-1}T_{0}\hat{\mathcal{L}}_{12}=T_{0}\hat{\mathcal{L}}_{12}.
    \]
    So $\Psi^{-1}(\hat{\mathcal{L}}_{12})$ can be represented as the graph of a map
    \[
    \varphi=(\varphi^\perp_{12},\varphi_{11}):T_{0}\hat{\mathcal{L}}_{12}\supset B_\epsilon\rightarrow (T_{0}\mathcal{L})^\perp\times(T_{(u_1,u_1')(s)}\Delta_{F_1})
    \]
    with $\varphi(0)=0$, $d\varphi(0)=0$, and $\varphi|_{T_{u_2'(s)}\hat{\mathcal{L}}_{12}}=\psi$.\\
    Using the decomposition in Equation (\ref{equ:vecdecomp}), $Q_s$ can be defined as
    \begin{equation*}
         Q_s(x)=(\varphi_{12}^\perp(x_C+x_{12}),0)+(0,\varphi_{11}(x_C+x_{12}))).
    \end{equation*}
    Then
    \begin{equation*}
        (1+Q_s)(x_C+x_{12})=x_C+x_{12}+(\varphi_{12}^\perp(x_C+x_{12}),0)+(0,\varphi_{11}(x_C+x_{12})))\in\mathcal{L}.
    \end{equation*}
    Moreover,
    \[
    (1+Q_s)(x_{12}+x_{12}^\perp+x_{11})=x_{12}+x_{12}^\perp+x_{11}+(\psi_{12}^\perp(x_{12}),0)+(0,\psi_{11}(x_{12})))\in \Delta.
    \]
    Therefore, the second and the third conditions are satisfied.\\
    Finally one can define $Q^2_{s,t}$. The map $Q^2_{s,1}$ is defined as first restrict $Q_s$ on $F_2\times\{0\}$ then project to the $F_2$ component. To define $Q^2_{s,0}$, notice that 
    \[
    T_{u_2(s,0)}F_2=T_{u_2(s,0)}(L_1\circ F)\oplus T_{u_2(s,0)}(L_1\circ F)^\perp.
    \]
    So as before, the inverse image of $L_1\circ F$ under the exponential map in $T_{u_2(s,0)}F_2$ can be represented by the graph of a map
    \[
    \phi:T_{u_2(s,0)}(L_1\circ F)\rightarrow T_{u_2(s,0)}(L_1\circ F)^\perp.
    \]
    Let $(x_2,x^\perp_2)\in T_{u_2(s,0)}(L_1\circ F)\oplus T_{u_2(s,0)}(L_1\circ F)^\perp$,
    \[
    Q^2_{s,0}(x_2,x^\perp_2)=(0,\phi(x_2)).
    \]
    Set $Q^2_{s,t}=(1-t)Q^2_{s,0}+tQ^2_{s}$, the last two conditions are satisfied.
\end{proof}

Denote $e_{u_2}$ and $e_{(u_2',u_1,u_1')}$ as $\exp_{u_2}\circ(1+Q^2)$ and $\exp_{(u_2',u_1,u_1')}\circ(1+Q)$, respectively. Let $e_u:=(e_{u_2},e_{(u_2',u_1,u_1')})$ Because $e_u(0)=u$, then $de_u(0)$ maps $u^*T(F_2\times F_2\times F_1\times F_1)$ to $T(F_2\times F_2\times F_1\times F_1)|_{\im u}$. By identifying the range of $de_u(0)$ with $u^*T(F_2\times F_2\times F_1\times F_1)$, we get the map
\[
de_u^*(0):u^*T(F_2\times F_2\times F_1\times F_1)\rightarrow u^*T(F_2\times F_2\times F_1\times F_1),
\]
which is identity.\\

\textbf{The next goal} is to find $\eta=(\eta_2,\eta_2',\eta_1,\eta_1')$ such that $e_u(\eta)$ is $J$-holomorphic. However, the domain and the image of the map $\bar{\partial}_Je_u(\eta)$ are different. The solution is to apply the parallel transport defined described as follows. Let $\Phi_{u_2(s,t)}$ be the parallel transport on $T(F_2)$ along the path $\tau\mapsto e_{u_2(s,t)}(\tau\eta_2)$ using the Levi-Civita connection and $\Phi_{u_2(s,t)}$ be the parallel transport on $T(F_2\times F_1\times F_1)$ along the path $\tau\mapsto e_{(u_2',u_1,u_1')(s,t)}(\tau(\eta_2',\eta_1,\eta_1')))$ using the Hermitian connection compatible with the $J$-holomorphic structure. Then with fixed $(s,t)$ one can identify $T_{u(s,t)}(F_2\times F_2\times F_1\times F_1)$ with $T_{e_{u(s,t)}(\eta)}(F_2\times F_2\times F_1\times F_1)$. Applying this identification followed with the \textbf{identification map $I$ induced by the bundle pull back using $u$}, we have a map defined for all $(s,t)$ with fixing target:
\begin{align*}
    \mathcal{F}_u:u^*T(F_2\times F_2\times F_1\times F_1)&\rightarrow u^*T(F_2\times F_2\times F_1\times F_1)\\
    \mathcal{F}_u(\eta_2,\eta_2',\eta_1,\eta_1')=I(\Phi_{u_2}(\eta_2)^{-1}(\bar{\partial}_{J_2}e_{u_2}(\eta_2)),&\Phi_{(u_2',u_1,u_1')}(\eta_2',\eta_1,\eta_1')^{-1}(\bar{\partial}_{(J_2,J_1,J_1)}e_{u_2}(\eta_2',\eta_1,\eta_1')),
\end{align*}
where $J_1$ and $J_2$ are $J$-holomorphic structures on $F_1$ and $F_2$ respectively. \textbf{Then $e_u(\eta)$ is $J$-holomorphic if and only if $\mathcal{F}_u(\eta)=0$}.\\

To show that there is $\eta$ such that $\mathcal{F}_u(\eta)=0$, \textbf{the strategy is to apply the implicit function theorem for functional spaces. The followings first give the necessary definitions of functional spaces and then prove some inequalities}.

Next {we need to set up functional spaces and extend $\mathcal{F}_u$ onto these spaces}. For a positive integer $k$, let $H^k_{1,\delta}$ be the space of tuples $\eta:=(\eta_2,\eta_2',\eta_1,\eta_1')$, where
\[
\eta_2\in H^k(\R\times[0,1],u^*_2TF_2),\quad \eta_2'\in H^k(\R\times[0,\delta],(u_2')^*TF_2),\quad \eta_1,\eta_1'\in H^k(\R\times[0,1],(u_1')^*TF_1).
\]
The norm of $\eta$ on $H^k_{1,\delta}$ is defined as 
\[
\|(\eta_2,\eta_2',\eta_1,\eta_1')\|_{H^k_{1,\delta}}^2=\|\eta_2\|_{H^k(\R\times[0,1])}^2+\|\eta_2'\|_{H^k(\R\times[0,\delta])}^2+\|\eta_1\|_{H^k(\R\times[0,\delta])}^2+\|\eta_1'\|_{H^k(\R\times[0,\delta])}^2.
\]
Let $\Gamma_{1,\delta}$ be the space of $\eta\in H^2_{1,\delta}$ satisfy the following conditions:
\begin{itemize}
    \item $(\eta_2,\eta_2')$ has its $t=0$ boundary in $(u_2',u_2')^*T\Delta_{F_2}$,
    \item $(\eta_1,\eta_1')$ has its $t=0$ boundary in $(u_1',u_1')^*T\Delta_{F_1}$,
    \item $(\eta_2',\eta_1,\eta_1')$ has its $t=\delta$ boundary in $(u_2',u_1,u_1')^*T(L_1\times F)^T$,
    \item $\eta_2$ has its $t=1$ boundary in $u_2^*TL_2$.
\end{itemize}
The norm equipped on $\Gamma_{1,\delta}$ is defined as
\[
\|\eta\|_{\Gamma_{1,\delta}}=\|\eta\|_{H^2_{1,\delta}}+\|\nabla\eta\|_{L^4_{1,\delta}}:=\|\eta\|_{H^2_{1,\delta}}+(\|\nabla\eta_2\|^4_{L^4({\R\times[0,1]})}+\|\nabla(\eta_2',\eta_1,\eta_1'))\|^4_{L^4{(\R\times[0,\delta]})})^\frac14.
\]
Denote the $\epsilon$-ball in $\Gamma_{1,\delta}$ by
\[
\Gamma_{1,\delta}(\epsilon)=\{\eta\in \Gamma_{1,\delta}\||\eta\|_{\Gamma_{1,\delta}}<\epsilon\}.
\]
Let $\Omega_{1,\delta}=H^1_{1,\delta}$ equipped with the norm
\[
\|\eta\|_{\Omega_{1,\delta}}=\|\eta\|_{H^1_{1,\delta}}+\|\eta\|_{L^4_{1,\delta}}.
\]
Therefore, with small enough $\eta$, the map $\mathcal{F}_u$ can be extended to
\[
\mathcal{F}_u:\Gamma_{1,\delta}(\epsilon)\rightarrow \Omega_{1,\delta}.
\]

Given a $J$-holomorphic strip in $F_2$, the corresponding maps $u_2',u_1,u_1'$ are constant in $[0,\delta]$ direction. Therefore, one can identify $(u_2',u_1,u_1')$ with $(u_2',u_1,u_1')|_{\R\times\{0\}}$. Define
\begin{align*}
    \pi^\perp_{211}:C^\infty(\R,(u_2',u_1,u_1')^*T(F_2\times F_1\times F_1))&\rightarrow C^\infty(\R,(u_2',u_1,u_1')^*T((L_1\times F)^T)^\perp),\\
    \pi_{2}:C^\infty(\R,u_2'^*TF_2)&\rightarrow C^\infty(\R,u_2'^*T(L_1\circ F)),\\
    \pi_{2}^\perp:C^\infty(\R,u_2'^*TF_2)&\rightarrow C^\infty(\R,u_2'^*T(L_1\circ F)^\perp)
\end{align*}
as the projections to the corresponding spaces.

\begin{lem}\label{Lem:3.1.3}
    There exists a constant $C$ such that the following holds.
    \begin{itemize}
        \item [$($a$)$] For every $(x_2,x_1,x_1')\in F_2\times F_1\times F_1$
        \[
        d(x_2,L_1\circ F)\le C(d((x_2,x_1,x_1'),(L_1\circ F)^T)+d(x_1,x_1')).
        \]
        \item [$($b$)$] For every $s\in\R$ and $(\eta_2',\eta_1,\eta_1')\in (u_2',u_2,u_1')^*(s)T(F_2\times F_1\times F_1)$
        \begin{align*}
            |(\eta_2',\eta_1,\eta_1)|\le C(|\pi_2\eta_2'|+|\eta_1-\eta_1'|&+|\pi_{211}^\perp(\eta_2',\eta_1,\eta_1')|,\\
            |\pi_2^\perp\eta_2'|\le C(|\pi_{211}^\perp(\eta_2',\eta_1,\eta_1')|&+|\eta_1-\eta_1'|).
        \end{align*}
        \item [$($c$)$] For every $(\eta_2',\eta_1,\eta_1')\in C^\infty(\R,(u_2',u_2,u_1')^*(s)T(F_2\times F_1\times F_1))$
        \[
        \|(\eta_2',\eta_1,\eta_1')\|_{H^1(\R)}\le C(\|\pi_2\eta_2'\|_{H^1(\R)}+\|\eta_1-\eta_1'\|_{H^1(\R)}+\|\pi_{211}^\perp(\eta_2',\eta_1,\eta_1)\|_{H^1(\R)}),
        \]
        and the same holds with $H^1$ replaced by $C^1$ or $L^p$ for any $p\ge1$. Moreover,
        \begin{align*}
            \|\pi_2^\perp\eta_2'\|_{L^2(\R)}\le C(\|\pi_2^\perp(\eta_2',\eta_1,\eta_1')&\|_{L^2(\R)}+\|\eta_1-\eta_1'\|_{L^2(\R)}),\\
            \|\pi_2^\perp\eta_2'\|_{H^1(\R)}\le C(\|\pi_2^\perp(\eta_2',\eta_1,\eta_1')\|_{H^1(\R)}+&\|\eta_1-\eta_1'\|_{H^1(\R)}+\||\partial_s(u_2',u_2,u_1')|\cdot|(\eta_2',\eta_1,\eta_1')|\|).
        \end{align*}
    \end{itemize}
\end{lem}

\begin{proof}
    Suppose the metric on $F_2\times F_1\times F_1$ is the product metric induced by the given metrics on $F_1$ and $F_2$. For $($a$)$, notice that 
    \begin{align*}
       d(x_2,L_1\circ F)\le&d((x_2,x_1,x_1'),(L_1\times F)^T\cap (F_2\times\Delta_{F_1})) \\
       \le&C(d((x_2,x_1,x_1'),(L_1\times F)^T)+d((x_2,x_1,x_1'),F_2\times\Delta_{F_1})).
    \end{align*}
    For $($b$)$, because $L_1\circ F$ is a Lagrangian immersion,
    \begin{align*}
        &(u_2',u_1,u_1')^*(s)T(F_2\times F_1\times F_1)\\
        =&(u_2',u_1,u_1')^*(s)T(L_1\circ F)\oplus(u_2',u_1,u_1')^*(s)T(L_1\circ F)^\perp\\
        =&(u_2',u_1,u_1')^*(s)T(L_1\circ F)\oplus(u_2',u_1,u_1')^*(s)T(\{0\}\times(T\Delta_{F_1})^\perp)\oplus(u_2',u_1,u_1')^*(s)T(L_1\times F)^\perp).
    \end{align*}
    This isomorphism gives the estimate of the norms on $(\eta_2',\eta_1,\eta_1')\in (u_2',u_1,u_1')^*(s)T(F_2\times F_1\times F_1)$
    \[
    |(\eta_2',\eta_1,\eta_1')|\le C(|\pi_2\eta_2'|+|\eta_1-\eta_1'|+|\pi_{211}^\perp(\eta_2',\eta_1,\eta_1')|.
    \]
    Since the composition of the maps 
    \[
    (\{0\}\times(T\Delta_{F_1})^\perp)\oplus T(L_1\times F)^\perp\rightarrow T(L_1\circ F)\oplus T(L_1\circ F)^\perp=TF_2\rightarrow T(L_1\circ F)^\perp
    \]
    %need to fix
    is bounded. So after pulling back the the boundaries of the strip, one has the following estimate
    \[
    |\pi_2^\perp\eta_2'|\le C(|\pi_{211}^\perp(\eta_2',\eta_1,\eta_1')|+|\eta_1-\eta_1'|).
    \]
    For $($c$)$,
    \[
    \|\pi_2^\perp\eta_2'\|_{L^2(\R)}\le C(\|\pi_{211}^\perp(\eta_2',\eta_1,\eta_1')\|_{L^2(\R)}+\|\eta_1-\eta_1'\|_{L^2(\R)})
    \]
    is justified by integrating the second inequality of $($b$)$.
    The same method applies to the first inequality of $($b$)$ shows that
    \begin{equation}\label{ineq:1}
        \|(\eta_2',\eta_1,\eta_1)\|_{L^p(\R)}\le C(\|\pi_2\eta_2'\|_{L^p(\R)}+\|\eta_1-\eta_1'\|_{L^p(\R)}+\|\pi_{211}^\perp(\eta_2',\eta_1,\eta_1')\|_{L^p(\R)}
    \end{equation}
    Since we need to estimate the $H^1$ norm, the inequalities in $($b$)$ can be applied to $\nabla_s(\eta_2',\eta_1,\eta_1)(s)$ to get
    \begin{align}\label{ineq:2}
      |\nabla_s(\eta_2',\eta_1,\eta_1)|\le C(|\pi_2(\nabla_s\eta_2')|+|\nabla_s\eta_1-\nabla_s\eta_1'|&+|\pi_{211}^\perp(\nabla_s(\eta_2',\eta_1,\eta_1'))|,\\
            |\pi_2^\perp(\nabla_s\eta_2')|\le C(|\pi_{211}^\perp(\nabla_s(\eta_2',\eta_1,\eta_1'))|&+|\nabla_s\eta_1-\nabla_s\eta_1'|).  
    \end{align}
    The following inequalities are needed
    \begin{align*}
        &|\pi_2(\nabla_s\eta_2')|\le C(|\nabla_s(\pi_2(\eta_2'))|+|(\eta_2',\eta_1,\eta_1)|),\\
        |\pi_{211}^\perp(\nabla_s(\eta_2',\eta_1,\eta_1'))|\le &C(|\nabla_s(\pi_{211}^\perp(\eta_2',\eta_1,\eta_1'))|+|\partial_s(u_2',u_1,u_1')|\cdot|(\eta_2',\eta_1,\eta_1')|),\\
        |\nabla_s(\pi_2^\perp(\eta_2'))&|\le C(|\pi_2^\perp(\nabla_s\eta_2')|+|\partial_s(u_2',u_1,u_1')|\cdot|(\eta_2',\eta_1,\eta_1')|).
    \end{align*}
    The proof of these three inequalities are similar, we only prove the first one. There is an orthonormal frame given by $\gamma_1(s)\in(u_2')^*T(L_1\circ F)$ and $\gamma_2(s)\in(u_2')^*T(L_1\circ F)^\perp$. Suppose in this coordinate, $\eta=\sum\lambda^i\gamma_i$. Therefore
    \begin{align*}
        |\pi_2(\nabla_s\eta_2')-\nabla_s(\pi_2(\eta_2'))|=&|\lambda^1\pi_{2}(\nabla_s\gamma_1)+\lambda^2(\pi_{2}(\nabla_s\gamma_2)-\nabla_s\gamma_2)|\\
        \le &C|\partial_su_2'|\cdot|\eta_2'|.
    \end{align*}
    Notice that $|\partial_su_2'|$ is bounded, this implies that 
    \[
    |\pi_2(\nabla_s\eta_2')|\le C(|\nabla_s(\pi_2(\eta_2'))|+|(\eta_2',\eta_1,\eta_1)|).
    \]
    After integrating Equation (\ref{ineq:2}) with $p$-norm and applying the inequalities just proved,
    \begin{align*}
        \|\nabla_s(\eta_2',\eta_1,\eta_1)\|_{L^p(\R)}\le &C(\|\nabla_s(\pi_2(\eta_2'))\|_{L^p(\R)}+\|\nabla_s\eta_1-\nabla_s\eta_1'\|_{L^p(\R)}\\
        +&\|\nabla_s(\pi_{211}^\perp(\eta_2',\eta_1,\eta_1'))\|_{L^p(\R)}+\|(\eta_2',\eta_1,\eta_1')\|_{L^p(\R)}).
    \end{align*}
    Combine this with Equation (\ref{ineq:1}) to get
    \begin{align*}
        \|(\eta_2',\eta_1,\eta_1')\|_{H^1(\R)}\le C(\|\pi_2\eta_2'\|_{H^1(\R)}+\|\eta_1-\eta_1'\|_{H^1(\R)}+\|\pi_{211}^\perp(\eta_2',\eta_1,\eta_1)\|_{H^1(\R)}).
    \end{align*}
    The last inequality is proved similarly.
\end{proof}

\begin{lem}\label{lem:sobest}
    There is a constant $C$ such that for all $\delta\in(0,1]$ and $\eta=(\eta_2,\eta_2',\eta_1,\eta_1')\in H^2_{1,\delta}$
    \begin{align*}
        &\|\eta_2\|_{C^0([0,1],H^1(\R))}+\|(\eta_2',\eta_1,\eta_1')\|_{C^0([0,\delta],H^1(\R))}\\
        \le &C(\|(\eta_2,\eta_2',\eta_1,\eta_1')\|_{H^2_{1,\delta}}+\|(\eta_2-\eta_2')|_{t=0}\|_{H^1(\R)}+\|(\eta_1-\eta_1')|_{t=0}\|_{H^1(\R)}+\|\pi_{211}^\perp(\eta_2',\eta_1,\eta_1')|_{t=\delta}\|_{H^1(\R)}).
    \end{align*}
    In particular, for all $p>2$ including $p=\infty$ and for $\eta\in\Gamma_{1,\delta}$ satisfying the boundary condition $(\ref{equ:bddcon})$,
    \[
    \|\eta_2\|_{L^p(\R\times[0,1])}+\|(\eta_2',\eta_1,\eta_1')\|_{L^p(\R\times[0,1])}\le C\|(\eta_2,\eta_2',\eta_1,\eta_1')\|_{H^2_{1,\delta}}.
    \]
\end{lem}

\begin{proof}
    The Sobolev embedding implies that $H^1(\R\times[0,1])\rightarrow L^p(\R\times[0,1]))$ for $p\ge2$ (Theorem 5.4 Case B \cite{adams2003sobolev}). Therefore the second inequality is a direct consequence of the first one. We only need to prove the first one. Suppose that on the contrary, there are sequences $\delta^\nu>0$ and $\eta^\nu=(\eta^\nu_2,\eta_2'^\nu,\eta_1^\nu,\eta_1'^\nu)\in H^2_{1,\delta^\nu}$ with 
    \[
    \|\eta^\nu_2\|_{C^0([0,1],H^1(\R))}+\|(\eta_2'^\nu,\eta_1^\nu,\eta_1'^\nu)\|_{C^0([0,\delta^\nu],H^1(\R))}=1
    \]
    but 
    \[
    \|(\eta_2^\nu,\eta_2'^\nu,\eta_1^\nu,\eta_1'^\nu)\|_{H^2_{1,\delta}}+\|(\eta_2^\nu-\eta_2'^\nu)|_{t=0}\|_{H^1(\R)}+\|(\eta_1^\nu-\eta_1'^\nu)|_{t=0}\|_{H^1(\R)}+\|\pi_{211}^\perp(\eta_2'^\nu,\eta_1^\nu,\eta_1'^\nu)|_{t=\delta}\|_{H^1(\R)}\rightarrow0.
    \]
By the Sobolev embedding
\[
H^2(\R\times[0,1])\subset H^1([0,1],H^1(\R))\hookrightarrow C^0([0,1],H^1(\R)),
\]
this implies $\|\eta^\nu_2\|_{C^0([0,1],H^1(\R))}\rightarrow0$, and so
\[
\|{\eta_2'}^\nu|_{t=0}\|_{H^1(\R)}\le\|{\eta_2}^\nu|_{t=0}\|_{H^1(\R)}+\|(\eta_2^\nu-\eta_2'^\nu)|_{t=0}\|_{H^1(\R)}\rightarrow0.
\]
Since
\begin{align*}
    &\|(\eta_2'^\nu,\eta_1^\nu,\eta_1'^\nu)|_{t=t_0}-(\eta_2'^\nu,\eta_1^\nu,\eta_1'^\nu)|_{t=\delta^\nu}\|^2_{H^1(\R)}\\
    \le &\delta^\nu\int_0^{\delta^\nu}\|\nabla_t(\eta_2'^\nu,\eta_1^\nu,\eta_1'^\nu)\|_{H^1(\R)}\\
    \le &\delta^\nu\|(\eta_2'^\nu,\eta_1^\nu,\eta_1'^\nu)\|^2_{H^2(\R\times[0,\delta
    ^\nu])}\rightarrow0, \quad \forall t_0\in[0,\delta^\nu],
\end{align*}
then the previous lemma shows that
\begin{align*}
    &\|(\eta_2'^\nu,\eta_1^\nu,\eta_1'^\nu)|_{t=\delta^\nu}\|^2_{H^1(\R)}\\
    \le &C(\|\pi_2\eta_2'^\nu|_{t=\delta^\nu}\|_{H^1(\R)}+\|(\eta_1^\nu-\eta_1'^\nu)|_{t=\delta^\nu}\|_{H^1(\R)}+\|\pi_{211}^\perp(\eta_2'^\nu,\eta_1^\nu,\eta_1'^\nu)|_{t=\delta^\nu}\|_{H^1(\R)})\\
    \le &C(\|\pi_2{\eta_2'}^\nu|_{t=0}\|_{H^1(\R)}+\|\pi_2(\eta_2'^\nu|_{t=\delta^\nu}-\eta_2'^\nu|_{t=0})\|_{H^1(\R)}+\|(\eta_1^\nu-{\eta_1'}^\nu)|_{t=0}\|_{H^1(\R)}\\
    +&2\|(\eta_2'^\nu,\eta_1^\nu,\eta_1'^\nu)|_{t=\delta^\nu}-(\eta_2'^\nu,\eta_1^\nu,\eta_1'^\nu)|_{t=0}\|_{H^1(\R)}+\|\pi_{211}^\perp(\eta_2'^\nu,\eta_1^\nu,\eta_1'^\nu)|_{t=\delta^\nu}\|_{H^1(\R)})\\
    \rightarrow &0.
\end{align*}
Combine this with the fact that
\[
\|(\eta_2'^\nu,\eta_1^\nu,\eta_1'^\nu)|_{t=t_0}-(\eta_2'^\nu,\eta_1^\nu,\eta_1'^\nu)|_{t=\delta^\nu}\|^2_{H^1(\R)}\rightarrow0,
\]
this gives that
\[
\|(\eta_2'^\nu,\eta_1^\nu,\eta_1'^\nu)\|_{C^0([0,\delta^\nu],H^1(\R))}\rightarrow0.
\]
This and
\[
\|\eta^\nu_2\|_{C^0([0,1],H^1(\R))}\rightarrow0
\]
contradicts with the assumption that
\[
\|\eta^\nu_2\|_{C^0([0,1],H^1(\R))}+\|(\eta_2',\eta_1,\eta_1')\|_{C^0([0,\delta^\nu],H^1(\R))}=1.
\]
\end{proof}
Given $\eta\in\Gamma_{1,\delta}(\epsilon)$, \textbf{the next task is to compare $d\mathcal{F}_u(\eta)$ with $D_{e_u(\eta)}$}, where $D_{e_u(\eta)}$ is the linearized operator of the Cauchy-Riemann operator on $F_2\times F_2\times F_1\times F_1$.

\begin{lem}\label{lem:linearest}
    There are uniform constants $\epsilon>0$ and $C$ such that for all $\delta\in(0,1]$ and $\eta\in\Gamma_{1,\delta}(\epsilon),\alpha\in\Gamma_{1,\delta}$
    \begin{align*}
        \|\mathcal{F}_u(0)\|_{\Omega_{1,\delta}}&\le C\delta^\frac14,\\
        \|d\mathcal{F}_u(\eta)\alpha-d\mathcal{F}_u(0)\alpha\|_{\Omega_{1,\delta}}&\le C\|\eta\|_{\Gamma_{1,\delta}}\|\alpha\|_{\Gamma_{1,\delta}},\\
        \|d\mathcal{F}_u(\eta)\alpha-I\Phi_u(\eta)^{-1}D_{e_u(\eta)}E_u(\eta)\alpha\|_{\Omega_{1,\delta}}&\le C\|\eta\|_{\Gamma_{1,\delta}}\|\alpha\|_{\Gamma_{1,\delta}},
    \end{align*}
    where
    \[
    E_u(\eta)\alpha:=\frac{d}{dt}e_u(\eta+t\alpha)|_{t=0}.
    \]
\end{lem}

\begin{proof}
    Since the tuple $(u_2',u_1,u_1')$ is the constant extension of $u_2|_{t=0}$ along $t$ direction and $u_2$ is pseduoholomorphic, therefore
    \begin{align*}
         \|\mathcal{F}_u(0)\|_{\Omega_{1,\delta}}=&\|I\partial_s(0,u_2',u_1,u_1')\|_{H^1_{1,\delta}}+\|I\partial_s(0,u_2',u_1,u_1')\|_{L^4_{1,\delta}}\\
        &\le C\delta^\frac12(\|\partial_su_2'|_{t=0}\|_{H^1(\R)}+2\|\partial_su_1|_{t=0}\|_{H^1(\R)})\\
        &+C\delta^\frac14(\|\partial_su_2'|_{t=0}\|_{L^4(\R)}+2\|\partial_su_1|_{t=0}\|_{L^4(\R)})\\
        &=C\delta^\frac12(\|\partial_su_2|_{t=0}\|_{H^1(\R)}+2\|\partial_su_1|_{t=0}\|_{H^1(\R)})\\
        &+C\delta^\frac14(\|\partial_su_2|_{t=0}\|_{L^4(\R)}+2\|\partial_su_1|_{t=0}\|_{L^4(\R)})\\
        &\le C\delta^{\frac14}.
    \end{align*}
    After taking the Hermitian covariant derivative $\Tilde{\nabla}$ at $t=0$ for the identity
    \[
    \Phi_u(\eta+t\alpha) I^{-1}\mathcal{F}_u(\eta+t\alpha)=\bar{\partial}_J(e_u(\eta+t\alpha)),
    \]
    we have
    \[
    \Phi_u(\eta)I^{-1}d\mathcal{F}_u(\eta)\alpha-D_{e_u(\eta)}E_u(\eta)\alpha=-\Psi_u(\eta,\alpha,\mathcal{F}_u(\eta)),
    \]
    where
    \[
    \Psi_u(\eta,\alpha,\mathcal{F}_u(\eta)):=\Tilde{\nabla}_t(\Phi_u(\eta+t\alpha)I^{-1}\mathcal{F}_u(\eta))|_{t=0}.
    \]
    So for the third inequality, we only need to estimate $\|\Psi_u(\eta,\alpha,\mathcal{F}_u(\eta))\|_{\Omega_{1,\delta}}$, This estimate is contained in the below.

    Let $\eta\in\Gamma_{1,\delta}(\epsilon)$ and $\alpha\in\Gamma_{1,\delta}$, then Lemma (\ref{lem:sobest}) implies that 
    \[
    \|(\eta_2',\eta_1,\eta_1')\|_{C^0}\le C\|(\eta_2',\eta_1,\eta_1')\|_{H^2_{1,\delta}}\le C\epsilon, \|(\alpha_2',\alpha_1,\alpha_1')\|_{C^0}\le C\|(\alpha_2',\alpha_1,\alpha_1')\|_{H^2_{1,\delta}}.
    \]
    Consider 
    \begin{align*}
        &E_{(u_2',u_1,u_1')}(\eta_2',\eta_1,\eta_1')(\alpha_2',\alpha_1,\alpha_1'):=\frac{d}{dt}e_{(u_2',u_1,u_1')}((\eta_2',\eta_1,\eta_1')+t(\alpha_2',\alpha_1,\alpha_1'))|_{t=0}\\
        &\Psi_{(u_2',u_1,u_1')}((\eta_2',\eta_1,\eta_1'),(\alpha_2',\alpha_1,\alpha_1'),(\xi_2',\xi_1,\xi_1'))\\
        :=&\Tilde{\nabla}_t(\Phi_{(u_2',u_1,u_1')}((\eta_2',\eta_1,\eta_1')+t(\alpha_2',\alpha_1,\alpha_1'))(\xi_2',\xi_1,\xi_1'))|_{t=0}.
    \end{align*}
    Notice that $E_{(u_2',u_1,u_1')}(0)=$Id and that $\Psi(0,(\alpha_2',\alpha_1,\alpha_1'),(\xi_2',\xi_1,\xi_1'))=0$ and the fact that these maps are linear in $(\alpha_2',\alpha_1,\alpha_1'),(\xi_2',\xi_1,\xi_1')$ and depend smoothly on $(\eta_2',\eta_1,\eta_1')$. Therefore the following estimates are true
    \begin{align*}
        |E_{(u_2',u_1,u_1')}(\eta_2',\eta_1,\eta_1')|&\le C,\\
        |\nabla E_{(u_2',u_1,u_1')}(\eta_2',\eta_1,\eta_1')|&\le C(|\nabla(\eta_2',\eta_1,\eta_1')|+|d(u_2',u_1,u_1')\|(\eta_2',\eta_1,\eta_1')|),\\
        \Psi_{(u_2',u_1,u_1')}((\eta_2',\eta_1,\eta_1'),(\alpha_2',\alpha_1,\alpha_1'),(\xi_2',\xi_1,\xi_1'))&\le C|(\eta_2',\eta_1,\eta_1')\|(\alpha_2',\alpha_1,\alpha_1')\|(\xi_2',\xi_1,\xi_1')|.
    \end{align*}
    Set $\bar{u}=(u_2',u_1,u_1')$,$\hat{\eta}=(\eta_2',\eta_1,\eta_1')$,$\hat{\alpha}=(\alpha_2',\alpha_1,\alpha_1')$, the following equation is true (Keeping track on the estimates of the first term on the right hand side below gives the third inequality of this lemma.)
    \begin{align*}
        &\Phi_{\bar{u}}(\hat{\eta})I^{-1}(d\mathcal{F}_{\bar{u}}(\hat{\eta})\hat{\alpha}-d\mathcal{F}_{\bar{u}}(0)\hat{\alpha})\\
        =&-\Psi_{\bar{u}}(\hat{\eta},\hat{\alpha},\mathcal{F}_{\bar{u}}(\hat{\eta}))+(\nabla(E_{\bar{u}}(\hat\eta))\hat{\alpha})^{0,1}+((E_{\bar{u}}(\hat\eta)-\Phi_{\bar{u}}(\hat{\eta}))\nabla\hat\alpha)^{0,1}\\
        &-\frac12J(e_{\bar{u}}(\hat\eta))(((\nabla_{(E_{\bar{u}}(\hat\eta)-\Phi_{\bar{u}}(\hat{\eta}))\hat\alpha}J)(e_{\bar{u}}(\hat\eta)))\Phi_{\bar{u}}(\hat\eta)d\bar{u})^{0,1}\\
        &-\frac12J(e_{\bar{u}}(\hat\eta))(((\nabla_{\Phi_{\bar{u}}(\hat{\eta})\hat\alpha}J)(e_{\bar{u}}(\hat\eta))-\Phi_{\bar{u}}(\hat\eta)(\nabla_{\hat\alpha}J)(\bar{u})\Phi_{\bar{u}}(\hat\eta)^{-1})\Phi_{\bar{u}}(\hat\eta)d\bar{u})^{0,1}\\
        &-\frac12J(e_{\bar{u}}(\hat\eta))((\nabla_{E_{\bar{u}}(\hat{\eta})\hat\alpha}J)(e_{\bar{u}}(\hat\eta))(d(e_{\bar{u}}(\hat\eta))-\Phi_{\bar{u}}(\hat\eta)d\bar{u}))^{0,1}.
    \end{align*}
    After applying $I\Phi_{\bar{u}}(\hat\eta)^{-1}$ to the above identity, using the fact that $|J|,|\nabla J|,|I\Phi_{\bar{u}}(\hat\eta)^{-1}|,|d\bar{u}|,|\hat\eta|$ are bounded, and
    \begin{align*}
        |\mathcal{F}_{\bar{u}}|\le C|d(e_{\bar{u}}(\hat\eta))|\le C(|\nabla\hat\eta|+|d\bar{u}|),|d(e_{\bar{u}}(\hat\eta))-\Phi_{\bar{u}}(\hat\eta)d\bar{u}|\le C(|\nabla\hat\eta|+|d\bar{u}\|\hat\eta|),\\
        |E_{\bar{u}}(\hat\eta)-\Phi_{\bar{u}}(\hat\eta)|\le C|\hat\eta|,|(\nabla_{\Phi_{\bar{u}}(\hat{\eta})\hat\alpha}J)(e_{\bar{u}}(\hat\eta))-\Phi_{\bar{u}}(\hat\eta)(\nabla_{\hat\alpha}J)(\bar{u})\Phi_{\bar{u}}(\hat\eta)^{-1}|\le C|\hat\eta\|\hat\alpha|,
    \end{align*}
    then the above equation gives the following estimate:
    \begin{align*}
        &|d\mathcal{F}_{\bar{u}}(\hat{\eta})\hat{\alpha}-d\mathcal{F}_{\bar{u}}(0)\hat{\alpha}|\\
        \le &C(|\hat\eta\|\hat\alpha|+|\hat\alpha\|\nabla\hat\eta|)+C(|\nabla\hat\eta\|\hat\alpha|+|\hat\eta\|\hat\alpha|)\\
        +&C(|\hat\eta\|\nabla\alpha|)+C(|\hat\eta\|\nabla\hat\alpha|)+C(|\hat\eta\|\hat\alpha|)+C(|\hat\alpha\|\nabla\hat\eta|+|\hat\alpha\|\hat\eta|)\\
        \le &C(|\hat\eta\|\hat\alpha|+|\hat\alpha\|\nabla\hat\eta|+|\hat\eta\|\nabla\hat\alpha|).
    \end{align*}
    Using Holder's inequality and Lemma (\ref{lem:sobest}), we get the following estimates
    \begin{align*}
        \|d\mathcal{F}_{\bar{u}}(\hat{\eta})\hat{\alpha}-d\mathcal{F}_{\bar{u}}(0)\hat{\alpha}\|_{L^2}\le &C(\|\hat\eta\|_{L^4}\|\hat\alpha\|_{L^4}+\|\hat\alpha\|_{C^0}\|\nabla\hat\eta\|_{L^2}+\|\hat\eta\|_{C^0}\|\nabla\hat\alpha\|_{L^2})\\
        \le &C\|\hat\eta\|_{H^2_{1,\delta}}\|\hat\alpha\|_{H^2_{1,\delta}},\\
        \le &C\|\hat\eta\|_{\Gamma_{1,\delta}}\|\hat\alpha\|_{\Gamma_{1,\delta}},\\
        \|d\mathcal{F}_{\bar{u}}(\hat{\eta})\hat{\alpha}-d\mathcal{F}_{\bar{u}}(0)\hat{\alpha}\|_{L^4}\le &C(\|\hat\eta\|_{L^8}\|\hat\alpha\|_{L^8}+\|\hat\alpha\|_{C^0}\|\nabla\hat\eta\|_{L^4}+\|\hat\eta\|_{C^0}\|\nabla\hat\alpha\|_{L^4})\\
        \le &C(\|\hat\eta\|_{H^2_{1,\delta}}+\|\hat\eta\|_{L^4_{1,\delta}})(\|\hat\alpha\|_{H^2_{1,\delta}}+\|\nabla\hat\alpha\|_{L^4_{1,\delta}})\\
        = &C\|\hat\eta\|_{\Gamma_{1,\delta}}\|\hat\alpha\|_{\Gamma_{1,\delta}}.
    \end{align*}
    It remains to estimate the norm of the derivative of $d\mathcal{F}_{\bar{u}}(\hat{\eta})\hat{\alpha}-d\mathcal{F}_{\bar{u}}(0)\hat{\alpha}$.
    \begin{align*}
    &\|\nabla(d\mathcal{F}_{\bar{u}}(\hat{\eta})\hat{\alpha}-d\mathcal{F}_{\bar{u}}(0)\hat{\alpha})\|_{L^2}\\
    \le &C\|(|\hat\eta|+|\nabla\hat\eta|)(|\hat\alpha|+|\nabla\hat\alpha|)\|_{L^2}\\
    &+C\|(|\nabla^2\hat\eta\|\hat\alpha|+|\nabla\hat\eta|^2|\hat\alpha|+|\nabla\hat\eta\|\nabla\hat\alpha|+|\hat\eta\|\nabla^2\hat\alpha|)\|_{L^2}\\
    \le &C(\|\hat\eta\|_{L^4}+\|\nabla\hat\eta\|_{L^4})(\|\hat\alpha\|_{L^4}+\|\nabla\hat\alpha\|_{L^4})\\
    &+C(\|\nabla^2\hat\eta\|_{L^2}\|\hat\alpha\|_{C^0}+\|\nabla\hat\eta\|_{L^4}^2\|\hat\alpha\|_{C^0}+\|\nabla\hat\eta\|_{L^4}\|\nabla\hat\alpha\|_{L^4}+\|\hat\eta\|_{C^0}\|\nabla^2\hat\alpha\|_{L^2})\\
    &(\text{Lemma } (\ref{lem:sobest})\,\|\nabla\hat\eta\|_{\Gamma_{1,\delta}}\le\epsilon)\\
    \le &C\|\hat\eta\|_{\Gamma_{1,\delta}}\|\hat\alpha\|_{\Gamma_{1,\delta}}
\end{align*}
\end{proof}

To prove the main theorem, the following theorem is needed.
\begin{them}[McDuff and Salamon \cite{mcduff2012j} Proposition A 3.4]\label{thm:mcdu}
    Let $X$ and $Y$ be Banach spaces, $U\subset X$ be an open set, and $f:U \rightarrow Y$ be a continuously differentiable map. Let $x_0\in U$ be such that $D:=df(x_0):X\rightarrow Y$ is surjective and has a $($ bounded linear $)$ right inverse $Q:Y\rightarrow X$. Choose positive constants $\delta$ and $c$ such that $\|Q\|\le c$, $B_\delta(x_0;X)\subset U$, and 
    \[
    \|x-x_0\|<\varepsilon\Rightarrow \|df(x)-D\|\le\frac1{2c},
    \]
    Suppose that $x_1\in X$ satisfies
    \[
    \|f(x_1)\|<\frac{\varepsilon}{4c},\quad \|x_1-x_0\|<\frac\varepsilon8,
    \]
    Then there exists a unique $x\in X$ such that 
    \[
    f(x)=0,\quad x-x_1\in\im Q,\quad \|x-x_0\|<\varepsilon.
    \]
    Moreover, $\|x-x_1\|\le2c\|f(x_1)\|$.
\end{them}

In our case, $f=\mathcal{F}_u$ and $x_0=0$. According to Lemma $(\ref{lem:linearest})$, $\|\eta\|$ and $\|\alpha\|$ can be chosen small enough such that 
\[
\|d\mathcal{F}_u(\eta)\alpha-d\mathcal{F}_u(0)\alpha\|_{\Omega_{1,\delta}}\le\frac1{2c}.
\]
If further more, set $x_1=x_0$ and let $\delta$ small enough, then the third condition in the above theorem is satisfied. So what remains is to show that $d\mathcal{F}_u(0)$ is surjective and has a bounded linear right inverse. With this condition satisfied, there is an $\eta$ such that $\mathcal{F}_u(\eta)=0$ by the above theorem. According to the definition of $\mathcal{F}_u(\eta)$, $e_u(\eta)$ is $J$-holomorphic.

To simplify the notation, let
\begin{align*}
    D^\delta=d\mathcal{F}_u(0):&\Gamma_{1,\delta}\rightarrow\Omega_{1,\delta}\\
    D^\delta\eta=d\mathcal{F}_u(0)\eta=&(D_2\eta_2,D_{211}(\eta_2',\eta_1,\eta_1'))
\end{align*}
where $D_2$ and $D_{211}$ are the linearization of the Cauchy-Riemann operator at $u_2$ and $(u_2',u_1,u_1')$, respectively. Because the map $I$ is identity on the fiber, the same formulas as in Wehrheim and Woodward hold \cite{wehrheim2009floer} here. More precisely,
\begin{align*}
    D_2\eta_2=&\nabla_s\eta_2+J(u_2)\nabla_t\eta_2+\nabla_{\eta_2}J_2(u_2)\partial_tu_2\\
    D_{211}(\eta_2',\eta_1,\eta_1')=&\nabla_s(\eta_2',\eta_1,\eta_1')+J(u_2',u_1,u_1')\nabla_t(\eta_2',\eta_1,\eta_1')\\
    +&\frac{1}{2}\nabla_{(\eta_2',\eta_1,\eta_1')}J(u_2',u_1,u_1')J(u_2',u_1,u_1')\partial_s(\eta_2',\eta_1,\eta_1').
\end{align*}
The formal adjoint operator $(D^\delta)^*$ of $D^\delta$ is given by $(-\nabla_s+J(u_2)\nabla_t,-\nabla_s+J(u_2',u_1,u_1')\nabla_t)$ adding lower order terms. So $(D^\delta)^*$ has the same analytic properties as $D^\delta$.  If $(D^\delta)^*$ is injective, then $D^\delta$ is surjective (\cite{wehrheim2009floer} Page 201).\\

%\begin{cla}
%    If $(D^\delta)^*$ is injective, then $D^\delta$ is surjective.
%\end{cla}

%\begin{proof}
%    Suppose on the contrary, there is an element $0\ne\xi\in(\im D^\delta)^\perp\subset(H^1_{1,\delta})^*$ \textcolor{red}{(this dual does not make sense since the boundary condition does not make sense)} such that
%    \[
%    \langle\xi,\im D^\delta\rangle=0.
%    \]
%    Then
%    \[
%    (D^\delta)^*\xi=0\in(H^1_{1,\delta})^*.
%    \]
%    Because $(D^\delta)^*$ is elliptic, $\xi$ is smooth by the elliptic regularity condition. Therefore $\xi\in\Gamma_{1,\delta}$. This contradicts with the assumption that $(D^\delta)^*$ has trivial kernel.
%\end{proof}

\textbf{Another lemma is needed to prove that $\ker((D^\delta)^*)=0$}.

\begin{lem}
    \begin{itemize}
        \item [$($a$)$] There is a constant $c_1>0$ such that for all $\delta\in(0,1]$ and $\eta\in\Gamma_{1,\delta}$
        \begin{equation}\label{ineq:4}
            \begin{aligned}
            c_1\|\eta\|_{H^2_{1,\delta}}&\le\|D^\delta\eta\|_{H^1_{1,\delta}}+\|\eta\|_{H^0_{1,\delta}}+\|(\eta_2',\eta_1,\eta_1')|_{t=\delta}\|_{H^1(\R)}+\|\eta_2|_{t=1}\|_{H^1(\R)},\\
            c_1\|\nabla\eta\|_{L^4_{1,\delta}}&\le\|D^\delta\eta\|_{H^1_{1,\delta}}+\|D^\delta\eta\|_{L^4_{1,\delta}}+\|\eta\|_{H^0_{1,\delta}}+\|(\eta_2',\eta_1,\eta_1')|_{t=\delta}\|_{H^1(\R)}\\
            &+\|\eta_2|_{t=1}\|_{H^1(\R)},
        \end{aligned}
        \end{equation}
        and the same holds with $D^\delta$ replaced by $(D^\delta)^*$.
        \item [$($b$)$] There is a constant $c_2>0$ such that for all $\delta\in(0,1]$ and $\eta\in\Gamma_{1,\delta}$
        \begin{align*}
            &c_2(\|\hat\eta|_{t=\delta}\|_{H^1(\R)}+\|\eta_2|_{t=1}\|_{H^1(\R)}+\|\eta\|_{H^0_{1,\delta}})\\
            \le &\|D^*_{u_2}\eta_2\|_{H^1(\R\times[0,1])}+\sqrt{\delta}\|\nabla_t\hat\eta\|_{H^1(\R\times[0,\delta])},
        \end{align*}
        and for all $\eta\in\Gamma_{1,\delta}\cap K_0$
        \begin{equation}\label{ineq:3}
            \begin{aligned}
                &c_2(\|(\eta_2',\eta_1,\eta_1')\|_{H^1(\R)}+\|\eta_2|_{t=1}\|_{H^1(\R)}+\|\eta\|_{H^0_{1,\delta}})\\
            \le &\|D_{u_2}\eta_2\|_{H^1(\R\times[0,1])}+\sqrt{\delta}\|\nabla_t(\eta_2',\eta_1,\eta_1')\|_{H^1(\R\times[0,\delta])},
            \end{aligned}
        \end{equation}
        where 
        \[
        K_0:=\{\eta=(\eta_2,\eta_2',\eta_1,\eta_1')\in\Gamma_{1,\delta}|\langle\eta_2,\ker(D_{u_2}\oplus\pi_2^\perp)\rangle_{L^2}=0\}.
        \]
    \end{itemize}
\end{lem}

\begin{proof}
    To prove $($a$)$, define
    \[
        J(\xi)=(J_2(\xi_2),\hat J(\hat\xi))=(de_{u_2}(\xi_2)^{-1}J_2(e_{u_2}(\xi_2))de_{u_2}(\xi_2),de_{\hat u}(\hat\xi)^{-1}J(e_{\hat u}(\hat\xi))de_{u}(\hat\xi)),
    \]
    where $J=(J_2,\hat J)=(J_2,J_2',J_1,J_1')$ is the almost complex structure, $u=(u_2,\hat u)=(u_2,u_2',u_1,u_1)$ is the given strip, and $\xi=(\xi_2,\hat\xi)=(\xi_2,\xi_2',\xi_1,\xi_1')\in\Gamma_{1,\delta}$. If $\xi=(\xi_2,\hat\xi)\in\Gamma_{1,\delta}(\varepsilon)$, then
    \[
    \bar\partial_J(e_u(\xi))=de_u(\xi)(\nabla_s\xi+J(\xi)\nabla_t\xi)+\partial_ue(\xi)\partial_su+J(u)\partial_ue(\xi)\partial_tu.
    \]
    The linearized operator is
    \[
    D^\delta\xi=\nabla_s\xi+J\nabla_t\xi+(\nabla_{\xi_2}J_2(u_2)\partial_tu_2,\frac12\nabla_{\hat\xi}\hat J(\hat u)\hat J(u)\partial_t(u)).
    \]
    Notice that 
    \begin{align*}
        \|\nabla_{\xi_2}J_2(u_2)\partial_tu_2\|_{L^2(\R\times[0,1])}+\|\nabla_{\hat\xi}\hat J(u)J(u)\partial_su\|_{L^2(\R\times[0,\delta])}\le C\|\xi\|_{H^0_{1,\delta}},\\
        \|\nabla_{\hat\xi}\hat J(u)J(u)\partial_su\|_{H^1(\R\times[0,\delta])}\le C\|\xi\|_{H^1_{1,\delta}}.
    \end{align*}
    For $J=J(0)$ and $\eta\in\Gamma_{1,\delta}$, we first show that 
    \[
    \|\eta\|_{H^2_{1,\delta}}\le C(\|\nabla_s\eta+J\nabla_t\eta\|_{H^1_{1,\delta}}+\|\eta\|_{H^0_{1,\delta}}+\|\hat\eta|_{t=\delta}\|_{H^1(\R)}+\|\eta_2|_{t=1}\|_{H^1(\R)}).
    \]
    First it is trivial that 
    \[
    \|\eta\|_{H^0_{1,\delta}}\le C(\|\nabla_s\eta+J\nabla_t\eta\|_{H^1_{1,\delta}}+\|\eta\|_{H^0_{1,\delta}}+\|\hat\eta|_{t=\delta}\|_{H^1(\R)}+\|\eta_2|_{t=1}\|_{H^1(\R)}).
    \]
    Then we need to estimate $\|\nabla\eta\|_{L^2_{1,\delta}}$.\\
    Notice that 
    \begin{align*}
        &\|\nabla_s\eta+J\nabla_t\eta\|_{L^2_{1,\delta}}^2\\
        =&\int(|\nabla_s\eta|^2+|\nabla_t\eta|^2)+\langle\nabla_s\eta,J\nabla_t\eta\rangle-\langle\nabla_t\eta,J\nabla_s\eta\rangle\quad\text{(integral by parts)}\\
        =&\|\nabla\eta\|^2_{L^2_{1,\delta}}-\int(\nabla_sg(\eta,J\nabla_t\eta)-\nabla_tg(\eta,J\nabla_s\eta))-\int(\eta,\nabla_s(J\nabla_t\eta)-\nabla_t(J\nabla_s\eta))\\
        &-\lim_{S\rightarrow\infty}\int_{S=-S}(\eta,J\nabla_t\eta)+\lim_{s\rightarrow\infty}\int_{s=S}(\eta,J\nabla_t\eta)\\
        &+\int_{\R\times\{0\}}(\eta,J\nabla_s\eta)-\int_{\R\times\{1\}}(\eta_2,J_2\nabla_s\eta_2)-\int_{\R\times\{\delta\}}(\hat\eta,\hat J\nabla_s\hat\eta)
    \end{align*}
    Denote the last 3 terms above as $-\mathcal D{(\eta_2|_{t=0},\hat\eta|_{t=0})}$, $\Omega_2(\eta_2|_{t=1})$, and $\Omega_{211}(\hat\eta|_{t=\delta})$, respectively.\\
    According to Holder's inequality,
    \[
    |\int(\nabla_sg(\eta,J\nabla_t\eta)-\nabla_tg(\eta,J\nabla_s\eta))|\le C\|\eta\|_{L^2_{1,\delta}}^2\|\nabla\eta\|_{L^2_{1,\delta}}^2.
    \]
    By using Leibniz's rule and Holder's inequality,
    \begin{align*}
        &|\int(\eta,\nabla_s(J\nabla_t\eta)-\nabla_t(J\nabla_s\eta))|\\
        =&|\int(\eta,\nabla_s(J)\nabla_t\eta+J\nabla_s(\nabla_t\eta)-\nabla_t(J)\nabla_s\eta-J\nabla_t(\nabla_s\eta))|\\
        \le&C\int|\eta||\nabla\eta|. 
    \end{align*}
    Because
    \[
    \int_{s=-S}(\eta,J\nabla_t\eta)\rightarrow0,\quad \int_{s=S}(\eta,J\nabla_t\eta)\rightarrow0,\quad as\quad S\rightarrow0,
    \]
    then
    \[
    \lim_{S\rightarrow\infty}\int_{s=-S}(\eta,J\nabla_t\eta)=\lim_{S\rightarrow\infty}\int_{s=S}(\eta,J\nabla_t\eta)=0.
    \]
    The above combines to get
    \begin{align*}
        &\|\nabla_s\eta+J\nabla_t\eta\|_{L^2_{1,\delta}}^2\\
        \ge&\|\nabla\eta\|_{L^2_{1,\delta}}^2-C\int(|\eta||\nabla\eta|+|\eta|^2)-\mathcal D{(\eta_2|_{t=0},\hat\eta|_{t=0})}-\Omega_2(\eta_2|_{t=1})-\Omega_{211}(\hat\eta|_{t=\delta}).
    \end{align*}
    Since there is a constant such that $|\eta||\nabla\eta|\le C|\eta|^2+\frac{1}{2}|\nabla\eta|^2$, then
    \begin{align*}
      &\|\nabla_s\eta+J\nabla_t\eta\|_{L^2_{1,\delta}}^2\\
      \ge&\|\nabla\eta\|_{L^2_{1,\delta}}^2-C\|\eta\|_{L^2_{1,\delta}}^2-\frac12\|\nabla\eta\|_{L^2_{1,\delta}}^2-\mathcal D{(\eta_2|_{t=0},\hat\eta|_{t=0})}-\Omega_2(\eta_2|_{t=1})-\Omega_{211}(\hat\eta|_{t=\delta})\\
      =&\frac12\|\nabla\eta\|_{L^2_{1,\delta}}^2-C\|\eta\|_{L^2_{1,\delta}}^2-\mathcal D{(\eta_2|_{t=0},\hat\eta|_{t=0})}-\Omega_2(\eta_2|_{t=1})-\Omega_{211}(\hat\eta|_{t=\delta}).
    \end{align*}
    According to the boundary condition of $\eta$,
    \begin{align*}
        &|\mathcal D(\eta_2|_{t=0},\hat\eta|_{t=0})|\\
        =&|\int_{\R\times\{0\}}(\eta,J\nabla_s\eta)|\\
        =&|\int_\R(\omega_2(\eta_2(s,0),\nabla_s\eta_2(s,0))+\omega_{211}(\hat\eta(s,0),\nabla_s\hat\eta(s,0)))|\\
        =&|\int_\R(\omega_2(\eta_2(s,0),\nabla_s\eta_2(s,0))-\omega_2(\eta_2'(s,0),\nabla_s\eta_2'(s,0))\\
        -&\omega_1(\eta_1(s,0),\nabla_s\eta_1(s,0))+\omega_1(\eta_1'(s,0),\nabla_s\eta_1'(s,0)))|\\
        =&0.
    \end{align*}
    Then it is necessary to estimate $\Omega_2(\eta_2|_{t=1}),\Omega_{211}(\hat\eta|_{t=\delta})$. By taking an orthonormal basis $\{\gamma_i(s)\}_i$ on the pull back of the tangent space of the Lagrangians, then
    \begin{align*}
        \eta=\sum\lambda^i\gamma_i,\qquad\nabla_s\eta=\sum(\partial_s\lambda^i\gamma_i+\lambda^i\nabla_s\gamma_i),\\
        \nabla^2_s\eta=\sum(\partial_s^2\lambda^i\gamma_i+2\partial_s\lambda^i\nabla_s\gamma_i+\lambda^i\nabla_s^2\gamma_i).\quad   
    \end{align*}
    Therefore the norm of the tuple $\lambda:=(\lambda_i)_i$ is the same as the norm of $\eta$ and $\omega(\gamma_i,\gamma_j)=0$. These indicate the following inequalities:
    \begin{align*}
        |\int_\R\omega(\eta,\nabla_s\eta)|=&|\int_\R\omega(\sum\lambda^i\gamma_i,\sum(\partial_s\lambda^i\gamma_i+\lambda^i\nabla_s\gamma_i))|\\
        =&|\int_\R\omega(\sum\lambda^i\gamma_i,\sum\lambda^i\nabla_s\gamma_i)|\\
        \le&\int_\R C|\eta(s)||\lambda(s)|ds\\
        =&C\|\eta\|^2_{L^2(\R)},
    \end{align*}
    and
    \begin{align*}
       &|\int_\R\omega(\nabla_s\eta,\nabla_s^2\eta)|\\
       =&|\int_\R\omega(\sum(\partial_s\lambda^i\gamma_i+\lambda^i\nabla_s\gamma_i),\sum(\partial_s^2\lambda^i\gamma_i+2\partial_s\lambda^i\nabla_s\gamma_i+\lambda^i\nabla_s^2\gamma_i))|\\
       =&|\int_\R\omega(\sum\partial_s\lambda^i\gamma_i,\sum(2\partial_s\lambda^i\nabla_s\gamma_i+\lambda^i\nabla_s^2\gamma_i))|\\
       &+|\int_\R\omega(\sum\lambda^i\nabla_s\gamma_i,\sum(\partial_s^2\lambda^i\gamma_i+2\partial_s\lambda^i\nabla_s\gamma_i+\lambda^i\nabla_s^2\gamma_i))|\\
       \le&|C\int_\R(|\nabla_s\eta||\lambda|+|\nabla_s\eta||\partial_s\lambda|+|\partial_s\lambda|^2+|\lambda|^2)|\\
       \le&C\|\eta\|_{H^1(\R)}^2.
    \end{align*}
    This type of inequality shows that
    \begin{align*}
       |\Omega_2(\eta_2|_{t=1})|\le&C\|\eta_2|_{t=1}\|^2_{L^2(\R)},\\
       |\Omega_{211}(\hat\eta|_{t=\delta})|\le&C\|\hat\eta_{211}|_{t=\delta}\|^2_{L^2(\R)},\\
       |\Omega_2(\nabla_s\eta_2|_{t=1})|\le&C\|\eta_2|_{t=1}\|^2_{H^1(\R)},\\
       |\Omega_{211}(\nabla_s\hat\eta|_{t=\delta})|\le&C\|\hat\eta_{211}|_{t=\delta}\|^2_{H^1(\R)}.
    \end{align*}
     Thus
       \begin{equation}\label{ineq:6}
           \begin{aligned}
               \|\nabla\eta\|_{L^2_{1,\delta}}^2\le&C(\|\nabla_s\eta+J\nabla_t\eta\|_{L^2_{1,\delta}}^2+\|\eta\|_{L^2_{1,\delta}}^2+|\Omega_2(\eta_2|_{t=1})|+|\Omega_{211}(\hat\eta|_{t=\delta})|)\\           \le&C(\|\nabla_s\eta+J\nabla_t\eta\|_{L^2_{1,\delta}}^2+\|\eta\|_{L^2_{1,\delta}}^2+\|\hat\eta|_{t=\delta}\|_{L^2(\R)}^2+\|\eta_{2}|_{t=1}\|_{L^2(\R)}^2).
           \end{aligned}
       \end{equation}
    By replacing $\eta$ with $\nabla_s\eta$ in the above inequality, the following is true.
    \begin{align*}
        \|\nabla\nabla_s\eta\|_{L^2_{1,\delta}}^2\le&C(\|\nabla_s(\nabla_s\eta+J\nabla_t\eta)\|_{L^2_{1,\delta}}^2+\|\nabla\eta\|_{L^2_{1,\delta}}^2+|\Omega_2(\nabla_s\eta_2|_{t=1})|+|\Omega_{211}(\nabla_s\hat\eta|_{t=\delta})|)\\           \le&C(\|\nabla_s\eta+J\nabla_t\eta\|_{H^1_{1,\delta}}^2+\|\eta\|_{L^2_{1,\delta}}^2+\|\hat\eta|_{t=\delta}\|_{H^1(\R)}^2+\|\eta_{2}|_{t=1}\|_{H^1(\R)}^2).
    \end{align*}
    Therefore
    \begin{align*}
       \|\nabla\nabla_t\eta\|_{L^2_{1,\delta}}^2\le&C(\|\nabla\nabla_s\eta\|_{L^2_{1,\delta}}^2+\|\nabla_s\eta+J\nabla_t\eta\|_{H^1_{1,\delta}}^2+\|\eta\|_{L^2_{1,\delta}}^2+\|\hat\eta|_{t=\delta}\|_{H^1(\R)}^2+\|\eta_{2}|_{t=1}\|_{H^1(\R)}^2)\\
       \le&C(\|\nabla_s\eta+J\nabla_t\eta\|_{H^1_{1,\delta}}^2+\|\eta\|_{L^2_{1,\delta}}^2+\|\hat\eta|_{t=\delta}\|_{H^1(\R)}^2+\|\eta_{2}|_{t=1}\|_{H^1(\R)}^2)
    \end{align*}
    To finish the proof of the first inequality of $($a$)$, notice that
    \begin{align*}
        &\|\nabla_s\eta+J\nabla_t\eta\|_{H^1_{1,\delta}}\\
        \le&\|D^\delta\eta\|_{H^1_{1,\delta}}+\|\nabla_{\eta_2}J_2(u_2)\partial_tu_2\|_{H^1(\R\times[0,1])}+\|\nabla_{\hat\eta}\hat J(u)J(u)\partial_su\|_{H^1(\R\times[0,\delta])}\\
        \le&C(\|D^\delta\eta\|_{H^1_{1,\delta}}+\|\eta\|_{H^1_{1,\delta}}).
    \end{align*}
    Therefore it remains to show that
    \begin{equation}\label{ineq:5}
        \|\eta\|_{H^1_{1,\delta}}\le C(\|D^\delta\eta\|_{H^1_{1,\delta}}+\|\eta\|_{H^0_{1,\delta}}+\|(\eta_2',\eta_1,\eta_1')|_{t=\delta}\|_{H^1(\R)}+\|\eta_2|_{t=1}\|_{H^1(\R)}).
    \end{equation}
    Since
    \begin{align*}
        &\|\nabla\eta\|_{L^2_{1,\delta}}^2\\
        \le&C(\|\nabla_s\eta+J\nabla_t\eta\|_{L^2_{1,\delta}}^2+\|\eta\|_{L^2_{1,\delta}}^2+\|\hat\eta|_{t=\delta}\|_{L^2(\R)}^2+\|\eta_{2}|_{t=1}\|_{L^2(\R)}^2)\quad by\ \ (\ref{ineq:6})\\
        \le&C( \|D^\delta\xi\|_{L^2_{1,\delta}}^2+\|\nabla_{\xi_2}J_2(u_2)\partial_tu_2\|_{L^2}^2+\|\frac12\nabla_{\hat\xi}\hat J(\hat u)\hat J(u)\partial_t(u)\|_{L^2}^2\\
        &+\|\eta\|_{L^2_{1,\delta}}^2+\|\hat\eta|_{t=\delta}\|_{L^2(\R)}^2+\|\eta_{2}|_{t=1}\|_{L^2(\R)}^2)\\
        \le&C(\|D^\delta\eta\|_{H^1_{1,\delta}}+\|\eta\|_{H^0_{1,\delta}}+\|(\eta_2',\eta_1,\eta_1')|_{t=\delta}\|_{H^1(\R)}+\|\eta_2|_{t=1}\|_{H^1(\R)}).
    \end{align*}
    Thus Equation $($\ref{ineq:5}$)$ holds and this proves the first inequality of $($a$)$.\\
    To get the second inequality of $($a$)$, we apply Sobolev inequality (Theorem 5.4 Case B \cite{adams2003sobolev})
    \begin{align*}
        L^4(\R\times[0,1])\hookrightarrow H^1(\R\times[0,1])\\
        L^4(\R\times[0,\delta])\hookrightarrow H^1(\R\times[0,\delta])
    \end{align*}
    to $\|\nabla\eta\|_{L^4_{1,\delta}}$, so
    \[
    \|\nabla\eta\|_{L^4_{1,\delta}}\le C\|\nabla\eta\|_{H^1_{1,\delta}}\le C\|\eta\|_{H^2_{1,\delta}}.
    \]
    Therefore, combine this with the first inequality to get
    \begin{equation*}
            \begin{aligned}
            c_1\|\nabla\eta\|_{L^4_{1,\delta}}&\le\|D^\delta\eta\|_{H^1_{1,\delta}}+\|D^\delta\eta\|_{L^4_{1,\delta}}+\|\eta\|_{H^0_{1,\delta}}+\|(\eta_2',\eta_1,\eta_1')\|_{H^1(\R)}\\
            &+\|\eta_2|_{t=1}\|_{H^1(\R)},
        \end{aligned}
    \end{equation*}

    For the proof of $($b$)$, we argue as contradiction to prove a stronger version of the inequality
    \begin{align*}
            &\|\hat\eta|_{t=\delta}\|_{H^1(\R)}+\|\eta_2|_{t=1}\|_{H^1(\R)}+\|\eta\|_{H^0_{1,\delta}}+\|\eta_{2}\|_{H^1(\R\times[0,1])}\\
            \le &C(\|D^*_{u_2}\eta_2\|_{H^1(\R\times[0,1])}+\sqrt{\delta}\|\nabla_t\hat\eta\|_{H^1(\R\times[0,\delta])}).
    \end{align*}
    The proof of the first and the second one are similar, so the proof of the first one is presented here only. Suppose on the contrary, there are sequences $\{\eta^\nu=(\eta_2^\nu, \hat\eta^\nu)\}_\nu\subset H^2_{1,\delta^\nu}$ and $\{\delta^\nu\}_\nu\subset\Z_{>0}$ such that 
    \[
    \|\hat\eta^\nu|_{t=\delta}\|_{H^1(\R)}+\|\eta_2^\nu|_{t=1}\|_{H^1(\R)}+\|\eta^\nu\|_{H^0_{1,\delta^\nu}}+\|\eta_2^\nu\|_{H^1(\R\times[1,2])}=1, \quad \forall \nu
    \]
    but
    \[    \|D_{u_2}^*\eta_2^\nu\|_{H^1(\R\times[0,1])}+\sqrt{\delta}\|\nabla_t\hat\eta^\nu\|_{H^1(\R\times[0,\delta^\nu])}\rightarrow0,\quad \nu\rightarrow\infty.
    \]
    First note that
    \[
    \|\hat\eta^\nu|_{t=0}-\hat\eta^\nu|_{t=\delta^\nu}\|_{H^1(\R)}\le\int_0^{\delta^\nu}\|\nabla_t\hat\eta^\nu\|_{H^1(\R)}\le\sqrt{\delta^\nu}\|\nabla_t\hat\eta^\nu\|_{H^1(\R\times[0,\delta^\nu])}\rightarrow0.
    \]
    We then prove
    \[
    \|\eta^\nu_2|_{t=0}\|_{L^2(\R)}\le C,\quad\|\eta^\nu_2|_{t=0}\|_{L^2([-T,T])}\rightarrow0,\quad \forall T>0.
    \]
    The idea is the show that the convergent subsequences of $\{\eta^\nu\}$ is in $\ker(D^*_{u_2}\oplus\pi^\perp_2)$. Because $u_2$ is regular, $\ker(D^*_{u_2}\oplus\pi^\perp_2)=0$. These combine to get that the result needed. Applying Lemma $\ref{Lem:3.1.3}$ $($c$)$ to the second inequality below,
    \begin{align*}
        &\|\pi_2^\perp\eta_2^\nu|_{t=0}\|_{H^1(\R)}\\
        \le&\|\pi^\perp_2\eta'^\nu_2|_{t=\delta^\nu}\|_{H^1(\R)}+\|\eta'^\nu_2|_{t=0}-\eta'^\nu_2|_{t=\delta^\nu}\|_{H^1(\R)}+\|\eta'^\nu_2|_{t=0}-\eta^\nu_2|_{t=0}\|_{H^1(\R)}\\
        \le&C(\|\pi^\perp_{211}\hat\eta^\nu_2|_{t=\delta^\nu}\|_{H^1(\R)}+\|\hat\eta^\nu_2|_{t=0}-\hat\eta^\nu_2|_{t=\delta^\nu}\|_{H^1(\R)}+\|\eta'^\nu_2|_{t=0}-\eta^\nu_2|_{t=0}\|_{H^1(\R)}\\
        &+\|\eta'^\nu_1|_{t=0}-\eta^\nu_1|_{t=0}\|_{H^1(\R)}+\||\partial_s\hat u|_{t=0}||\hat\eta^\nu|_{t=\delta^\nu}|\|_{L^2(\R)}).
    \end{align*}
    Notice that the first, second, and forth terms above vanish because $\eta\in\Gamma_{1,\delta}$. Also, $\|\hat\eta^\nu_2|_{t=0}-\hat\eta^\nu_2|_{t=\delta^\nu}\|_{H^1(\R)}\rightarrow0$. So to prove $\|\pi_2^\perp\eta_2^\nu|_{t=0}\|_{H^1(\R)}\rightarrow0$, it remains to show that $\||\partial_s\hat u|_{t=0}||\hat\eta^\nu|_{t=\delta^\nu}|\|_{L^2(\R)}\rightarrow0$. According to Lemma \ref{Lem:3.1.3}, for all $T\in(0,\infty]$,
    \begin{align*}
        &\|\hat\eta^\nu|_{t=\delta^\nu}\|_{L^2([-T,T])}\\
        \le&C(\|\pi_2\eta'^\nu_2|_{t=\delta^\nu}\|_{L^2([-T,T])}+\|\pi_{211}^\perp\hat\eta^\nu_2|_{t=\delta^\nu}\|_{L^2([-T,T])}+\|(\eta'^\nu_1-\eta^\nu_1)|_{t=\delta^\nu}\|_{L^2([-T,T])})\\
        \le&C(\|\eta^\nu_2|_{t=\delta^\nu}\|_{L^2([-T,T])}+\|(\eta'^\nu_2-\eta^\nu_2)|_{t=0}\|_{L^2([-T,T])}+\|(\hat\eta^\nu|_{t=0}-\hat\eta^\nu|_{t=\delta^\nu}\|_{L^2([-T,T])}\\
        &+\|\pi_{211}^\perp\hat\eta^\nu_2|_{t=\delta^\nu}\|_{L^2([-T,T])}+\|(\eta'^\nu_1-\eta^\nu_1)|_{t=\delta^\nu}\|_{L^2([-T,T])}).
    \end{align*}
    Therefore $\|\hat\eta^\nu|_{t=\delta^\nu}\|_{L^2([-T,T])}\rightarrow0$. Combine this with the fact that
    \[
    \sup_{|s|\ge T}|\partial_s\hat u(s,0)|\rightarrow0\quad as\quad T\rightarrow\infty,
    \]
    we have
    \[
    \||\partial_s\hat u|_{t=0}||\hat\eta^\nu|_{t=\delta^\nu}|\|_{L^2(\R)}\rightarrow0.
    \]
    Therefore
    \[
    \|\pi_2^\perp\eta^\nu_2|_{t=0}\|_{H^1(\R)}\rightarrow0.
    \]
    The next step is to show that $\|\eta^\nu_2\|_{H^\frac32(\R\times[0,1])}\rightarrow0$.
    By the Fredholm theory of $D_{u_2}^*\oplus\pi^\perp_2$,
    \[
    \|\eta^\nu_2\|_{H^\frac32(\R\times[0,1])}\le C(\|D_{u_2}^*\eta^\nu_2\|_{H^1(\R\times[0,1])}+\|\pi^\perp_2\eta^\nu_2|_{t=0}\|_{H^1(\R)}+\|\eta^\nu_2\|_{H^0(\R\times[0,1])}).
    \]
    So we need to estimate the last term above.\\
    Define a cutoff function $h\in C_0^\infty(\R,[0,1])$ such that $h|_{\{|s|\leq T-1\}}\equiv 0$ and $h|_{\{|s|\geq T\}}\equiv 1$. Here we fix $T>1$ sufficiently large such that $u_{2}|_{supp(h)}=e_{x^\pm}(\vartheta_{2})$ for some smooth map $\vartheta_{2}:\{\pm s\geq (T-1)\} \to T_{x^\pm}M_{2}$. Then
    \begin{align*}
        \|\eta^\nu_2\|_{H^0(\R\times[0,1])}\le& \|h\eta^\nu_2\|_{H^0(\R\times[0,1])}+\|(1-h)\eta^\nu_2\|_{H^0(\R\times[0,1])}\\
        \le&C(\|h\eta^\nu_2\|_{H^0(\R\times[0,1])}+\|\eta^\nu_2\|_{H^0([-T,T]\times[0,1])}).
    \end{align*}
    To estimate $\|h\eta^\nu_2\|_{H^0(\R\times[0,1])}$, consider the operator $D_{x^\pm}=\partial_s-J(x^\pm)\partial_t$. Then \cite[Lemma~3.9, Proposition~3.14]{robbin1995spectral} implies the Fredholm property and bijectivity of $D_{x^\pm}$, as well as the following inequality
    \[
    \| \xi \|_{H^1(\R\times[0,1])}\leq C \Vert D_{x^\pm} \xi \Vert_{H^0(\R\times[0,1])}.
    \]
    In order to apply this estimate to $\eta_{2}^\nu$ we first find an extension $\zeta\in H^1(\R\times[0,1])$ of $\zeta|_{t=0}=\pi_{2}^\perp\eta^\nu_{2}|_{t=0}$ such that $\|\zeta\|_{H^1}\leq C\|\pi_{2}^\perp\eta^\nu_{2}|_{t=0}\|_{H^{1/2}}$.

Then we can apply the estimate to 
$\eta:=\Phi_{x^\pm}(\vartheta_{2})^{-1}\bigl( h(\eta^\nu_{2}-\zeta)\bigr)$, 
where $\Phi_{x^\pm}(\vartheta_{2})$ denotes parallel transport 
along the path 
$[0,1]\ni\tau\mapsto e_{x^\pm}(\tau\vartheta_{2})$.
We obtain
\begin{align*}
&\Vert h \eta^\nu_{2} \Vert_{H^1(\R\times[0,1])}  \\
\leq& C (\Vert \eta \Vert_{H^1(\R\times[0,1])} 
+ \Vert h \zeta \Vert_{H^1(\R\times[0,1])}) \\
\leq& C \bigl( 
\Vert \bigl( D_{x^\pm} - D_{u_2}^*\circ\Phi_{x^\pm}(\vartheta_{2}) \bigr) 
\eta \Vert_{H^0(\R\times[0,1])} 
+\Vert D_{u_2}^* (h\eta^\nu_{2}) \Vert_{H^0(\R\times[0,1])} 
+ \Vert h \zeta \Vert_{H^1(\R\times[0,1])} \bigr) \\
\le&C(\Vert \bigl( D_{x^\pm} - D_{u_2}^*\circ\Phi_{x^\pm}(\vartheta_{2})\bigr)\bigr|_{\{|s|>T-1\}}\Vert \cdot
\| h( \eta^\nu_{2} - \zeta) \Vert_{H^1(\R\times[0,1])}+\Vert D_{u_2}^* (h\eta^\nu_{2}) \Vert_{H^0(\R\times[0,1])}\\
&+\Vert h \zeta \Vert_{H^1(\R\times[0,1])})\\
\leq &C \bigl( 
\Vert \bigl( D_{x^\pm} - D_{u_2}^*\circ\Phi_{x^\pm}(\vartheta_{2}) 
\bigr)\bigr|_{\{|s|>T-1\}}\Vert \cdot
\| h \eta^\nu_{2} \Vert_{H^1(\R\times[0,1])} 
+\Vert D_{u_2}^*\eta^\nu_{2} \Vert_{H^0(\R\times[0,1])} \\
&+\Vert h \zeta \Vert_{H^1(\R\times[0,1])} \bigr)\\
\leq &C \bigl( 
\Vert \bigl( D_{x^\pm} - D_{u_2}^*\circ\Phi_{x^\pm}(\vartheta_{2}) 
\bigr)\bigr|_{\{|s|>T-1\}}\Vert \cdot
\| h \eta^\nu_{2} \Vert_{H^1(\R\times[0,1])} 
+\Vert D_{u_2}^*\eta^\nu_{2} \Vert_{H^0(\R\times[0,1])} \\
&+\Vert \eta^\nu_{2} \Vert_{H^0([-T,T]\times[0,1])} 
+ \|\pi_{2}^\perp\eta^\nu_{2}|_{t=0}\|_{H^{1/2}(\R)} \bigr) .
\end{align*}
Here the difference of the operators goes to zero for $T\to\infty$
since $u_{2}|_{\{|s|\geq T-1\}}\to x^\pm$ with all derivatives.  Thus for sufficiently large $T>0$ 
\begin{align*}
    &\Vert h \eta^\nu_{2} \Vert_{H^1(\R\times[0,1])}  \\
    \leq & \frac12\| h \eta^\nu_{2} \Vert_{H^1(\R\times[0,1])}+C\bigl( \Vert D_{u_2}^*\eta^\nu_{2}\Vert_{H^0(\R\times[0,1])}+\Vert \eta^\nu_{2} \Vert_{H^0([-T,T]\times[0,1])}+\|\pi_{2}^\perp\eta^\nu_{2}|_{t=0}\|_{H^{1/2}(\R)} \bigr) .
\end{align*}
Therefore
\[
\Vert h \eta^\nu_{2} \Vert_{H^1(\R\times[0,1])}\leq C\bigl( \Vert D_{u_2}^*\eta^\nu_{2}\Vert_{H^0(\R\times[0,1])}+\Vert \eta^\nu_{2} \Vert_{H^0([-T,T]\times[0,1])}+\|\pi_{2}^\perp\eta^\nu_{2}|_{t=0}\|_{H^{1/2}(\R)} \bigr) .
\]
After all this 
\[
\|\eta^\nu_2\|_{H^\frac32(\R\times[0,1])}\le C(\|D_{u_2}^*\eta^\nu_2\|_{H^1(\R\times[0,1])}+\|\pi^\perp_2\eta^\nu_2|_{t=0}\|_{H^1(\R)}+\|\eta^\nu_2\|_{H^0([-T,T]\times[0,1])}).
\]
    According to the Sobolev trace theorem, since $\|\eta^\nu_2\|_{H^\frac32(\R\times[0,1])}\rightarrow0$,
    \[
    \|\eta^\nu_2|_{t=0}\|_{H^1(\R)}+\|\eta^\nu_2|_{t=1}\|_{H^1(\R)}\rightarrow0
    \]
    By Lemma \ref{Lem:3.1.3},
    \begin{align*}
        &\|\hat\eta^\nu|_{t=\delta^\nu}\|_{H^1(\R)}\\
        \le&C(\|\pi_2\eta'^\nu|_{t=\delta^\nu}\|_{H^1(\R)}+\|\pi^\perp_{211}\hat\eta^\nu_2|_{t=\delta^\nu}\|_{H^1(\R)}+\|(\eta'^\nu_1-\eta^\nu_1)|_{t=\delta^\nu}\|_{H^1(\R)})\\
        \le&C(\|\hat\eta^\nu|_{t=0}\|_{H^1(\R)}+\|(\eta'^\nu_2-\eta^\nu_2)|_{t=0}\|_{H^1(\R)}+\|\hat\eta^\nu|_{t=0}-\hat\eta^\nu|_{t=\delta^\nu}\|_{H^1(\R)}\\
        &+\|\pi^\perp_{211}\hat\eta^\nu|_{t=\delta^\nu}\|_{H^1(\R)}+\|(\eta'^\nu_1-\eta^\nu_1)|_{t=0}\|_{H^1(\R)})
    \end{align*}
    \end{proof}

According to the definition of $\|\cdot\|_{\Omega_{1,\delta}}$
\begin{align*}
    &(1+c_2^{-1})\|(D^\delta)^*\eta\|_{\Omega_{1,\delta}}\\
    \ge &\frac12\|(D^\delta)^*\eta\|_{H^1_{1,\delta}}+\frac12\|(D^\delta)^*\eta\|_{L^4_{1,\delta}}+c_2^{-1}\|(D^\delta)^*\eta\|_{H^1(\R\times[0,1]))}\\
    \ge &\frac12\|(D^\delta)^*\eta\|_{H^1_{1,\delta}}+\frac12\|(D^\delta)^*\eta\|_{L^4_{1,\delta}}+(\frac12+\frac12)(\|(\eta_2',\eta_1,\eta_1')\|_{H^1(\R)}+\|\eta_2|_{t=1}\|_{H^1(\R)}+\|\eta\|_{H^0_{1,\delta}})\\
    -&c_2^{-1}\sqrt{\delta}\|\nabla_t(\eta_2',\eta_1,\eta_1')\|_{H^1(\R\times[0,\delta])}\quad(\text{Apply}\,(\ref{ineq:3}))\\
    \ge &\frac12c_1\|\eta\|_{H^2_{1,\delta}}+\frac12c_1\|\nabla\eta\|_{L^4_{1,\delta}}-c_2^{-1}\sqrt{\delta}\|\nabla_t(\eta_2',\eta_1,\eta_1')\|_{H^1(\R\times[0,\delta])}\quad(\text{Apply}\,(\ref{ineq:4}))\\
    =&\frac12c_1\|\eta\|_{\Gamma_{1,\delta}}-c_2^{-1}\sqrt{\delta}\|\nabla_t(\eta_2',\eta_1,\eta_1')\|_{H^1(\R\times[0,\delta])}\quad(\text{Let}\,\delta<\frac{c_1^2c_2^2}{16})\\
    \ge &\frac14c_1\|\eta\|_{\Gamma_{1,\delta}}.
\end{align*}
Similar estimates shows that for all $\eta\in\Gamma_{1,\delta}\cap K_0$
\begin{equation}\label{ineq:bddinv}
    \|D^\delta\eta\|_{\Omega_{1,\delta}}\ge\frac{c_1c_2}{4(c_2+1)}\|\eta\|_{\Gamma_{1,\delta}}.
\end{equation}
The above inequality
\[
(1+c_2^{-1})\|(D^\delta)^*\eta\|_{\Omega_{1,\delta}}\ge \frac14c_1\|\eta\|_{\Gamma_{1,\delta}}
\]
implies that $(D^\delta)^*$ is injective and hence $D^\delta$ is surjective. \textbf{It remains to show that the right inverse of $D^\delta$ is bounded}. If this is true, after applying McDuff and Salamon's theorem (Theorem (\ref{thm:mcdu})), we get the holomorphic representation needed.

\begin{cla}
    The right inverse of $D^\delta$ is bounded.
\end{cla}

\begin{proof}
    First notice that by the definition of $\Gamma_{1,\delta}$, $K_0\subset\Gamma_{1,\delta}$. The key point is that $D^\delta|_{K_0}$ is surjective. To prove this, we need to calculate the index of $D^\delta$. Using the correspondence given by Equations (\ref{equ:strcor1}) and (\ref{equ:strcor2}), as well as the relation between the index of Cauchy-Riemann operator and the Maslov index, we only need to count the Maslov index of $(u_1,u_2)$. Here $u_1$ is the $F_1$ component of the lift of $u_2|_{t=0}$ to $L_{12}\subset F_1\times F_2$, then apply the constant extension along the $[0,\delta]$ direction. Therefore the Maslov index of $(u_1,u_2)$ is
    \[
    \mathrm{Mas}(u_1,u_2)=\mathrm{Mas}(u_1)+\mathrm{Mas}(u_2)=\mathrm{Mas}(u_2)=1.
    \]
    As a result
    \[
    \mathrm{Ind}D^\delta=1.
    \]
    Combine this with the fact that $D^\delta$ is surjective, one has 
    \[
    \ker D^\delta=1.
    \]
    Because $D^\delta(K_0)\supset D^\delta(\ker (D^\delta)^\perp)$, then $D^\delta|_{K_0}$ is surjective. So $D^\delta$ has a right inverse
    \[
    (D^\delta)^{-1}:\Omega_{1,\delta}\rightarrow K_0\subset\Gamma_{1,\delta}.
    \]
   This is a bounded operator by Equation (\ref{ineq:bddinv}).
\end{proof}
\noindent Thus this ends the proof of the main theorem.
\end{proof}

\section{A potential example of figure-eight bubbling}\label{sec:5}
In \cite{wehrheim2009floer} Wehrheim and Woodward proved that for embedded monotone Lagrangians with restrictions on the Maslov index, then
\[
\mathrm{HF}(L_1,F,L_2;F_1,F_2)\cong\mathrm{HF}(L_1\circ F,L_2;F_2).
\]
In general this isomorphism does not hold because there are figure-eight bubblings when performing the strip shrinking method. Wehrheim and Woodward conjectured that the figure-eight bubblings obstruct the isomorphism. Bottman and Wehrheim \cite{bottman2018gromov} conjectured that these bounding cochains define the twisted Lagrangian Floer homology and produce the isomorphism. The purpose of this section is to give a potential example for Bottman and Wehrheim's conjecture.

\begin{ex}
    The Lagrangian correspondence $(g_1,g_2):F\looparrowright F_1\times F_2$ is given in Figure \ref{fig.exp}. The horizontal map is the 0-Dehn twist along the middle circles. The both vertical maps are foldings along the middle black circles. The left vertical map is $g_1$, and $g_2$ is the composition of the horizontal and the right vertical map. There is not any bigon in the left bottom. The boundary map $\mu_1$ of the Lagrangian Floer complex for the right bottom picture is given by counting the number of bigons shown in the following picture. 
    \begin{figure}[H]
    \centering 
    \includegraphics[width=0.5\textwidth]{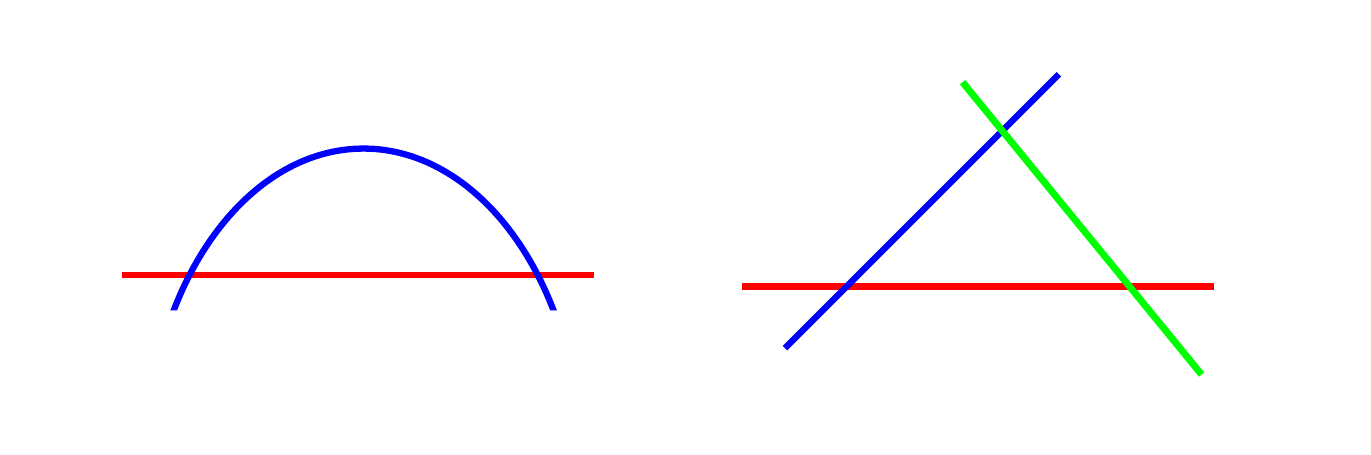}
    \caption{The boundary map $\mu_1$ is given by counting bigon as on the left and the right one corresponds to $\mu_2$.}
    \end{figure}    
    \noindent So there is a bigon in the right bottom picture. According to the main theorem of this paper, there is a holomorphic strip connecting two generators of $\CF^Q(L_1,F,L_2;F_1,F_2)$. This shows that there must be bubblings corresponding when shrinking the strip between $L_1$ and $F$ by \cite{bottman2018gromov}. 
    
    The generator $b$ in Figure \ref{fig.exp} is the bounding cochain. In fact, $\mu_i(\ldots,b,b,b,\ldots)=0, i\ge2$ and $\mu_0$ is defined as $b$. Therefore $\sum_{i\ge0}\mu_i(\ldots,b,\ldots)=\mu_0+\mu_1(b)=2b=0$ in $\Z_2$ coefficient. The twisted boundary map is defined as $\mu_1^b(\cdot):=\sum_{i\ge1}\mu_i({\ldots,b,\cdot,b,\ldots})$. One can show $(\mu_1^b)^2=0$ and $\mu_1^b(\cdot)=\mu_1(\cdot)+\mu_2(b,\cdot)+\mu_2(\cdot,b)$. By counting bigons and triangles in the large torus as blow, we have $\mu_1^b(\cdot)=0$. As a result
    \[
    \mathrm{HF}^b(L_1\circ F,L_2; F_2)\cong\mathrm{HF}(L_1,F\circ L_2;F_1)\cong
    Z_2\oplus\Z_2.
    \]

\begin{figure}[H]
\centering 
\includegraphics[width=1\textwidth]{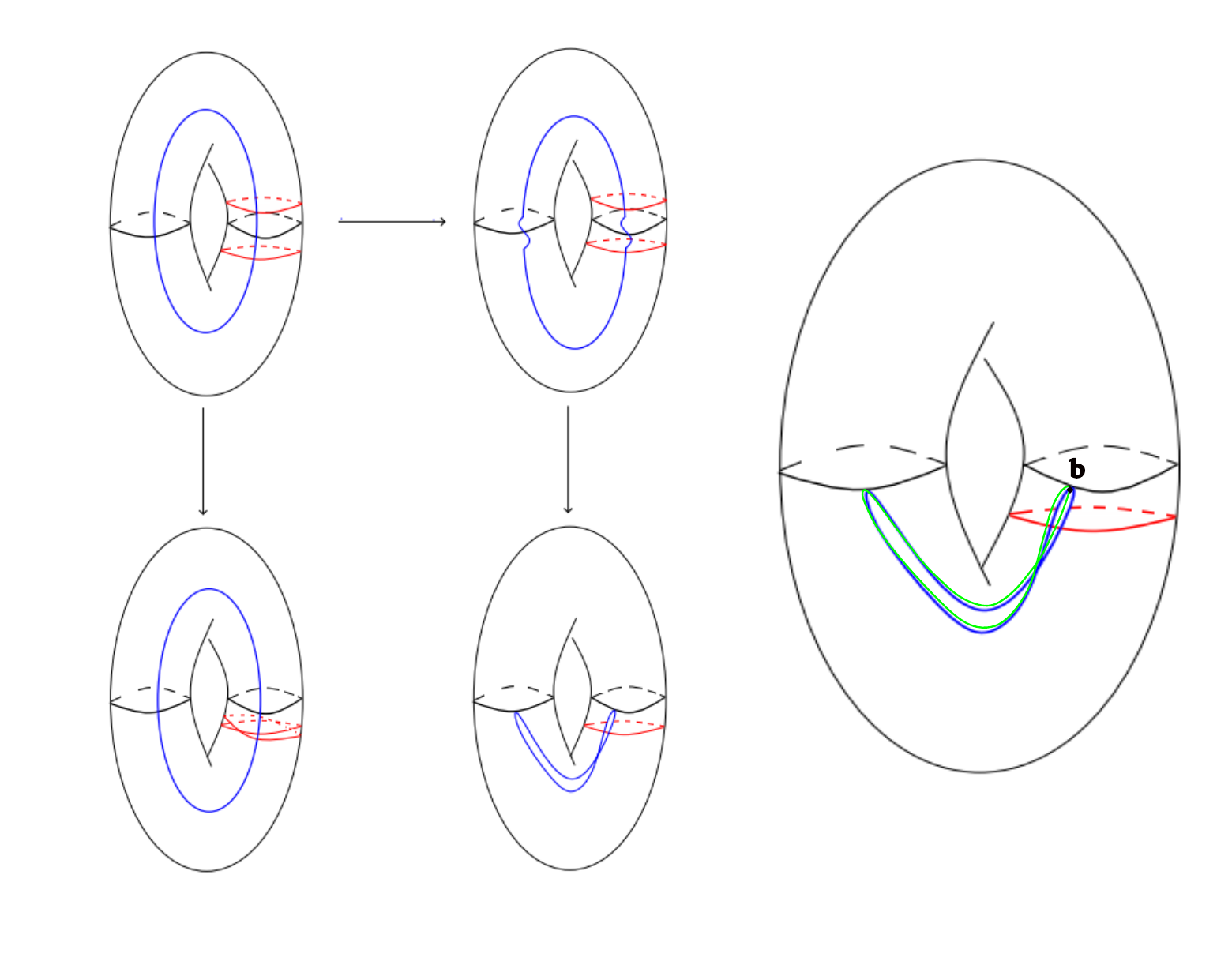}
\caption{The red curve is $L_1$ and the blue one is $L_2$. The green curve is the Hamiltonian perturbation of the blue curve.}
\label{fig.exp}
\end{figure}
\end{ex}

\begin{rmk}
    The reason that the above is only a potential example because the correspondence between bounding cochains and figure-eight bubblings must be functorial. This means that to solve Bottman and Wehrheim’s conjecture one has to figure out an universal way to construct the bounding cochains.
\end{rmk}

\bibliographystyle{plain}
\bibliography{references}

mailto:zhangzuyi1993@hotmail.com

\end{document}